
\documentclass{article}%
\usepackage{amsmath}
\usepackage{amsfonts}
\usepackage{amssymb}
\usepackage{graphicx}%
\setcounter{MaxMatrixCols}{30}
\providecommand{\U}[1]{\protect\rule{.1in}{.1in}}
\setlength{\textheight}{20.7cm}
\newtheorem{theorem}{Theorem}

\newtheorem{corollary}[theorem]{Corollary}

\newtheorem{definition}[theorem]{Definition}
\newtheorem{example}[theorem]{Example}

\newtheorem{lemma}[theorem]{Lemma}

\newtheorem{problem}[theorem]{Problem}
\newtheorem{proposition}[theorem]{Proposition}

\newenvironment{proof}[1][Proof]{\noindent\textbf{#1.} }{\ \rule{0.5em}{0.5em}}

\def\U{{\mathbb U}}

\def\1{\textcircled{1}}
\def\2{\textcircled{2}}

\begin{document}

\title{On PBZ$^{\ast}$ --lattices}
\author{Roberto Giuntini$^{1}$, Claudia Mure\c{s}an$^{2}$, Francesco Paoli$^{1}$
\and $^{1}$Dept. of Pedagogy, Psychology, Philosophy, University of Cagliari
\and $^{2}$Faculty of Mathematics and Computer Science,
\and University of Bucharest}
\maketitle

\begin{abstract}
We continue our investigation of paraorthomodular BZ*-lattices (PBZ$^{\ast}$
--lattices), started in\ \cite{GLP1+, PBZ2, rgcmfp, pbzsums, pbz5}. We shed
further light on the structure of the subvariety lattice of the variety
$\mathbb{PBZL}^{\mathbb{\ast}}$ of PBZ$^{\ast}$ --lattices; in particular, we
provide axiomatic bases for some of its members. Further, we show that some
distributive subvarieties of $\mathbb{PBZL}^{\mathbb{\ast}}$ are
term-equivalent to well-known varieties of expanded Kleene lattices or of
nonclassical modal algebras. By so doing, we somehow help the reader to locate
PBZ$^{\ast}$ --lattices on the atlas of algebraic structures for nonclassical logics.

\end{abstract}

\section{Introduction}

One of the core topics within the impressive \emph{corpus} of Mohammad
Ardeshir's contributions to mathematical logic is the algebraic semantics of
nonclassical logics. In particular, Ardeshir and his collaborators intensively
investigated the relationships between Visser's basic propositional calculus
\cite{Viss} and its algebraic counterpart, \emph{basic algebras},
generalisations of Heyting algebras where only the left-to-right direction of
the residuation equivalence $x\wedge y\leq z\Longleftrightarrow x\leq
y\rightarrow z$ is retained \cite{Ard1, Ard2, Ard3}. Also, in a basic algebra
$\mathbf{A}$ there may be $a\in A$ such that $1\rightarrow a\neq a$.
Crucially, the introduction of these structures is not motivated by
abstraction \emph{per se}: Ardeshir argues that basic algebras can contribute
to a deeper understanding of constructive mathematics, whence they can have a
paramount\emph{ foundational} interest.

The approach that led to the introduction of paraorthomodular BZ*-lattices
(\emph{PBZ}$^{\ast}$\emph{ --lattices}) \cite{GLP1+, PBZ2, rgcmfp, pbzsums,
pbz5} is similar. The key motivation for this particular generalisation of
orthomodular lattices, in fact, comes from the foundations of quantum
mechanics. Consider the structure%
\[
\mathbf{E}\left(  \mathbf{H}\right)  =\left(  \mathcal{E}\left(
\mathbf{H}\right)  ,\wedge_{s},\vee_{s},^{\prime},^{\sim},\mathbb{O}%
,\mathbb{I}\right)  \text{,}%
\]
where:

\begin{itemize}
\item $\mathcal{E}\left(  \mathbf{H}\right)  $ is the set of all effects of a
given complex separable Hilbert space $\mathbf{H}$, i.e., positive linear
operators of $\mathbf{H}$ that are bounded by the identity operator
$\mathbb{I}$;

\item $\wedge_{s}$ and $\vee_{s}$ are the meet and the join, respectively, of
the \emph{spectral ordering} $\leq_{s}$ so defined for all $E,F\in
\mathcal{E}\left(  \mathbf{H}\right)  $:
\[
E\leq_{s}F\,\,\,\text{iff}\,\,\,\forall\lambda\in\mathbb{R}:\,\,M^{F}%
(\lambda)\leq M^{E}(\lambda),
\]
where for any effect $E$, $M^{E}$ is the unique spectral family \cite[Ch.
7]{Kr} such that $E=\int_{-\infty}^{\infty}\lambda\,dM^{E}(\lambda)$ (the
integral is here meant in the sense of norm-converging Riemann-Stieltjes sums
\cite[Ch. 1]{Strocco});

\item $\mathbb{O}$ and $\mathbb{I}$ are the null and identity operators, respectively;

\item $E^{\prime}=\mathbb{I}-E$ and $E^{\sim}=P_{\ker\left(  E\right)  }$ (the
projection onto the kernel of $E$).
\end{itemize}

The operations in $\mathbf{E}\left(  \mathbf{H}\right)  $ are well-defined.
The spectral ordering is indeed a lattice ordering \cite{Ols, deG} that
coincides with the usual ordering of effects induced via the trace functional
when both orderings are restricted to the set of projection operators of the
same Hilbert space.

A {PBZ$^{\ast}$ --lattice can be viewed as an abstraction from this }concrete
physical model, much in the same way as an orthomodular lattice can be viewed
as an abstraction from a certain structure of projection operators in a
complex separable Hilbert space. The faithfulness of {PBZ$^{\ast}$ --lattice}s
to the physical model whence they stem is further underscored by the fact that
they reproduce at an abstract level the \textquotedblleft
collapse\textquotedblright\ of several notions of \emph{sharp physical
property} that can be observed in $\mathbf{E}\left(  \mathbf{H}\right)  $.

Referring the reader to \cite{GLP1+} for a more detailed discussion of the
previous issues, we now summarise the discourse of the present paper. In
Section \ref{bari} we collect some preliminaries, with the twofold aim of
fixing the notation to be used throughout the article and of making the
article itself sufficiently self-contained --- although we will occasionally
need to refer the reader to results included in the previous papers on the
subject. In Section \ref{lecce} we zoom in on some subvarieties of the variety
$\mathbb{PBZL}^{\mathbb{\ast}}$ of PBZ$^{\ast}$ --lattices. First, we
axiomatise the subvariety of $\mathbb{PBZL}^{\mathbb{\ast}}$ generated by a
particular algebra whose role in the context of $\mathbb{PBZL}^{\mathbb{\ast}%
}$ is analogous to the role of the benzene ring in the context of
ortholattices. Next, we prove that the subvariety of $\mathbb{PBZL}%
^{\mathbb{\ast}}$ generated by the (unique PBZ$^{\ast}$ --lattice over the)
$4$-element Kleene chain is the unique antiorthomodular cover of the variety
generated by the (unique PBZ$^{\ast}$ --lattice over the) $3$-element Kleene
chain. Finally, we put to good use the construction of subdirect products of
varieties of PBZ$^{\ast}$ --lattices, employing them to characterise some
joins of subvarieties of PBZ$^{\ast}$ --lattices. Section \ref{thedis} is
devoted to term-equivalence results that establish connections between
\emph{distributive} varieties of PBZ$^{\ast}$ --lattices and some known
expansions of Kleene lattices, on the one hand, and nonclassical modal
algebras --- i.e., modal algebras whose nonmodal reducts are generic De Morgan
algebras rather than Boolean algebras --- on the other. We hope that these
equivalences can help readers to make out the whereabouts of PBZ$^{\ast}$
--lattices in the vast landscape of algebraic structures for nonclassical
logic, a territory whose exploration has been decisively aided by the research
work of Mohammad Ardeshir.

\section{Preliminaries\label{bari}}

For further information on the notions recalled in this section, we refer the
reader to \cite{GLP1+,PBZ2,rgcmfp,pbzsums,pbz5}.

We denote by ${\mathbb{N}}$ the set of the natural numbers and by
${\mathbb{N}}^{\ast}={\mathbb{N}}\setminus\{0\}$. If $\mathbf{A}$ is an
algebra, then $A$ will denote its universe. We call \emph{trivial algebras}
the singleton algebras. For any $n\in{\mathbb{N}}^{\ast}$, $\mathbf{D}_{n}$
will denote the $n$--element chain, as well as any bounded lattice-ordered
structure having this chain as a bounded lattice reduct. For any lattice
$\mathbf{L}$, we denote by $\mathbf{L}^{d}$ the dual of $\mathbf{L}$. For any
bounded lattices $\mathbf{L}$ and $\mathbf{M}$, we denote by $\mathbf{L}%
\oplus\mathbf{M}$ the ordinal sum of $\mathbf{L}$ with $\mathbf{M}$, obtained
by glueing together the top element of $\mathbf{L}$ and the bottom element of
$\mathbf{M}$, thus stacking $\mathbf{M}$ on top of $\mathbf{L}$, and by
$L\oplus M$ the universe of the bounded lattice $\mathbf{L}\oplus\mathbf{M}$;
clearly, the ordinal sum of bounded lattices is associative.

Let ${\mathbb{V}}$ be a variety of algebras of similarity type $\tau$ and
${\mathbb{C}}$ a class of algebras with $\tau$--reducts. We denote by
${\mathrm{I}}_{{\mathbb{V}}}({\mathbb{C}})$, ${\mathrm{H}}_{{\mathbb{V}}%
}({\mathbb{C}})$, ${\mathrm{S}}_{{\mathbb{V}}}({\mathbb{C}})$ and
${\mathrm{P}}_{{\mathbb{V}}}({\mathbb{C}})$ the classes of the isomorphic
images, homomorphic images, subalgebras and direct products of $\tau$--reducts
of members of ${\mathbb{C}}$, respectively, and by $V_{{\mathbb{V}}%
}({\mathbb{C}})={\mathrm{H}}_{{\mathbb{V}}}{\mathrm{S}}_{{\mathbb{V}}%
}{\mathrm{P}}_{{\mathbb{V}}}({\mathbb{C}})$ the subvariety of ${\mathbb{V}}$
generated by the $\tau$--reducts of the members of ${\mathbb{C}}$. For any
class operator $\mathrm{O}$ and any $\mathbf{A}\in{\mathbb{C}}$, the notation
$\mathrm{O}_{{\mathbb{V}}}(\{\mathbf{A}\})$ will be streamlined to
$\mathrm{O}_{{\mathbb{V}}}(\mathbf{A})$. If $\mathbf{A}$ is an algebra having
a $\tau$--reduct, $n\in{\mathbb{N}}$ and $\kappa_{1},\ldots,\kappa_{n}$ are
constants over $\tau$, then we denote by $\mathrm{Con}_{{\mathbb{V}}%
}(\mathbf{A})$ the complete lattice of the congruences of the $\tau$--reduct
of $\mathbf{A}$, as well as the set reduct of this congruence lattice, and by
$\mathrm{Con}_{{\mathbb{V}}\kappa_{1},\ldots,\kappa_{n}}(\mathbf{A})$ the
complete sublattice of $\mathrm{Con}_{{\mathbb{V}}}(\mathbf{A})$ consisting of
the congruences with singleton classes of $\kappa_{1}^{\mathbf{A}}%
,\ldots,\kappa_{n}^{\mathbf{A}}$, as well as its set reduct. If ${\mathbb{V}}$
is the variety of lattices or that of bounded lattices, then the subscript
$_{{\mathbb{V}}}$ will be eliminated from the previous notations. If
${\mathbb{C}}\subseteq{\mathbb{V}}$, then we denote by $Si({\mathbb{C}})$ the
class of the members of ${\mathbb{C}}$ which are subdirectly irreducible in
${\mathbb{V}}$. The lattice of subvarieties of ${\mathbb{V}}$ and its set
reduct will be denoted by $\mathrm{Subvar}({\mathbb{V}})$.

An \emph{involution lattice} (in brief, \emph{I--lattice}) is an algebra
$\mathbf{L}=(L,\wedge,\vee,\cdot^{\prime})$ of type $(2,2,1)$ such that
$(L,\wedge,\vee)$ is a lattice and $\cdot^{\prime}:L\rightarrow L$ is an
order--reversing operation that satisfies $a^{\prime\prime}=a$ for all $a\in
L$. This makes $\cdot^{\prime}$ a dual lattice automorphism of $\mathbf{L}$,
called \emph{involution}.

A \emph{bounded involution lattice} (in brief, \emph{BI--lattice}) is an
algebra $\mathbf{L}=(L,\wedge,\vee,$\linebreak$\cdot^{\prime},0,1)$ of type
$(2,2,1,0,0)$ such that $(L,\wedge,\vee,0,1)$ is a bounded lattice and
$(L,\wedge,\vee,\cdot^{\prime})$ is an involution lattice. A distributive
bounded involution lattice is called a \emph{De Morgan algebra}.

For any BI--lattice $\mathbf{L}$, we denote by $S(\mathbf{L})$ the set of the
\emph{sharp elements} of $\mathbf{L}$, that is: $S(\mathbf{L})=\{x\in L:x\vee
x^{\prime}=1\}$. A BI--lattice $\mathbf{L}$ is called an \emph{ortholattice}
iff all its elements are sharp, and it is called an \emph{orthomodular
lattice} iff, for all $a,b\in L$, $a\leq b$ implies $b=(b\wedge a^{\prime
})\vee a$.

A \emph{pseudo--Kleene algebra} is a BI--lattice $\mathbf{L}$ that satisfies
$a\wedge a^{\prime}\leq b\vee b^{\prime}$ for all $a,b\in L$. The involution
of a pseudo--Kleene algebra is called \emph{Kleene complement}. Distributive
pseudo--Kleene algebras are called \emph{Kleene algebras} or \emph{Kleene
lattices}.

Clearly, for any bounded lattice $\mathbf{L}$ and any BI--lattice $\mathbf{K}%
$, if $\mathbf{K}_{l}$ is the bounded lattice reduct of $\mathbf{K}$, then the
bounded lattice $\mathbf{L}\oplus\mathbf{K}_{l}\oplus\mathbf{L}^{d}$ becomes a
BI--lattice with the involution that restricts to the involution of
$\mathbf{K}$ on $K$, to a dual lattice isomorphism from $\mathbf{L}$ to
$\mathbf{L}^{d}$ on $L$ and to the inverse of this lattice isomorphism on
$L^{d}$. This BI--lattice, which we denote by $\mathbf{L}\oplus\mathbf{K}%
\oplus\mathbf{L}^{d}$, is a pseudo--Kleene algebra iff $\mathbf{K}$ is a
pseudo--Kleene algebra.

We denote by $\mathbb{BA} $, $\mathbb{OML} $, $\mathbb{OL} $, $\mathbb{KA} $,
$\mathbb{PKA} $, $\mathbb{BI} $ and ${\mathbb{I}} $ the varieties of Boolean
algebras, orthomodular lattices, ortholattices, Kleene algebras,
pseudo--Kleene algebras, BI--lattices and I--lattices, respectively. Note that
$\mathbb{BA} \subsetneq\mathbb{OML} \subsetneq\mathbb{OL} \subsetneq
\mathbb{PKA} \subsetneq\mathbb{BI} $ and $\mathbb{BA} \subsetneq\mathbb{KA}
\subsetneq\mathbb{PKA} $.

An algebra $\mathbf{A}$ having a BI--lattice reduct is said to be
\emph{paraorthomodular} iff, for all $a,b\in A$, if $a\leq b$ and $a^{\prime
}\wedge b=0$, then $a=b$. Note that orthomodular lattices are paraorthomodular
and that paraorthomodular ortholattices are orthomodular lattices.

A \emph{Brouwer--Zadeh lattice} (in brief, \emph{BZ--lattice}) is an algebra
$\mathbf{L}=(L,\wedge,\vee,\cdot^{\prime},$\linebreak$\cdot^{\sim},0,1)$ of
type $(2,2,1,1,0,0)$ such that $(L,\wedge,\vee,\cdot^{\prime},0,1)$ is a
pseudo--Kleene algebra and $\cdot^{\sim}:L\rightarrow L$ is an
order--reversing operation, called \emph{Brouwer complement}, that satisfies:
$a\wedge a^{\sim}=0$ and $a\leq a^{\sim\sim}=a^{\sim\prime}$ for all $a\in L$.
In any BZ--lattice $\mathbf{L}$, we denote by $\square a=a^{\prime\sim}$ and
by $\Diamond a=a^{\sim\sim}$ for all $a\in L$. Note that, in any BZ--lattice
$\mathbf{L}$, we have, for all $a,b\in L$: $a^{\sim\sim\sim}=a^{\sim}\leq
a^{\prime}$, $(a\vee b)^{\sim}=a^{\sim}\wedge b^{\sim}$ and $(a\wedge
b)^{\sim}\geq a^{\sim}\vee b^{\sim}$. The class of BZ-lattices is a variety,
hereafter denoted by $\mathbb{BZL}$.

We consider the following equations over $\mathbb{BZL}$, out of which
SDM\ (the \emph{Strong De Morgan} identity) clearly implies $(\ast)$, as well
as \mbox{\textup{SK}}, while \mbox{\textup{J0}}\ implies J2:

\begin{flushleft}%
\begin{tabular}
[c]{rl}%
$(\ast)$ & $(x\wedge x^{\prime})^{\sim}\approx x^{\sim}\vee x^{\prime\sim}$\\
\mbox{\textup{SDM}} & $(x\wedge y)^{\sim}\approx x^{\sim}\vee y^{\sim}$\\
\mbox{\textup{SK}} & $x\wedge\Diamond y\leq\square x\vee y$\\
\mbox{\textup{DIST}} & $x\wedge(y\vee z)\approx(x\wedge y)\vee(x\wedge z)$\\
\mbox{\textup{J0}} & $(x\wedge y^{\sim})\vee(x\wedge\Diamond y)\approx x$\\
$\textup{J}2$ & $(x\wedge(y\wedge y^{\prime})^{\sim})\vee(x\wedge
\Diamond(y\wedge y^{\prime}))\approx x$%
\end{tabular}

\end{flushleft}

A \emph{PBZ$^{\ast}$ --lattice} is a paraorthomodular BZ--lattice that
satisfies equation $(\ast)$. In any PBZ$^{\ast}$ --lattice $\mathbf{L}$,%
\[
S(\mathbf{L})=\{a^{\sim}:a\in L\}=\{a\in L:a^{\sim\sim}=a\}=\{a\in
L:a^{\prime}=a^{\sim}\}
\]
and $S(\mathbf{L})$ is the universe of the largest orthomodular subalgebra of
$\mathbf{L}$, that we denote by $\mathbf{S}(\mathbf{L})$.

We denote by $\mathbb{PBZL}^{\ast}$ the variety of PBZ$^{\ast}$ --lattices;
note that paraorthomodularity becomes an equational condition under the
$\mathbb{BZL}$ axioms and condition $(\ast)$. We also denote by $\mathbb{DIST}%
=\{\mathbf{L}\in\mathbb{PBZL}^{\ast}:\mathbf{L}\vDash\mbox{\textup{DIST}}\}$.
By the above, $\mathbb{OML}$ can be identified with the subvariety
$\{\mathbf{L}\in\mathbb{PBZL}^{\ast}:\mathbf{L}\vDash x^{\prime}\approx
x^{\sim}\}$ of $\mathbb{PBZL}^{\ast}$, by endowing each orthomodular lattice,
in particular every Boolean algebra, with a Brouwer complement equalling its
Kleene complement. With the same extended signature, $\mathbb{OL}$ becomes the
subvariety $\{\mathbf{L}\in\mathbb{BZL}:\mathbf{L}\vDash x^{\prime}\approx
x^{\sim}\}$ of $\mathbb{BZL}$.

A PBZ$^{\ast}$ --lattice $\mathbf{A}$ with no nontrivial sharp elements, that
is with $S(\mathbf{A})=\{0,1\}$, is called an \emph{antiortholattice}. A
PBZ$^{\ast}$ --lattice $\mathbf{A}$ is an antiortholattice iff it is endowed
with the following Brouwer complement, called the \emph{trivial Brouwer
complement}: $0^{\sim}=1$ and $a^{\sim}=0$ for all $a\in A\setminus\{0\}$.
Every paraorthomodular pseudo--Kleene algebra with no nontrivial sharp
elements becomes an antiortholattice when endowed with the trivial Brouwer
complement. In particular, any BZ--lattice with the $0$ meet--irreducible, and
thus any BZ--chain, is an antiortholattice. Moreover, BZ--lattices with the
$0$ meet--irreducible are exactly the antiortholattices that satisfy
\mbox{\textup{SDM}}. Also, if $\mathbf{L}$ is a nontrivial bounded lattice and
$\mathbf{K}$ is a pseudo--Kleene algebra, then the pseudo--Kleene algebra
$\mathbf{L}\oplus\mathbf{K}\oplus\mathbf{L}^{d}$, endowed with the trivial
Brouwer complement, becomes an antiortholattice, that we will also denote by
$\mathbf{L}\oplus\mathbf{K}\oplus\mathbf{L}^{d}$.

Antiortholattices form a proper universal class, denoted by $\mathbb{AOL}$.
Clearly, $\mathbb{AOL}\cup\mathbb{OML}\subsetneq\mathbb{PBZL}^{\ast}%
\subsetneq\mathbb{BZL}\supsetneq\mathbb{OL}$. Note, also, that $\mathbb{OML}%
\cap V_{\mathbb{BZL}}(\mathbb{AOL})=\mathbb{OML}\cap\mathbb{DIST}=\mathbb{BA}%
$, hence $\mathbb{DIST}\subsetneq V_{\mathbb{BZL}}(\mathbb{AOL})$. We denote
by $\mathbb{SDM}=\{\mathbf{L}\in\mathbb{PBZL}^{\ast}:\mathbf{L}\vDash
\mbox{\textup{SDM}}\}$ and by $\mathbb{SAOL}=\mathbb{SDM}\cap V_{\mathbb{BZL}%
}(\mathbb{AOL})$.

If $\mathbf{L}$ is a nontrivial bounded lattice and ${\mathbb{C}}$ is a class
of bounded lattices, BI--lattices or pseudo--Kleene algebras, then we denote
by $\mathbf{L}\oplus{\mathbb{C}}\oplus\mathbf{L}^{d}$ the following class of
bounded lattices, BI--lattices or antiortholattices:%
\[
\mathbf{L}\oplus{\mathbb{C}}\oplus\mathbf{L}^{d}=\{\mathbf{L}\oplus
\mathbf{A}\oplus\mathbf{L}^{d}:\mathbf{A}\in{\mathbb{C}}\}.
\]

\section{A Study of Some Subvarieties\label{lecce}}

Throughout this section, the results cited from \cite{pbz5} will be numbered
as in the third arXived version of this paper.

\subsection{The $\mathbf{F}_{8}$ Problem}

There is a long and time-honoured tradition that aims at characterising
subvarieties of varieties of ordered algebras in terms of \textquotedblleft
forbidden configurations\textquotedblright, harking back to Dedekind's
celebrated result to the effect that the distributive subvariety of the
variety of lattices is the one whose members do not contain as subalgebras
$\mathbf{M}_{3}$ or $\mathbf{N}_{5}$, while the modular subvariety is the one
whose members do not contain $\mathbf{N}_{5}$. Other important results in the
same vein appear in the theory of ortholattices. For example, the benzene ring
$\mathbf{B}_{6}$:%

\begin{center}\begin{picture}(43,90)(0,0)
\put(20,30){\circle*{3}}
\put(0,40){\circle*{3}}
\put(40,40){\circle*{3}}
\put(0,65){\circle*{3}}
\put(40,65){\circle*{3}}
\put(20,75){\circle*{3}}
\put(0,40){\line(0,1){25}}
\put(40,40){\line(0,1){25}}
\put(18,21){$0$}
\put(18,78){$1$}
\put(20,30){\line(-2,1){20}}
\put(20,30){\line(2,1){20}}
\put(20,75){\line(-2,-1){20}}
\put(20,75){\line(2,-1){20}}
\put(-4,33){$a$}
\put(42,32){$b$}
\put(-6,66){$b^{\prime }$}
\put(40,68){$a^{\prime }$}
\put(-35,75){${\bf B}_6:$}\end{picture}\end{center}\vspace*{-15pt}

\noindent is a forbidden configuration for the orthomodular subvariety of the
variety of ortholattices; more precisely,%
\[
\mathbb{OML}=\left\{  \mathbf{L}\in\mathbb{OL}:\mathbf{B}_{6}\notin%
{\mathrm{S}}_{{\mathbb{I}} }(\mathbf{L})\right\}  \text{.}%
\]
Consequently:

\begin{lemma}
$(\mathbb{OML},V_{\mathbb{BI}}(\mathbf{B}_{6}))$ is a splitting pair in
$\mathrm{Subvar}\left(  \mathbb{OL}\right)  $.\label{splitol}
\end{lemma}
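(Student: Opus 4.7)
The plan is to derive the lemma as a short corollary of the forbidden-configuration identity $\mathbb{OML}=\{\mathbf{L}\in\mathbb{OL}:\mathbf{B}_{6}\notin{\mathrm{S}}_{{\mathbb{I}}}(\mathbf{L})\}$ that has just been recalled, in which all the substantive work is already packaged. A brief preliminary I would include is that $V_{\mathbb{BI}}(\mathbf{B}_{6})$ really does sit inside $\mathrm{Subvar}(\mathbb{OL})$ (not merely inside $\mathrm{Subvar}(\mathbb{BI})$), which is immediate since $\mathbf{B}_{6}$ is itself an ortholattice and $\mathbb{OL}$ is cut out from $\mathbb{BI}$ by equations.

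Next I would dispose of the non-triviality half of the splitting condition, namely that $V_{\mathbb{BI}}(\mathbf{B}_{6})\not\subseteq\mathbb{OML}$. The generator $\mathbf{B}_{6}$ is itself a witness, since $a\leq b^{\prime}$ in $\mathbf{B}_{6}$ yet $(b^{\prime}\wedge a^{\prime})\vee a = 0\vee a = a\neq b^{\prime}$, violating orthomodularity.

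The heart of the argument is the dichotomy: for every $\mathbb{V}\in\mathrm{Subvar}(\mathbb{OL})$, either $\mathbb{V}\subseteq\mathbb{OML}$ or $V_{\mathbb{BI}}(\mathbf{B}_{6})\subseteq\mathbb{V}$. I would establish this by contraposition: if $\mathbb{V}\not\subseteq\mathbb{OML}$, fix any $\mathbf{L}\in\mathbb{V}\setminus\mathbb{OML}$. The displayed identity then delivers $\mathbf{B}_{6}\in{\mathrm{S}}_{{\mathbb{I}}}(\mathbf{L})$, so an isomorphic copy of $\mathbf{B}_{6}$ embeds into $\mathbf{L}$; closure of the variety $\mathbb{V}$ under isomorphic images of subalgebras forces $\mathbf{B}_{6}\in\mathbb{V}$, whence $V_{\mathbb{BI}}(\mathbf{B}_{6})\subseteq\mathbb{V}$. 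Combined with the preceding step, this realises $\mathrm{Subvar}(\mathbb{OL})$ as the disjoint union of ${\downarrow}\mathbb{OML}$ and ${\uparrow}V_{\mathbb{BI}}(\mathbf{B}_{6})$, which is the splitting-pair property.

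Honestly there is no serious obstacle here: once the forbidden-configuration theorem for $\mathbb{OML}$ is in hand, the argument is little more than bookkeeping. The only point to be mindful of is the convention for splitting pairs, namely that $\mathbb{OML}$ and $V_{\mathbb{BI}}(\mathbf{B}_{6})$ are incomparable in $\mathrm{Subvar}(\mathbb{OL})$ and play, respectively, the roles of the top of the lower block and the bottom of the upper block of the induced partition.
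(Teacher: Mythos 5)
Your argument is correct and is exactly the route the paper intends: the lemma is stated as an immediate consequence (``Consequently:'') of the forbidden-configuration identity $\mathbb{OML}=\{\mathbf{L}\in\mathbb{OL}:\mathbf{B}_{6}\notin{\mathrm{S}}_{{\mathbb{I}}}(\mathbf{L})\}$, and you have merely filled in the routine bookkeeping (non-orthomodularity of $\mathbf{B}_{6}$, and the dichotomy via closure of varieties under $\mathrm{I}$ and $\mathrm{S}$) that the paper leaves implicit.
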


In this subsection, we intend to give a first, limited application of this
method, by means of a forbidden configuration consisting of a
\textquotedblleft paraorthomodular analogue\textquotedblright\ of
$\mathbf{B}_{6}$: the antiortholattice $\mathbf{D}_{2}\oplus\mathbf{B}%
_{6}\oplus\mathbf{D}_{2}$, hereafter denoted by $\mathbf{F}_{8}$, along with
any of its reducts, for the sake of brevity:%

\begin{center}\begin{picture}(43,100)(0,0)
\put(20,10){\circle*{3}}
\put(20,30){\circle*{3}}
\put(0,40){\circle*{3}}
\put(40,40){\circle*{3}}
\put(0,65){\circle*{3}}
\put(40,65){\circle*{3}}
\put(20,75){\circle*{3}}
\put(20,95){\circle*{3}}
\put(20,10){\line(0,1){20}}
\put(20,95){\line(0,-1){20}}
\put(0,40){\line(0,1){25}}
\put(40,40){\line(0,1){25}}
\put(18,1){$0$}
\put(18,98){$1$}
\put(20,30){\line(-2,1){20}}
\put(20,30){\line(2,1){20}}
\put(20,75){\line(-2,-1){20}}
\put(20,75){\line(2,-1){20}}
\put(23,26){$c$}
\put(-4,33){$a$}
\put(42,32){$b$}
\put(-6,66){$b^{\prime }$}
\put(40,68){$a^{\prime }$}
\put(23,75){$c^{\prime }$}
\put(-80,85){${\bf F}_8={\bf D}_2\oplus {\bf B}_6\oplus {\bf D}_2$:}\end{picture}\end{center}

Since it has the $0$ meet--irreducible, the antiortholattice $\mathbf{F}_{8}$
satisfies SDM, thus $\mathbf{F}_{8}\in\mathbb{SAOL}$. The question arises
naturally as to which subvarieties ${\mathbb{V}}$ of $\mathbb{PBZL}^{\ast}$
are maximal with respect to the property that $\mathbf{F}_{8}\notin%
{\mathrm{S}}_{{\mathbb{I}} }({\mathbb{V}} )$, i.e. $\mathbf{F}_{8}%
\notin{\mathrm{S}}_{{\mathbb{I}} }(\mathbf{A})$ for any $\mathbf{A}%
\in{\mathbb{V}}$. This problem will be referred to as the \textquotedblleft%
$\mathbf{F}_{8}$ problem\textquotedblright. Although we will not give an
answer to this question, we provide a quasiequational characterisation of
paraorthomodular bounded involution lattices that do not contain
$\mathbf{F}_{8}$ as a bounded involution sublattice and we study the varieties
of PBZ$^{\ast}$ --lattices that contain the antiortholattice $\mathbf{F}_{8}$.

Clearly, for any $\mathbf{L},\mathbf{M}\in\mathbb{BI}$, we have:
$\mathbf{D}_{2}\oplus\mathbf{M}\oplus\mathbf{D}_{2}\in{\mathrm{S}%
}_{{\mathbb{I}}}(\mathbf{L})$ iff $\mathbf{D}_{2}\oplus\mathbf{M}%
\oplus\mathbf{D}_{2}\in{\mathrm{S}}_{\mathbb{BI}}(\mathbf{L})$. The
right-to-left direction is trivial, while, if $\mathbf{D}_{2}\oplus
\mathbf{M}\oplus\mathbf{D}_{2}\in{\mathrm{S}}_{{\mathbb{I}}}(\mathbf{L})$ and
$A=M\cup\{0,1\}$, then $\mathbf{D}_{2}\oplus\mathbf{M}\oplus\mathbf{D}%
_{2}\cong_{\mathbb{BI}}\mathbf{A}\in{\mathrm{S}}_{\mathbb{BI}}(\mathbf{L})$.
In particular, for any $\mathbf{A}\in\mathbb{BZL}$, we have that
$\mathbf{F}_{8}\in{\mathrm{S}}_{{\mathbb{I}}}(\mathbf{A})$ iff $\mathbf{F}%
_{8}\in{\mathrm{S}}_{\mathbb{BI}}(\mathbf{A})$; also, if $\mathbf{F}_{8}%
\in{\mathrm{S}}_{\mathbb{BZL}}(\mathbf{A})$, then $\mathbf{F}_{8}%
\in{\mathrm{S}}_{\mathbb{BI}}(\mathbf{A})$, while, if $\mathbf{A}$ is an
antiortholattice, then $\mathbf{F}_{8}\in{\mathrm{S}}_{\mathbb{BZL}%
}(\mathbf{A})$ iff $\mathbf{F}_{8}\in{\mathrm{S}}_{\mathbb{BI}}(\mathbf{A})$.

Observe what follows:

\begin{itemize}
\item no distributive PBZ$^{\ast}$ --lattice can contain $\mathbf{B}_{6}$ or
$\mathbf{F}_{8}$ as sublattices, in particular as sub-involution lattices;

\item since $\mathbf{B}_{6}$ is a sub-involution lattice of $\mathbf{F}_{8}$ and $\mathbf{B}_{6}$ is not a sub-involution lattice of any orthomodular lattice, no orthomodular lattice can contain $\mathbf{F}_{8}$ as a sub-involution lattice;

\item by the above, any subvariety ${\mathbb{V}}$ of $\mathbb{PBZL}^{\ast}$
such that ${\mathbb{V}}\subseteq\mathbb{DIST}\cup\mathbb{OML}$ satisfies
$\mathbf{F}_{8}\notin{\mathrm{S}}_{{\mathbb{I}}}({\mathbb{V}})$;

\item $\mathbf{F}_{8}\in\mathbb{SAOL}$, whence any subvariety ${\mathbb{V}}$
of $\mathbb{PBZL}^{\ast}$ such that $\mathbb{SAOL}\subseteq{\mathbb{V}}$
satisfies $\mathbf{F}_{8}\in{\mathrm{S}}_{{\mathbb{I}}}({\mathbb{V}})$.
\end{itemize}

Let us now consider the following quasiequations in the language of I--lattices:

\begin{flushleft}%
\begin{tabular}
[c]{ll}%
\mbox{\textcircled{\sc q}} & $x\leq y^{\prime}\ \&\ x^{\prime}\wedge
y^{\prime}\leq x\wedge y\Rightarrow x=y^{\prime}$\\
\mbox{\textcircled{\sc q}} $^{\prime}$ & $x^{\prime}\wedge(x^{\prime}\wedge
u)^{\prime}\leq x\wedge(x^{\prime}\wedge u)\Rightarrow u\leq x^{\prime}$%
\end{tabular}

\end{flushleft}

Note that \mbox{\textcircled{\sc q}}\ is equivalent to
\mbox{\textcircled{\sc q}} $^{\prime}$.

\begin{lemma}
If $\mathbf{A}\in{\mathbb{I}} $ and $a,b\in A$ are such that $a\leq b^{\prime
}$ and $a^{\prime}\wedge b^{\prime}\leq a\wedge b$, then $a\wedge a^{\prime
}=b\wedge b^{\prime}=a^{\prime}\wedge b^{\prime}=a\wedge b$.\label{posabc}
\end{lemma}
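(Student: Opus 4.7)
The plan is a short chase through lattice inequalities using only that $'$ is an order-reversing involution on the I--lattice $\mathbf{A}$; no Brouwer complement, pseudo--Kleene axiom, or paraorthomodularity is required, which matches the hypothesis $\mathbf{A}\in\mathbb{I}$.

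First, I would apply the involution to the assumption $a\leq b^{\prime}$ to obtain $b=b^{\prime\prime}\leq a^{\prime}$. Thus both $a\leq b^{\prime}$ and $b\leq a^{\prime}$ are in hand. Using $a\wedge b\leq a$ together with $b\leq a^{\prime}$, I get $a\wedge b\leq a\wedge a^{\prime}$; symmetrically, $a\wedge b\leq b$ together with $a\leq b^{\prime}$ gives $a\wedge b\leq b\wedge b^{\prime}$. For the upper half, $a\wedge a^{\prime}\leq a^{\prime}$ combined with $a\wedge a^{\prime}\leq a\leq b^{\prime}$ yields $a\wedge a^{\prime}\leq a^{\prime}\wedge b^{\prime}$, and the symmetric argument produces $b\wedge b^{\prime}\leq a^{\prime}\wedge b^{\prime}$. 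So I arrive at
\[
a\wedge b\ \leq\ a\wedge a^{\prime}\ \leq\ a^{\prime}\wedge b^{\prime}\quad\text{and}\quad a\wedge b\ \leq\ b\wedge b^{\prime}\ \leq\ a^{\prime}\wedge b^{\prime}.
\]

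Finally, the second hypothesis $a^{\prime}\wedge b^{\prime}\leq a\wedge b$ closes both chains into cycles, forcing every inequality to be an equality; this yields the desired conclusion $a\wedge a^{\prime}=b\wedge b^{\prime}=a^{\prime}\wedge b^{\prime}=a\wedge b$. There is really no obstacle in the argument; the only minor point to watch is keeping the direction of inequalities straight when the involution is applied to $a\leq b^{\prime}$.
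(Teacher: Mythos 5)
Your proof is correct and takes essentially the same elementary route as the paper: both arguments use only that the involution reverses order (to get $b\leq a^{\prime}$ from $a\leq b^{\prime}$) and then derive the equalities from the hypothesis $a^{\prime}\wedge b^{\prime}\leq a\wedge b$; the paper computes each meet directly via absorption ($a\wedge a^{\prime}=a\wedge b^{\prime}\wedge a^{\prime}=a\wedge(a^{\prime}\wedge b^{\prime})$, etc.), whereas you close two chains of inequalities into cycles, which is a purely cosmetic difference.
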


\begin{proof}
Let $c=a^{\prime}\wedge b^{\prime}$. Then $c\leq a\wedge b$ by the choice of
$a$ and $b$, therefore, since we also have $a\leq b^{\prime}$ and thus $b\leq
a^{\prime}$: $a\wedge a^{\prime}=a\wedge b^{\prime}\wedge a^{\prime}=a\wedge
c=c$; $b\wedge b^{\prime}=b\wedge a^{\prime}\wedge b^{\prime}=b\wedge c=c$;
$a\wedge b=a\wedge b^{\prime}\wedge b=a\wedge c=c$.
\end{proof}

\begin{lemma}
\label{b6f8}For any $\mathbf{A}\in\mathbb{PBI}$, we have:%
\[
\mathbf{B}_{6}\in{\mathrm{S}}_{{\mathbb{I}}}(\mathbf{A})\text{ iff }%
\mathbf{F}_{8}\in{\mathrm{S}}_{\mathbb{BI}}(\mathbf{A})\text{.}%
\]

\end{lemma}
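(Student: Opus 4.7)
The plan is to prove each direction in turn, with the right-to-left implication being essentially free and the left-to-right implication resting on a single application of paraorthomodularity.

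For the $(\Leftarrow)$ direction, I would simply observe that the ``middle layer'' of $\mathbf{F}_8 = \mathbf{D}_2 \oplus \mathbf{B}_6 \oplus \mathbf{D}_2$ is, by construction of the ordinal sum, an involution-lattice reduct isomorphic to $\mathbf{B}_6$ (the top and bottom of the embedded $\mathbf{B}_6$ are no longer the bounds of the ambient algebra, but they are still a greatest and least element of the subset, and the involution of $\mathbf{F}_8$ restricts to the involution of $\mathbf{B}_6$ on these six elements). Hence any BI--embedding of $\mathbf{F}_8$ into $\mathbf{A}$ restricts to an I--embedding of $\mathbf{B}_6$ into $\mathbf{A}$.

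For the nontrivial $(\Rightarrow)$ direction, I fix an I--embedding of $\mathbf{B}_6$ into $\mathbf{A}$ and, by a slight abuse of notation, denote the images of the four atoms/coatoms of $\mathbf{B}_6$ by $a,b,b^\prime,a^\prime\in A$, with $a\leq b^\prime$ and $a\neq b^\prime$. Set $u=a\wedge b$ and $v=a\vee b$; since the embedding preserves the involution and the lattice operations, we also have $u=a^\prime\wedge b^\prime$, $v=a^\prime\vee b^\prime$, and $u^\prime=v$. The plan is to show that $u\neq 0$ and $v\neq 1$, so that $\{0,u,a,b,b^\prime,a^\prime,v,1\}$ is a set of $8$ distinct elements of $A$ closed under $\wedge,\vee,\cdot^\prime$ and containing $0,1$, and is thus a BI--sublattice of $\mathbf{A}$ isomorphic to $\mathbf{F}_8$.

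The crux is the inequality $u\neq 0$, and this is where paraorthomodularity enters. Suppose for contradiction that $u=0$; then $a\leq b^\prime$ and $a^\prime\wedge b^\prime=u=0$, so applying paraorthomodularity of $\mathbf{A}$ to the pair $(a,b^\prime)$ forces $a=b^\prime$, contradicting the injectivity of the embedding. Applying $\cdot^\prime$ yields $v=u^\prime\neq 0^\prime=1$. Closure of the $8$--element set under $\wedge,\vee$ is then automatic from the chain structure of the ordinal sum together with the fact that the meets and joins internal to $\mathbf{B}_6$ are preserved; distinctness of the eight elements follows from $0<u\leq$ every middle element $\leq v<1$ combined with the injectivity of the embedding on $\mathbf{B}_6$. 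I anticipate no real obstacle: the entire argument pivots on the single paraorthomodular deduction above, and Lemma~\ref{posabc} is not needed here (it records a parallel structural fact about such configurations, rather than being required in this proof).
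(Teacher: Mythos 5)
Your proposal is correct and follows essentially the same route as the paper: the right-to-left direction is immediate, and for the converse the single paraorthomodular deduction (if the bottom $u=a\wedge b=a'\wedge b'$ of the embedded $\mathbf{B}_6$ were $0$, then $a\leq b'$ and $a'\wedge b'=0$ would force $a=b'$) is exactly the paper's argument, with your $u,v$ playing the role of its $c,c'$. The only cosmetic difference is that you explicitly verify $v\neq 1$ via the involution, which the paper leaves implicit.
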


\begin{proof}
The right-to-left direction is trivial. Now assume that $\mathbf{B}_{6}%
\in{\mathrm{S}}_{{\mathbb{I}}}(\mathbf{A})$, with $B_{6}=\{c,a,b,a^{\prime
},b^{\prime},c^{\prime}\}\subseteq A$, where $c=a\wedge b$ and $a<b^{\prime}$.
Assume ex absurdo that $c=0$, so that $a^{\prime}\wedge b^{\prime}=0$. Since
$\mathbf{A}$ is paraorthomodular, it follows that $a=b^{\prime}$, and we have
a contradiction. Therefore $c\neq0$, so, if we denote by
$L=\{0,c,a,b,a^{\prime},b^{\prime},c^{\prime},1\}$, then $\mathbf{F}_{8}%
\cong_{\mathbb{BI}}\mathbf{L}\in{\mathrm{S}}_{\mathbb{BI}}(\mathbf{A})$.
\end{proof}

\begin{proposition}
\label{pfrancesco}For any $\mathbf{A}\in{\mathbb{I}}$, we have:%
\[
\mathbf{A}\vDash\mbox{\textcircled{\sc q}}\text{ iff }\mathbf{B}_{6}%
\notin{\mathrm{S}}_{{\mathbb{I}}}(\mathbf{A})\text{.}%
\]

\end{proposition}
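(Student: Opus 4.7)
The plan is to prove both implications by contraposition. The right-to-left direction is immediate: if $\mathbf{B}_{6}\in{\mathrm{S}}_{{\mathbb{I}}}(\mathbf{A})$ with the labels as in the Hasse diagram, then taking $x:=a$ and $y:=b$ witnesses the failure of \Q, since in $\mathbf{B}_{6}$ we have $a\le b'$ and $a'\wedge b' = c = a\wedge b$, yet $a\ne b'$.

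For the forward direction, suppose $\mathbf{A}\not\vDash\Q$, so there exist $a,b\in A$ with $a\le b'$, $a'\wedge b'\le a\wedge b$, and $a\ne b'$. By Lemma~\ref{posabc}, setting $c:=a\wedge b$ yields the chain of equalities $c = a\wedge a' = b\wedge b' = a'\wedge b'$, and because $\cdot'$ is a dual lattice automorphism, $c' = a'\vee b' = a\vee b$. The plan is then to exhibit $L:=\{c,a,b,a',b',c'\}$ as a sub-I--lattice of $\mathbf{A}$ isomorphic to $\mathbf{B}_{6}$.

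The main obstacle will be verifying that these six elements are pairwise distinct, since Lemma~\ref{posabc} \emph{a priori} admits degenerate collapses. The key observation is that $c=a$ is impossible: it would force $a\le b$, hence $b'\le a'$ by order--reversal, hence $a'\wedge b'=b'$; combined with $a'\wedge b'=c=a$, this gives $a=b'$, contradicting the hypothesis. Symmetric arguments rule out $c\in\{b,a',b'\}$. The equality $c=c'$ is excluded similarly: from $a'\le c'=c\le a$ one deduces $a'\le a$, whence $c=a\wedge a'=a'$, contradicting $c\ne a'$. Pairs such as $a=a'$ or $a=b$ are then forbidden because each forces $c=a$ (e.g.\ $a=b$ implies $a\le a'$ via $a\le b'$, so $c=a\wedge a'=a$), and the remaining distinctness claims follow by applying $\cdot'$ or by symmetry in $a,b$.

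Once distinctness is established, closure of $L$ under $\cdot'$ is immediate from its construction, and closure under $\wedge,\vee$ is a routine finite case check: each pairwise meet or join among $a,b,a',b'$ either reduces trivially (e.g.\ $a\wedge b'=a$ since $a\le b'$, or $a\vee a'=c'$ since $c'$ is the unique element of $L$ lying above both $a$ and $a'$) or returns $c$ or $c'$ via Lemma~\ref{posabc} and De Morgan. Thus $L$ carries precisely the $\mathbf{B}_{6}$ structure, giving the desired embedding and completing the contrapositive.
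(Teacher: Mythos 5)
Your proof is correct and follows essentially the same route as the paper's: the easy direction by exhibiting the $\mathbf{B}_{6}$ witness, and the converse by contraposition, invoking Lemma \ref{posabc} to collapse the four meets to a single element $c$ and then checking that $\{c,a,b,a',b',c'\}$ is a sub-I--lattice isomorphic to $\mathbf{B}_{6}$. The only difference is organizational --- you verify pairwise distinctness of the six elements (anchored on the observation that $c=a$ forces $a=b'$), whereas the paper verifies the relevant incomparabilities directly --- but the two case analyses are interchangeable and both complete the argument.
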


\begin{proof}
For the direct implication, assume that $\mathbf{B}_{6}\in{\mathrm{S}%
}(\mathbf{A})$, with $B_{6}=\{c,a,b,a^{\prime},$\linebreak$b^{\prime
},c^{\prime}\}\subseteq A$, where $c=a\wedge b$ and $a<b^{\prime}$. Then
$a\leq b^{\prime}$ and $a^{\prime}\wedge b^{\prime}=a\wedge b\leq a\wedge b$,
but $a\neq b^{\prime}$, hence $\mathbf{A}\nvDash\mbox{\textcircled{\sc q}}$.

For the converse, assume that $\mathbf{A}\nvDash\mbox{\textcircled{\sc q}}$,
so that there exist $a,b\in A$ with $a^{\prime}\wedge b^{\prime}\leq a\wedge
b$ and $a<b^{\prime}$, so $b<a^{\prime}$. Then, by Lemma \ref{posabc}, if we
denote by $c=a^{\prime}\wedge b^{\prime}$, then $c=a\wedge b=a\wedge
a^{\prime}=b\wedge b^{\prime}$. Since $a<b^{\prime}$, $a\wedge b\leq
a^{\prime}\vee b^{\prime}$; were it the case that $a\wedge b=a^{\prime}\vee
b^{\prime}$, we would have that $a^{\prime}\leq a^{\prime}\vee b^{\prime
}=a\wedge b\leq b$, a contradiction. Hence $c^{\prime}=(a\wedge b)^{\prime
}=a^{\prime}\vee b^{\prime}>a\wedge b=c$. Also, $a\vee b=(a^{\prime}\wedge
b^{\prime})^{\prime}=c^{\prime}$, $a\vee a^{\prime}=(a\wedge a^{\prime
})^{\prime}=c^{\prime}$ and $b\vee b^{\prime}=(b\wedge b^{\prime})^{\prime
}=c^{\prime}$. If we had $a\leq b$, then $a\leq b\wedge b^{\prime}=c=a\wedge
a^{\prime}\leq a$, hence $c=a\wedge a^{\prime}=a<b^{\prime}\leq a^{\prime
}\wedge b^{\prime}=c$, and we have a contradiction again. Similarly, $b\nleq
a$. Hence $a$ and $b$ are incomparable. Were it $a\leq a^{\prime}$, then
$c=a\wedge a^{\prime}=a$, which would lead to the same contradiction as above.
On the other hand, if $a^{\prime}\leq a$, then $c=a\wedge a^{\prime}%
=a^{\prime}>b\geq b\wedge b^{\prime}=c$, which gives us another contradiction.
Hence $a$ and $a^{\prime}$ are incomparable and so are, analogously, $b$ and
$b^{\prime}$. Therefore, if we denote by $L=\{c,a,b,a^{\prime},b^{\prime
},c^{\prime}\}$, then $\mathbf{B}_{6}\cong_{{\mathbb{I}}}\mathbf{L}%
\in{\mathrm{S}}_{{\mathbb{I}}}(\mathbf{A})$.
\end{proof}

\begin{theorem}
\label{francesco}For any $\mathbf{A}\in\mathbb{PBI}$, we have:%
\[
\mathbf{A}\vDash\mbox{\textcircled{\sc q}}\text{ iff }\mathbf{F}_{8}%
\notin{\mathrm{S}}_{\mathbb{BI}}(\mathbf{A})\text{.}%
\]

\end{theorem}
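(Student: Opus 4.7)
The plan is essentially to chain together the two preceding results, Proposition \ref{pfrancesco} and Lemma \ref{b6f8}, since Theorem \ref{francesco} follows from them with no further work. All the real content has already been absorbed into those two statements: Proposition \ref{pfrancesco} handles the passage between the quasiequation \Q{} and the existence of a $\mathbf{B}_6$ sub-involution lattice (in arbitrary I--lattices), while Lemma \ref{b6f8} handles the passage between $\mathbf{B}_6$ as a sub-involution lattice and $\mathbf{F}_8$ as a sub-BI--lattice, crucially using paraorthomodularity to rule out the degenerate case $a\wedge b=0$.

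Concretely, given $\mathbf{A}\in\mathbb{PBI}$, I would argue as follows. Since $\mathbf{A}$ is in particular an I--lattice, Proposition \ref{pfrancesco} gives the equivalence
\[
\mathbf{A}\vDash\Q\quad\text{iff}\quad \mathbf{B}_6\notin \mathrm{S}_{\mathbb{I}}(\mathbf{A}).
\]
Taking contrapositives on both sides of the equivalence in Lemma \ref{b6f8} (which is applicable because $\mathbf{A}\in\mathbb{PBI}$), we obtain
\[
\mathbf{B}_6\notin \mathrm{S}_{\mathbb{I}}(\mathbf{A})\quad\text{iff}\quad \mathbf{F}_8\notin \mathrm{S}_{\mathbb{BI}}(\mathbf{A}).
\]
Chaining these two equivalences yields the desired conclusion $\mathbf{A}\vDash\Q$ iff $\mathbf{F}_8\notin \mathrm{S}_{\mathbb{BI}}(\mathbf{A})$.

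Since the argument is purely a composition of two biconditionals already established, there is no genuine obstacle to overcome here; the only thing to double-check is that the hypothesis $\mathbf{A}\in\mathbb{PBI}$ is strong enough to invoke both cited results, which it is (paraorthomodularity is exactly what Lemma \ref{b6f8} requires, and being an I--lattice is all that Proposition \ref{pfrancesco} requires). The statement could in principle be merged into a single corollary, but stating it as a theorem emphasises that \Q{} is the quasiequational characterisation of the forbidden configuration $\mathbf{F}_8$ among paraorthomodular BI--lattices, which is the headline result motivating the preceding technical lemmas.
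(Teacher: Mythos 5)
Your proposal is correct and coincides with the paper's own proof, which likewise just cites Lemma \ref{b6f8} and Proposition \ref{pfrancesco} and chains the two equivalences (negating both sides of the lemma's biconditional). Nothing further is needed.
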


\begin{proof}
By Lemma \ref{b6f8} and Proposition \ref{pfrancesco}.
\end{proof}

\begin{example}
Here is an antiortholattice (in particular, a paraorthomodular BI--lattice)
$\mathbf{A}$ such that $\mathbf{F}_{8}\notin{\mathrm{S}}_{\mathbb{BI}%
}(\mathbf{A})$, but $\mathbf{F}_{8}\in{\mathrm{H}}_{\mathbb{BZL}}(\mathbf{A}%
)$, in particular $\mathbf{F}_{8}\in{\mathrm{S}}_{\mathbb{BI}}(({\mathrm{H}%
}_{\mathbb{BZL}}(\mathbf{A})))\subseteq{\mathrm{S}}_{\mathbb{BI}}({\mathrm{H}%
}_{\mathbb{BI}}(\mathbf{A}))$:%

\begin{center}\hspace*{35pt}\begin{picture}(60,112)(0,0)
\put(40,-20){\circle*{3}}
\put(40,0){\circle*{3}}
\put(20,20){\circle*{3}}
\put(60,20){\circle*{3}}
\put(40,40){\circle*{3}}
\put(40,80){\circle*{3}}
\put(40,-20){\line(0,1){20}}
\put(40,40){\line(0,1){40}}
\put(40,0){\line(1,1){20}}
\put(40,0){\line(-1,1){20}}
\put(40,40){\line(1,-1){20}}
\put(40,40){\line(-1,-1){20}}
\put(40,0){\line(-2,1){40}}
\put(0,20){\circle*{3}}
\put(0,20){\line(0,1){40}}
\put(0,60){\line(1,1){20}}
\put(0,60){\line(-1,1){20}}
\put(0,100){\line(1,-1){20}}
\put(0,100){\line(-1,-1){20}}
\put(20,20){\line(-2,3){40}}
\put(60,20){\line(-2,3){40}}
\put(40,80){\line(-2,1){40}}
\put(0,60){\circle*{3}}
\put(-20,80){\circle*{3}}
\put(20,80){\circle*{3}}
\put(0,100){\circle*{3}}
\put(0,120){\circle*{3}}
\put(0,120){\line(0,-1){20}}
\put(-3,12){$a$}
\put(61,12){$b$}
\put(43,-4){$c$}
\put(3,101){$c^{\prime }$}
\put(-26,80){$b^{\prime }$}
\put(43,79){$a^{\prime }$}
\put(14,69){$e^{\prime }$}
\put(2,56){$d^{\prime }$}
\put(33,40){$d$}
\put(19,24){$e$}
\put(38,-29){$0$}
\put(-2,123){$1$}
\put(-50,105){${\bf A}$:}\end{picture}
\end{center}

The equivalence relation $\theta$ with cosets%
\[
\{0\},\{a\},\{c,e\},\{b,d\},\{b^{\prime},d^{\prime}\},\{c^{\prime},e^{\prime
}\},\{a^{\prime}\},\{1\}
\]
belongs to\ $\mathrm{Con}_{\mathbb{BI}01}(\mathbf{A})\subset\mathrm{Con}%
_{\mathbb{BZL}}(\mathbf{A})$ and $\mathbf{A}/\theta\cong\mathbf{F}_{8}$, but,
as announced above, $\mathbf{F}_{8}\notin{\mathrm{S}}_{\mathbb{BI}}%
(\mathbf{A})$.
\end{example}

\begin{corollary}
\mbox{\textcircled{\sc q}}\ is not an equational condition in $\mathbb{PBI}$
or $\mathbb{PBZL}^{\ast}$.
\end{corollary}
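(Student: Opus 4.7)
The plan is to leverage the example constructed immediately before the corollary, together with Theorem \ref{francesco}. Since an equational class must be closed under homomorphic images, it suffices to exhibit, within each of $\mathbb{PBI}$ and $\mathbb{PBZL}^{\ast}$, a single algebra that satisfies \mbox{\textcircled{\sc q}}\ but has a homomorphic image (inside the same class) that fails it.

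The witness is already at hand: the antiortholattice $\mathbf{A}$ displayed in the preceding example. Being an antiortholattice, $\mathbf{A}$ has a paraorthomodular BI--lattice reduct and is a member of $\mathbb{PBZL}^{\ast}$, so $\mathbf{A}\in\mathbb{PBI}\cap\mathbb{PBZL}^{\ast}$. The example asserts that $\mathbf{F}_{8}\notin{\mathrm{S}}_{\mathbb{BI}}(\mathbf{A})$, whence Theorem \ref{francesco} yields $\mathbf{A}\vDash\mbox{\textcircled{\sc q}}$. For the quotient, the same example exhibits an equivalence $\theta$ that lies simultaneously in $\mathrm{Con}_{\mathbb{BI}01}(\mathbf{A})$ and $\mathrm{Con}_{\mathbb{BZL}}(\mathbf{A})$, with $\mathbf{A}/\theta\cong\mathbf{F}_{8}$. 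Thus $\mathbf{F}_{8}\in{\mathrm{H}}_{\mathbb{BI}}(\mathbf{A})\cap{\mathrm{H}}_{\mathbb{BZL}}(\mathbf{A})$, and $\mathbf{F}_{8}$ again sits in both $\mathbb{PBI}$ and $\mathbb{PBZL}^{\ast}$ (as an antiortholattice). Since trivially $\mathbf{F}_{8}\in{\mathrm{S}}_{\mathbb{BI}}(\mathbf{F}_{8})$, Theorem \ref{francesco} gives $\mathbf{F}_{8}\nvDash\mbox{\textcircled{\sc q}}$.

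Putting the two observations together finishes the argument: if \mbox{\textcircled{\sc q}}\ were equivalent to a set of equations in $\mathbb{PBI}$ (respectively $\mathbb{PBZL}^{\ast}$), then its class of models in that variety would be closed under $\mathrm{H}$, forcing $\mathbf{A}/\theta\vDash\mbox{\textcircled{\sc q}}$ and contradicting the conclusion of the previous paragraph. There is essentially no obstacle left: all the non-trivial content---namely, the construction of $\mathbf{A}$ and its congruence $\theta$, together with the quasiequational characterisation of the absence of $\mathbf{F}_{8}$---has already been supplied by the example and by Theorem \ref{francesco}, so what remains is a one-line appeal to closure of equational classes under homomorphic images.
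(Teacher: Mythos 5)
Your proposal is correct and follows exactly the route the paper intends: the corollary is an immediate consequence of the preceding example together with Theorem \ref{francesco}, since $\mathbf{A}\vDash\mbox{\textcircled{\sc q}}$ while its homomorphic image $\mathbf{A}/\theta\cong\mathbf{F}_{8}$ lies in both $\mathbb{PBI}$ and $\mathbb{PBZL}^{\ast}$ and fails \mbox{\textcircled{\sc q}}, contradicting closure of equational classes under homomorphic images. The paper leaves this deduction implicit, and you have spelled it out accurately, including the needed observation that $\mathbf{F}_{8}$ itself belongs to the relevant classes so that Theorem \ref{francesco} applies to it.
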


Now let us investigate the subvarieties of $\mathbb{PBZL}^{\ast}$ that contain
$\mathbf{F}_{8}$. We consider the following equation in the language of BZ--lattices:

\begin{flushleft}%
\begin{tabular}
[c]{rl}%
\mbox{D2OL$\vee$} & $(x\wedge x^{\prime})^{\sim}\vee(y\wedge y^{\prime}%
)^{\sim}\vee x\vee x^{\prime}\approx(x\wedge x^{\prime})^{\sim}\vee(y\wedge
y^{\prime})^{\sim}\vee y\vee y^{\prime}$%
\end{tabular}

\end{flushleft}

By \cite{rgcmfp}, $V_{\mathbb{BZL}}(\mathbb{AOL})$ is axiomatised by
\mbox{\textup{J0}}\ relative to $\mathbb{PBZL}^{\ast}$. By \cite{pbz5},
$V_{\mathbb{BZL}}(\mathbf{D}_{2}\oplus\mathbb{OL}\oplus\mathbf{D}_{2})$ is
axiomatised by \mbox{D2OL$\vee$}\ relative to $\mathbb{SAOL}$.

We use the following notation from \cite{pbz5}: for any $k,n,p\in{\mathbb{N}}$
and any equation $t\approx u$, where $t(x_{1},\ldots,x_{k},z_{1},\ldots
,z_{p})$ and $u(y_{1},\ldots,y_{n},z_{1},\ldots,z_{p})$ are terms in the
language of $\mathbb{BI}$ having the arities $k+p$, respectively $n+p$, and
$p$ common variables $z_{1},\ldots,z_{p}$, we denote by $m(t,u)$ the following
$(k+n)$--ary term in the language of $\mathbb{BZL}$:%
\[
m(t,u)(x_{1},\ldots,x_{k},y_{1},\ldots,y_{n},z_{1},\ldots,z_{p})=
\]
\vspace*{-7pt}%
\[
\bigvee_{i=1}^{k}(x_{i}\wedge x_{i}^{\prime})^{\sim}\vee\bigvee_{j=1}%
^{n}(y_{j}\wedge y_{j}^{\prime})^{\sim}\vee\bigvee_{h=1}^{p}(z_{h}\wedge
z_{h}^{\prime})^{\sim}\vee t(x_{1},\ldots,x_{k},z_{1},\ldots,z_{p}).
\]
Note that:%
\[
m(u,t)(x_{1},\ldots,x_{k},y_{1},\ldots,y_{n},z_{1},\ldots,z_{p})=
\]
\vspace*{-7pt}%
\[
\bigvee_{i=1}^{k}(x_{i}\wedge x_{i}^{\prime})^{\sim}\vee\bigvee_{j=1}%
^{n}(y_{j}\wedge y_{j}^{\prime})^{\sim}\vee\bigvee_{h=1}^{p}(z_{h}\wedge
z_{h}^{\prime})^{\sim}\vee u(y_{1},\ldots,y_{n},z_{1},\ldots,z_{p}).
\]

\begin{lemma}
\textrm{\cite[Corollary $6.14$]{pbz5}} For any ${\mathbb{C}} \subseteq
\mathbb{BI} $ and any ${\mathbb{D}} \subseteq\mathbb{PKA} $, $V_{\mathbb{BI}
}(\mathbf{D}_{2}\oplus{\mathbb{C}} \oplus\mathbf{D}_{2})=V_{\mathbb{BI}
}(\mathbf{D}_{2}\oplus V_{\mathbb{BI} }({\mathbb{C}} )\oplus\mathbf{D}_{2})$
and $V_{\mathbb{BZL} }(\mathbf{D}_{2}\oplus{\mathbb{D}} \oplus\mathbf{D}%
_{2})=V_{\mathbb{BZL} }(\mathbf{D}_{2}\oplus V_{\mathbb{BI} }({\mathbb{D}}
)\oplus\mathbf{D}_{2})$.\label{d2vard2}
\end{lemma}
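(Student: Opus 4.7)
The plan is to prove both equalities by a unified three-step reduction that tracks how the construction $\mathbf{D}_{2}\oplus(-)\oplus\mathbf{D}_{2}$ interacts with the class operators $\mathrm{H}$, $\mathrm{S}$, and $\mathrm{P}$. The inclusion ``$\subseteq$'' is immediate, since $\mathbb{C}\subseteq V_{\mathbb{BI}}(\mathbb{C})$ and $\mathbb{D}\subseteq V_{\mathbb{BI}}(\mathbb{D})$. For the converse of the BI-equality, I would fix $\mathbf{A}\in V_{\mathbb{BI}}(\mathbb{C})=\mathrm{H}_{\mathbb{BI}}\mathrm{S}_{\mathbb{BI}}\mathrm{P}_{\mathbb{BI}}(\mathbb{C})$ together with a surjective BI-homomorphism $\varphi\colon\mathbf{B}\to\mathbf{A}$ with $\mathbf{B}\in\mathrm{S}_{\mathbb{BI}}(\prod_{i\in I}\mathbf{C}_{i})$ and $\mathbf{C}_{i}\in\mathbb{C}$, and then chain three facts: (i) $\mathbf{D}_{2}\oplus\mathbf{A}\oplus\mathbf{D}_{2}\in\mathrm{H}_{\mathbb{BI}}(\mathbf{D}_{2}\oplus\mathbf{B}\oplus\mathbf{D}_{2})$, via the map $\tilde{\varphi}$ extending $\varphi$ by the identity on the two outer $\mathbf{D}_{2}$'s; (ii) $\mathbf{D}_{2}\oplus\mathbf{B}\oplus\mathbf{D}_{2}\in\mathrm{S}_{\mathbb{BI}}(\mathbf{D}_{2}\oplus\prod_{i}\mathbf{C}_{i}\oplus\mathbf{D}_{2})$, directly from the definition of ordinal sum; and (iii) $\mathbf{D}_{2}\oplus\prod_{i}\mathbf{C}_{i}\oplus\mathbf{D}_{2}\in\mathrm{S}_{\mathbb{BI}}(\prod_{i\in I}(\mathbf{D}_{2}\oplus\mathbf{C}_{i}\oplus\mathbf{D}_{2}))$, via the embedding $\iota$ sending the overall $0$ to the constant tuple $(0)_{i}$, the overall $1$ to $(1)_{i}$, and any middle element $(a_{i})_{i}\in\prod_{i}\mathbf{C}_{i}$ to the tuple $(a_{i})_{i}$ with each $a_{i}$ viewed inside the middle block of $\mathbf{D}_{2}\oplus\mathbf{C}_{i}\oplus\mathbf{D}_{2}$.

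I expect step (iii) to be the main technical point. Verifying that $\iota$ preserves meet and join reduces, after a case split on whether each argument is the overall $0$, the overall $1$, or a middle element, to routine calculations that exploit the fact that $0_{\mathbf{C}_{i}}$ sits above $0$ but below every other middle element of $\mathbf{D}_{2}\oplus\mathbf{C}_{i}\oplus\mathbf{D}_{2}$, and dually for $1_{\mathbf{C}_{i}}$; this aligns the coordinatewise product operations with those of the ordinal sum on the left. Preservation of the involution follows from the way the involution of $\mathbf{D}_{2}\oplus\mathbf{X}\oplus\mathbf{D}_{2}$ is built in the excerpt, namely by glueing the involution of $\mathbf{X}$ with the canonical dual isomorphism between the two outer $\mathbf{D}_{2}$'s, a recipe that evidently commutes with taking direct products componentwise. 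Chaining (i)--(iii) then yields $\mathbf{D}_{2}\oplus\mathbf{A}\oplus\mathbf{D}_{2}\in\mathrm{H}_{\mathbb{BI}}\mathrm{S}_{\mathbb{BI}}\mathrm{P}_{\mathbb{BI}}(\mathbf{D}_{2}\oplus\mathbb{C}\oplus\mathbf{D}_{2})$, as required.

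For the BZ-equality, $\mathbb{D}\subseteq\mathbb{PKA}$ and every ordinal sum is to be treated as an antiortholattice with the trivial Brouwer complement. I would reuse exactly the same three maps and additionally verify preservation of $\cdot^{\sim}$. The trivial Brouwer complement sends the overall $0$ to the overall $1$ and every other element to the overall $0$; and each of $\tilde{\varphi}$, the subalgebra inclusion in (ii), and $\iota$ sends $0$ to $0$ while sending every non-$0$ element to a non-$0$ element of the target. For $\iota$ this last point is the only delicate one: a middle element of $\mathbf{D}_{2}\oplus\prod_{i}\mathbf{C}_{i}\oplus\mathbf{D}_{2}$ is sent to a tuple each of whose coordinates lies in the middle block of the respective factor, so its coordinatewise Brouwer complement is the constant tuple $(0)_{i}=\iota(0)$, matching the value of $\cdot^{\sim}$ on the source. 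The chain in the previous paragraph then places $\mathbf{D}_{2}\oplus\mathbf{A}\oplus\mathbf{D}_{2}$ inside $V_{\mathbb{BZL}}(\mathbf{D}_{2}\oplus\mathbb{D}\oplus\mathbf{D}_{2})$ for any $\mathbf{A}\in V_{\mathbb{BI}}(\mathbb{D})$, completing the argument.
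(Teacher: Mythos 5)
The paper itself offers no proof of this lemma --- it is imported verbatim from \cite[Corollary 6.14]{pbz5} --- so there is nothing internal to compare your argument against; judged on its own, your proof is correct and is the natural one. The inclusion $\subseteq$ is indeed trivial, and for $\supseteq$ the entire content is that the operator $\mathbf{D}_{2}\oplus(-)\oplus\mathbf{D}_{2}$ commutes, up to the class operators $\mathrm{H}$, $\mathrm{S}$, $\mathrm{P}$, with homomorphic images, subalgebras and direct products, which is exactly your chain (i)--(iii). You correctly single out the two delicate points: in (iii), the coordinatewise operations of $\prod_{i}(\mathbf{D}_{2}\oplus\mathbf{C}_{i}\oplus\mathbf{D}_{2})$ restricted to the image of $\iota$ agree with those of the ordinal sum on the left, because a middle tuple has every coordinate strictly between the new bounds, so meets, joins and the involution never leave the middle blocks while the new bounds remain absorbing; and, for the $\mathbb{BZL}$ half, the trivial Brouwer complement is preserved by all three maps precisely because each sends $0$ to $0$ and every nonzero element to a nonzero element --- for $\iota$ this again uses that a middle tuple is coordinatewise nonzero, which is the whole point of the bottom copy of $\mathbf{D}_{2}$. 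Two small things you should make explicit to be fully rigorous: first, $\mathbf{A}=\varphi(\mathbf{B})$ and $\mathbf{B}\leq\prod_{i}\mathbf{C}_{i}$ are pseudo--Kleene algebras (since $\mathbb{PKA}$ is a variety), so all the ordinal sums occurring in the $\mathbb{BZL}$ argument really are antiortholattices carrying the trivial Brouwer complement; second, your step (iii) needs the index set $I$ to be nonempty (always arrangeable when ${\mathbb{C}}\neq\emptyset$, since a trivial algebra is a homomorphic image of any member of ${\mathbb{C}}$), and indeed the statement tacitly presupposes ${\mathbb{C}},{\mathbb{D}}\neq\emptyset$, as the right-hand sides always contain $\mathbf{D}_{3}$ while the left-hand sides do not when the generating class is empty.
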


\begin{proposition}
$V_{\mathbb{BI} }(\mathbf{F}_{8})=V_{\mathbb{BI} }(\mathbf{D}_{2}\oplus
V_{\mathbb{BI} }(\mathbf{B}_{6})\oplus\mathbf{D}_{2})$ and $V_{\mathbb{BZL}
}(\mathbf{F}_{8})=V_{\mathbb{BZL} }(\mathbf{D}_{2}\oplus V_{\mathbb{BI}
}(\mathbf{B}_{6})\oplus\mathbf{D}_{2})$.\label{f8varb6}
\end{proposition}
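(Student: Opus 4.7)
The plan is to derive both equalities as immediate instances of Lemma \ref{d2vard2} applied to a singleton class. By definition, $\mathbf{F}_{8}=\mathbf{D}_{2}\oplus\mathbf{B}_{6}\oplus\mathbf{D}_{2}$, so after taking either $\mathbb{C}=\{\mathbf{B}_{6}\}$ or $\mathbb{D}=\{\mathbf{B}_{6}\}$ in that lemma, the left-hand sides collapse to $V_{\mathbb{BI}}(\mathbf{F}_{8})$ and $V_{\mathbb{BZL}}(\mathbf{F}_{8})$ respectively, while the right-hand sides match the target expressions verbatim.

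Concretely, for the first equality I would note that $\mathbf{B}_{6}\in\mathbb{BI}$ (in fact $\mathbf{B}_{6}\in\mathbb{OL}$), so the first clause of Lemma \ref{d2vard2} with $\mathbb{C}=\{\mathbf{B}_{6}\}$ gives
\[
V_{\mathbb{BI}}(\mathbf{D}_{2}\oplus\mathbf{B}_{6}\oplus\mathbf{D}_{2})=V_{\mathbb{BI}}(\mathbf{D}_{2}\oplus V_{\mathbb{BI}}(\mathbf{B}_{6})\oplus\mathbf{D}_{2}),
\]
and the left-hand side is $V_{\mathbb{BI}}(\mathbf{F}_{8})$ by the definition of $\mathbf{F}_{8}$.

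For the second equality I would invoke the second clause of the same lemma, which requires the hypothesis $\mathbb{D}\subseteq\mathbb{PKA}$. This is satisfied by $\mathbb{D}=\{\mathbf{B}_{6}\}$ since $\mathbf{B}_{6}\in\mathbb{OL}\subseteq\mathbb{PKA}$ (all its elements are sharp, whence $x\wedge x^{\prime}=0\leq y\vee y^{\prime}$ trivially). Therefore
\[
V_{\mathbb{BZL}}(\mathbf{D}_{2}\oplus\mathbf{B}_{6}\oplus\mathbf{D}_{2})=V_{\mathbb{BZL}}(\mathbf{D}_{2}\oplus V_{\mathbb{BI}}(\mathbf{B}_{6})\oplus\mathbf{D}_{2}),
\]
and again the left-hand side is $V_{\mathbb{BZL}}(\mathbf{F}_{8})$. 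Note that the construction $\mathbf{D}_{2}\oplus V_{\mathbb{BI}}(\mathbf{B}_{6})\oplus\mathbf{D}_{2}$ is BZL-meaningful precisely because every member of $V_{\mathbb{BI}}(\mathbf{B}_{6})$ is a pseudo--Kleene algebra (this is preserved under $\mathrm{HSP}$ from $\mathbf{B}_{6}$), so each ordinal sum acquires the trivial Brouwer complement and becomes an antiortholattice.

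There is no real obstacle here: once Lemma \ref{d2vard2} is invoked, the only verifications needed are the (trivial) membership facts $\mathbf{B}_{6}\in\mathbb{BI}$ and $\mathbf{B}_{6}\in\mathbb{PKA}$, plus the observation that $V_{\mathbb{BI}}(\mathbf{B}_{6})\subseteq\mathbb{PKA}$ so that the right-hand ordinal sums are well defined in $\mathbb{BZL}$. The substantive content lives entirely inside the cited Corollary $6.14$ of \cite{pbz5}, and the present proposition is essentially a bookkeeping application.
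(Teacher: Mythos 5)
Your proof is correct and is exactly the paper's argument: the authors also prove this by citing Lemma \ref{d2vard2} together with the identity $\mathbf{F}_{8}=\mathbf{D}_{2}\oplus\mathbf{B}_{6}\oplus\mathbf{D}_{2}$, leaving the membership checks $\mathbf{B}_{6}\in\mathbb{BI}$ and $\mathbf{B}_{6}\in\mathbb{OL}\subseteq\mathbb{PKA}$ implicit. Your write-up merely makes those routine verifications explicit.
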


\begin{proof}
By Lemma \ref{d2vard2} and the fact that $\mathbf{F}_{8}=\mathbf{D}_{2}%
\oplus\mathbf{B}_{6}\oplus\mathbf{D}_{2}$.
\end{proof}

The following consequence of results from \cite{pbz5} shows that we can obtain
an axiomatisation for $V_{\mathbb{BZL}}(\mathbf{F}_{8})$ relative to
$\mathbb{PBZL}^{\ast}$ from an axiomatisation of $V_{\mathbb{BI}}%
(\mathbf{B}_{6})$ relative to $\mathbb{OL}$; note that any such axiomatisation
can be written with nonnullary terms over $\mathbb{BI}$, since $\mathbb{OL}$
satisfies the equations $x\vee x^{\prime}\approx1$ and $x\wedge x^{\prime
}\approx0$.

\begin{corollary}
$\{t_{i}\approx u_{i}:i\in I\}$ is an axiomatisation of $V_{\mathbb{BI}%
}(\mathbf{B}_{6})$ relative to $\mathbb{OL}$ such that, for each $i\in I$, the
terms $t_{i}$ and $u_{i}$ have nonzero arities iff $\{m(t_{i},u_{i})\approx
m(u_{i},t_{i}):i\in I\}\cup\{\mbox{\textup{J0}},\mbox{D2OL$\vee$}\}$ is an
axiomatisation of $V_{\mathbb{BZL}}(\mathbf{F}_{8})$ relative to
$\mathbb{PBZL}^{\ast}$.
\end{corollary}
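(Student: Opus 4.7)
My strategy is to view this corollary as a specialization of the axiomatization-transfer machinery developed in \cite{pbz5} to the case $\mathbb{C} = V_{\mathbb{BI}}(\mathbf{B}_{6}) \subseteq \mathbb{OL}$. By Proposition~\ref{f8varb6}, $V_{\mathbb{BZL}}(\mathbf{F}_{8}) = V_{\mathbb{BZL}}(\mathbf{D}_{2} \oplus V_{\mathbb{BI}}(\mathbf{B}_{6}) \oplus \mathbf{D}_{2})$, so the task reduces to axiomatizing such a ``sandwich'' variety. The ``outer'' layer $V_{\mathbb{BZL}}(\mathbf{D}_{2} \oplus \mathbb{OL} \oplus \mathbf{D}_{2})$ should first be pinned down by $\{\mbox{\textup{J0}}, \mbox{D2OL$\vee$}\}$ over $\mathbb{PBZL}^{\ast}$, by composing the two preceding cited results; the ``inner'' layer, passing from $\mathbb{OL}$ to $V_{\mathbb{BI}}(\mathbf{B}_{6})$, will then be controlled by the $m$-translated axioms.

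The semantic core is the observation that in any antiortholattice of the form $\mathbf{D}_{2} \oplus \mathbf{K} \oplus \mathbf{D}_{2}$ with $\mathbf{K} \in \mathbb{BI}$, the term $(z \wedge z^{\prime})^{\sim}$ acts as a characteristic function of the sharp elements: it equals $1$ when $z \in \{0,1\}$ and $0$ when $z \in K$. Hence $m(t, u)(\bar{x}, \bar{y}, \bar{z})$ evaluates to $1$ as soon as at least one of its arguments is assigned a value in $\{0, 1\}$, and otherwise (all inputs in $K$) collapses to $t(\bar{x}, \bar{z})$ computed inside $\mathbf{K}$, since $K$ is a $\mathbb{BI}$-subalgebra. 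This yields the key equivalence for any nonnullary $t, u$:
\[
\mathbf{D}_{2} \oplus \mathbf{K} \oplus \mathbf{D}_{2} \vDash m(t, u) \approx m(u, t) \iff \mathbf{K} \vDash t \approx u.
\]
By Lemma~\ref{d2vard2}, this equivalence transfers to varieties: the translated equation holds throughout $V_{\mathbb{BZL}}(\mathbf{D}_{2} \oplus \mathbb{K} \oplus \mathbf{D}_{2})$ iff the original holds throughout $V_{\mathbb{BI}}(\mathbb{K})$, for any $\mathbb{K} \subseteq \mathbb{BI}$.

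Armed with this correspondence, both directions of the biconditional follow by a symmetric argument. For ``$\Rightarrow$'', a nonnullary axiomatization $\{t_{i} \approx u_{i}\}$ of $V_{\mathbb{BI}}(\mathbf{B}_{6})$ over $\mathbb{OL}$ translates, via the above, into equations that carve out $V_{\mathbb{BZL}}(\mathbf{F}_{8})$ from $V_{\mathbb{BZL}}(\mathbf{D}_{2} \oplus \mathbb{OL} \oplus \mathbf{D}_{2})$; prepending $\{\mbox{\textup{J0}}, \mbox{D2OL$\vee$}\}$ then yields the desired axiomatization over $\mathbb{PBZL}^{\ast}$. For ``$\Leftarrow$'', assuming the translated set together with $\{\mbox{\textup{J0}}, \mbox{D2OL$\vee$}\}$ axiomatizes $V_{\mathbb{BZL}}(\mathbf{F}_{8})$ over $\mathbb{PBZL}^{\ast}$, one restricts attention to sandwich algebras $\mathbf{D}_{2} \oplus \mathbf{K} \oplus \mathbf{D}_{2}$ with $\mathbf{K} \in \mathbb{OL}$ and inverts the correspondence to recover an axiomatization of $V_{\mathbb{BI}}(\mathbf{B}_{6})$ over $\mathbb{OL}$.

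The main obstacle I anticipate lies in the opening step: one must verify that $\{\mbox{\textup{J0}}, \mbox{D2OL$\vee$}\}$ really axiomatizes $V_{\mathbb{BZL}}(\mathbf{D}_{2} \oplus \mathbb{OL} \oplus \mathbf{D}_{2})$ over $\mathbb{PBZL}^{\ast}$, whereas the cited result from \cite{pbz5} provides $\mbox{D2OL$\vee$}$ only as an axiom relative to $\mathbb{SAOL}$. The question is whether $\mbox{D2OL$\vee$}$ is strong enough, in the presence of $\mbox{\textup{J0}}$, to force $\mathbb{SDM}$ throughout $V_{\mathbb{BZL}}(\mathbb{AOL})$ --- which amounts to ruling out any non-$\mathbb{SDM}$ subdirectly irreducible satisfying $\mbox{D2OL$\vee$}$. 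Once this link between the two cited results is made precise, the rest of the argument is routine bookkeeping of equivalence classes of equations under the $m$-translation.
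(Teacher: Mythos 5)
Your overall strategy matches the paper's: reduce via Proposition \ref{f8varb6} to the sandwich variety $V_{\mathbb{BZL}}(\mathbf{D}_{2}\oplus V_{\mathbb{BI}}(\mathbf{B}_{6})\oplus\mathbf{D}_{2})$ and then transfer the axiomatisation through the $m$-translation. The paper, however, performs the entire transfer by a single citation of \cite[Theorem $6.38.(ii)$]{pbz5}, so what you are really doing is reconstructing the content of that black box. Your ``semantic core'' is correct: in $\mathbf{D}_{2}\oplus\mathbf{K}\oplus\mathbf{D}_{2}$ the guard terms $(z\wedge z^{\prime})^{\sim}$ are the characteristic function of $\{0,1\}$, and nonnullarity is exactly what prevents the untranslated side from outputting the new bounds $0,1$ instead of the bounds of $K$ (e.g.\ $t\approx u$ being $x\vee x^{\prime}\approx 1$ breaks the correspondence).

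The genuine gap is the one you flag yourself and then leave open: you never establish that $\{\mbox{\textup{J0}},\mbox{D2OL$\vee$}\}$ axiomatises $V_{\mathbb{BZL}}(\mathbf{D}_{2}\oplus\mathbb{OL}\oplus\mathbf{D}_{2})$ relative to $\mathbb{PBZL}^{\ast}$, which is the base on which your whole plan rests; as written, this is a plan with an acknowledged hole rather than a proof. The hole can be closed elementarily. In an antiortholattice $\mathbf{A}$ one has $(x\wedge x^{\prime})^{\sim}=1$ iff $x\in\{0,1\}$ (by $(\ast)$ and the trivial Brouwer complement), and $(x\wedge x^{\prime})^{\sim}=0$ otherwise; hence $\mathbf{A}\vDash\mbox{D2OL$\vee$}$ iff $x\vee x^{\prime}$ takes a single value $c$ on $A\setminus\{0,1\}$, and then, applying the involution, $x\wedge x^{\prime}=c^{\prime}$ there as well. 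If $0$ were not meet--irreducible, say $a\wedge b=0$ with $a,b\notin\{0,1\}$, then $c^{\prime}=a\wedge a^{\prime}\leq a$ and $c^{\prime}=b\wedge b^{\prime}\leq b$, so $c^{\prime}\leq a\wedge b=0$, whence $c=1$ and $a\vee a^{\prime}=1$, contradicting $S(\mathbf{A})=\{0,1\}$. Thus every antiortholattice satisfying \mbox{D2OL$\vee$} satisfies SDM and so lies in $\mathbb{SAOL}$; since by Lemma \ref{sdirr} the subdirectly irreducible members of $V_{\mathbb{BZL}}(\mathbb{AOL})$ are antiortholattices, the subvariety of $\mathbb{PBZL}^{\ast}$ defined by $\{\mbox{\textup{J0}},\mbox{D2OL$\vee$}\}$ coincides with the subvariety of $\mathbb{SAOL}$ defined by \mbox{D2OL$\vee$}, which is $V_{\mathbb{BZL}}(\mathbf{D}_{2}\oplus\mathbb{OL}\oplus\mathbf{D}_{2})$ by the result you cite. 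With this step supplied, the remaining bookkeeping in your argument goes through.
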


\begin{proof}
By Proposition \ref{f8varb6}, the fact that $V_{\mathbb{BI} }(\mathbf{B}%
_{6})\subseteq\mathbb{OL} $ and \cite[Theorem $6.38.(ii)$]{pbz5}.
\end{proof}

\begin{theorem}
\textrm{\cite[Theorem $6.25$]{pbz5}} The operator ${\mathbb{V}} \mapsto
V_{\mathbb{BZL} }(\mathbf{D}_{2}\oplus{\mathbb{V}} \oplus\mathbf{D}_{2})$ is a
bounded lattice embedding from the lattice of subvarieties of $\mathbb{PKA} $
to the principal filter generated by $V_{\mathbb{BZL} }(\mathbf{D}_{3})$ in
the lattice of subvarieties of $\mathbb{SAOL} $.\label{thegenop}
\end{theorem}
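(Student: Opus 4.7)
The plan is to check four things about $F: \mathbb{V} \mapsto V_{\mathbb{BZL}}(\mathbf{D}_2 \oplus \mathbb{V} \oplus \mathbf{D}_2)$: (a) it lands in the principal filter generated by $V_{\mathbb{BZL}}(\mathbf{D}_3)$ inside $\mathrm{Subvar}(\mathbb{SAOL})$, (b) it preserves finite joins, (c) it preserves finite meets, and (d) it is injective. Preservation of the two bounds will then come for free from the join/meet steps applied to the trivial variety and to the whole of $\mathbb{PKA}$, the latter using the density fact that $\mathbb{SAOL} = V_{\mathbb{BZL}}(\mathbf{D}_2 \oplus \mathbb{PKA} \oplus \mathbf{D}_2)$.

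For (a), every $\mathbf{D}_2 \oplus \mathbf{A} \oplus \mathbf{D}_2$ has $0$ meet-irreducible, so it is an antiortholattice satisfying \textrm{SDM}, whence $F(\mathbb{V}) \subseteq \mathbb{SAOL}$. The trivial pseudo-Kleene algebra $\mathbf{D}_1$ lies in every subvariety of $\mathbb{PKA}$, so $\mathbf{D}_3 = \mathbf{D}_2 \oplus \mathbf{D}_1 \oplus \mathbf{D}_2$ belongs to every $F(\mathbb{V})$, placing the image in the principal filter and furthermore sending the bottom of $\mathrm{Subvar}(\mathbb{PKA})$ exactly onto $V_{\mathbb{BZL}}(\mathbf{D}_3)$. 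For (b), monotonicity yields $F(\mathbb{V}_1) \vee F(\mathbb{V}_2) \subseteq F(\mathbb{V}_1 \vee \mathbb{V}_2)$; for the reverse I would invoke Lemma \ref{d2vard2} to rewrite $V_{\mathbb{BZL}}(\mathbf{D}_2 \oplus (\mathbb{V}_1 \vee \mathbb{V}_2) \oplus \mathbf{D}_2) = V_{\mathbb{BZL}}(\mathbf{D}_2 \oplus (\mathbb{V}_1 \cup \mathbb{V}_2) \oplus \mathbf{D}_2)$, which is obviously inside $F(\mathbb{V}_1) \vee F(\mathbb{V}_2)$.

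For (c) and (d) the central tool is the term translation $m(\cdot,\cdot)$ introduced just before the theorem. The crucial intermediate claim I would isolate is: for any BI-lattice identity $t \approx u$ whose terms have nonzero arity and for any $\mathbf{A} \in \mathbb{PKA}$,
\[
\mathbf{A} \models t \approx u \quad\text{iff}\quad \mathbf{D}_2 \oplus \mathbf{A} \oplus \mathbf{D}_2 \models m(t,u) \approx m(u,t).
\]
The disjuncts of the form $(x_i \wedge x_i')^\sim$ force the desired behaviour: under the trivial Brouwer complement of $\mathbf{D}_2 \oplus \mathbf{A} \oplus \mathbf{D}_2$ they evaluate to $1$ as soon as any variable lies in the outer layers (making both sides of the translated equation equal $1$), and they evaluate to $0$ precisely when every variable lies in the middle layer $\mathbf{A}$ and is sharp there. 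Once this translation is established, injectivity and meet-preservation both follow from the fact that the equational theory of $F(\mathbb{V})$ relative to $\mathbb{SAOL}$ is generated by the $m$-translates of the $\mathbb{PKA}$-identities of $\mathbb{V}$: distinct $\mathbb{V}$'s produce distinct translated theories, and the meet $F(\mathbb{V}_1) \cap F(\mathbb{V}_2)$ has equational theory generated by the union of the translates, matching the theory of $F(\mathbb{V}_1 \cap \mathbb{V}_2)$.

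The main obstacle I expect is verifying the faithfulness of the $m$-translation --- i.e., the biconditional displayed above. The forward direction is a case split on which layer each variable sits in, routine but with several subcases; the reverse direction is more delicate because one must argue that valuations landing strictly inside the middle copy of $\mathbf{A}$ reflect the original equation without interference from the guard disjuncts, and it is precisely the nonnullarity assumption on $t$ and $u$ that prevents the guards from vanishing and makes this reflection work. Everything else in the proof is essentially bookkeeping on top of this lemma.
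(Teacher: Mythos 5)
Your attempt cannot be checked against an in-paper argument, because the paper only imports this statement from \cite[Theorem 6.25]{pbz5} without proving it; so I assess it on its own merits. The overall architecture is the right one: the $m$-translation is indeed the key device, join-preservation does follow from Lemma \ref{d2vard2} exactly as you say, bottom-preservation and the claim that the image lies in the filter above $V_{\mathbb{BZL}}(\mathbf{D}_{3})$ are correct, and injectivity does follow from your displayed biconditional (modulo the point that a separating identity must first be rewritten so that both terms have nonzero arity and are constant-free: a term containing $0$ or $1$, evaluated at arguments from the middle layer, can escape the middle layer and break the reflection step). But two genuine gaps remain. First, your stated mechanism for the biconditional is wrong: in $\mathbf{D}_{2}\oplus\mathbf{A}\oplus\mathbf{D}_{2}$ the guard $(v\wedge v^{\prime})^{\sim}$ equals $0$ precisely when $v$ lies in the middle layer, with no sharpness condition at all; if the clause ``and is sharp there'' were really needed, the translation would only reflect $t\approx u$ on the sharp elements of $\mathbf{A}$ and the equivalence with $\mathbf{A}\vDash t\approx u$ would collapse. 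The lemma is true, but not for the reason you give.

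Second, and more seriously, meet-preservation and top-preservation do not follow from the biconditional. The biconditional controls only algebras of the special form $\mathbf{D}_{2}\oplus\mathbf{A}\oplus\mathbf{D}_{2}$, whereas your assertion that ``the equational theory of $F({\mathbb{V}})$ relative to $\mathbb{SAOL}$ is generated by the $m$-translates'' requires the converse inclusion $\{\mathbf{B}\in\mathbb{SAOL}:\mathbf{B}\vDash m(\Sigma)\}\subseteq F({\mathbb{V}})$ for \emph{arbitrary} members of $\mathbb{SAOL}$. The subdirectly irreducible members of $\mathbb{SAOL}$ are the SDM antiortholattices, which need not have ordinal-sum form (they need not even possess an atom), so this converse demands a structural analysis of $\mathrm{HSP}_{U}$ of the ordinal-sum generators --- congruence distributivity plus J\'{o}nsson's Lemma, together with a description of subalgebras and quotients of $\mathbf{D}_{2}\oplus\mathbf{C}\oplus\mathbf{D}_{2}$ --- which is exactly the content of Section 6 of \cite{pbz5} and is nowhere supplied in your sketch. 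The same missing input underlies the ``density fact'' $\mathbb{SAOL}=V_{\mathbb{BZL}}(\mathbf{D}_{2}\oplus\mathbb{PKA}\oplus\mathbf{D}_{2})$, which you invoke for preservation of the top bound but never establish and which is itself a nontrivial theorem. As it stands, only injectivity, join-preservation and bottom-preservation are actually proved; the rest is not bookkeeping.
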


\begin{corollary}
$(V_{\mathbb{BZL} }(\mathbf{D}_{2}\oplus\mathbb{OML} \oplus\mathbf{D}%
_{2}),V_{\mathbb{BZL} }(\mathbf{F}_{8}))$ is a splitting pair in the lattice
of subvarieties of $\mathbb{OL} $.
\end{corollary}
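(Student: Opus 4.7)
The plan is to transport the splitting pair from Lemma \ref{splitol} along the bounded lattice embedding
\[
\Phi : \mathbb{V}\longmapsto V_{\mathbb{BZL}}(\mathbf{D}_{2}\oplus\mathbb{V}\oplus\mathbf{D}_{2})
\]
supplied by Theorem \ref{thegenop}. Since $\mathbb{OL}\subseteq\mathbb{PKA}$, the restriction of $\Phi$ to $\mathrm{Subvar}(\mathbb{OL})$ is still a bounded lattice embedding, now into a sublattice of the principal filter above $V_{\mathbb{BZL}}(\mathbf{D}_{3})$ inside $\mathrm{Subvar}(\mathbb{SAOL})$. This image is, I believe, what the statement means by ``the lattice of subvarieties of $\mathbb{OL}$'' here: the pair displayed in the corollary consists of subvarieties of $\mathbb{SAOL}$ rather than of $\mathbb{OL}$, so the ambient lattice must be the $\Phi$-image of $\mathrm{Subvar}(\mathbb{OL})$.

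First I would compute the images of the two components of the splitting pair of Lemma \ref{splitol}. By definition, $\Phi(\mathbb{OML})=V_{\mathbb{BZL}}(\mathbf{D}_{2}\oplus\mathbb{OML}\oplus\mathbf{D}_{2})$, and Proposition \ref{f8varb6} gives
\[
\Phi(V_{\mathbb{BI}}(\mathbf{B}_{6}))=V_{\mathbb{BZL}}(\mathbf{D}_{2}\oplus V_{\mathbb{BI}}(\mathbf{B}_{6})\oplus\mathbf{D}_{2})=V_{\mathbb{BZL}}(\mathbf{F}_{8}),
\]
so the pair in the conclusion is exactly the $\Phi$-image of the pair in Lemma \ref{splitol}.

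Next I would invoke the standard order-theoretic observation that an order embedding sends splitting pairs in its domain to splitting pairs in its image. Concretely, pick any $X$ in $\Phi(\mathrm{Subvar}(\mathbb{OL}))$ and write $X=\Phi(\mathbb{U})$ for some $\mathbb{U}\in\mathrm{Subvar}(\mathbb{OL})$. By Lemma \ref{splitol}, either $\mathbb{U}\subseteq\mathbb{OML}$ or $V_{\mathbb{BI}}(\mathbf{B}_{6})\subseteq\mathbb{U}$; applying the order embedding $\Phi$ converts this dichotomy into: either $X\subseteq V_{\mathbb{BZL}}(\mathbf{D}_{2}\oplus\mathbb{OML}\oplus\mathbf{D}_{2})$ or $V_{\mathbb{BZL}}(\mathbf{F}_{8})\subseteq X$, which is exactly the splitting-pair condition. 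The fact that the two images are distinct (so we really do have a proper splitting pair) follows because $\Phi$ is an embedding and the original pair is proper.

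There is no real mathematical obstacle here, since everything reduces to the combination of Lemma \ref{splitol}, Proposition \ref{f8varb6}, and the embedding property from Theorem \ref{thegenop}. The only subtle point, which would deserve a brief clarifying sentence in the write-up, is the identification of the ambient lattice in the statement: the corollary is naturally read via the isomorphism between $\mathrm{Subvar}(\mathbb{OL})$ and its $\Phi$-image inside $\mathrm{Subvar}(\mathbb{SAOL})$.
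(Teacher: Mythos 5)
Your proposal is correct and matches the paper's proof exactly: the paper's own argument is just the one-line citation of Lemma \ref{splitol}, Proposition \ref{f8varb6} and Theorem \ref{thegenop}, and you have simply spelled out how these combine (transporting the splitting pair along the lattice embedding of Theorem \ref{thegenop}, with Proposition \ref{f8varb6} identifying the image of $V_{\mathbb{BI}}(\mathbf{B}_{6})$ as $V_{\mathbb{BZL}}(\mathbf{F}_{8})$). Your reading of the ambient lattice as the image of $\mathrm{Subvar}(\mathbb{OL})$ under the embedding is the natural one and is the only way the stated proof goes through, since an embedding transports splitting pairs only to its image.
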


\begin{proof}
By Lemma \ref{splitol}, Proposition \ref{f8varb6} and Theorem \ref{thegenop}.
\end{proof}

\begin{proposition}
\begin{itemize}
\item $V_{\mathbb{BI} }(\mathbf{B}_{6})\subsetneq V_{\mathbb{BI} }%
(\mathbf{F}_{8})=V_{\mathbb{BI} }(\mathbf{D}_{n}\oplus\mathbf{F}_{8}%
\oplus\mathbf{D}_{n})$ for any $n\in{\mathbb{N}} ^{*}$;

\item $V_{\mathbb{BZL} }(\mathbf{F}_{8})\subsetneq V_{\mathbb{BZL}
}(\mathbf{D}_{2}\oplus\mathbf{F}_{8}\oplus\mathbf{D}_{2})=V_{\mathbb{BZL}
}(\mathbf{D}_{n}\oplus\mathbf{F}_{8}\oplus\mathbf{D}_{n})$ for any
$n\in{\mathbb{N}} \setminus\{0,1,2\}$.
\end{itemize}
\end{proposition}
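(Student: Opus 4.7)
The plan is to treat the two bullet points in turn, exploiting subalgebra or homomorphic-image arguments for the easy inclusions and invoking an explicit product embedding together with Lemma~\ref{d2vard2} for the harder ones.

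For the first bullet point, $V_{\mathbb{BI}}(\mathbf{B}_{6})\subseteq V_{\mathbb{BI}}(\mathbf{F}_{8})$ is witnessed by the BI-congruence on $\mathbf{F}_{8}$ with classes $\{0,c\},\{a\},\{b\},\{a^{\prime}\},\{b^{\prime}\},\{c^{\prime},1\}$, whose quotient is $\mathbf{B}_{6}$; strictness follows because $\mathbf{B}_{6}\in\mathbb{OL}$ satisfies $x\vee x^{\prime}\approx 1$, while $\mathbf{F}_{8}$ refutes it at $x=c$. The inclusion $V_{\mathbb{BI}}(\mathbf{F}_{8})\subseteq V_{\mathbb{BI}}(\mathbf{D}_{n}\oplus\mathbf{F}_{8}\oplus\mathbf{D}_{n})$ holds because $\{0,c,a,b,a^{\prime},b^{\prime},c^{\prime},1\}$ (with the outer $0$ and $1$ of the ordinal sum) is a BI-subalgebra isomorphic to $\mathbf{F}_{8}$. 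For the reverse inclusion I would exhibit an explicit BI-embedding $\varphi:\mathbf{D}_{n}\oplus\mathbf{F}_{8}\oplus\mathbf{D}_{n}\hookrightarrow\mathbf{F}_{8}\times\mathbf{D}_{2n}$, mapping the lower chain by $\alpha_{i}\mapsto(\hat{0},\hat{d}_{i})$, the upper by $\beta_{i}\mapsto(\hat{1},\hat{e}_{i})$, and the middle hexagon via $c\mapsto(\hat{c},\hat{d}_{n-1})$, $c^{\prime}\mapsto(\hat{c}^{\prime},\hat{e}_{n-1})$, $a\mapsto(\hat{a},\hat{d}_{n-1})$, $a^{\prime}\mapsto(\hat{a}^{\prime},\hat{e}_{n-1})$, $b\mapsto(\hat{b},\hat{e}_{n-1})$, $b^{\prime}\mapsto(\hat{b}^{\prime},\hat{d}_{n-1})$ (using the indexing $\hat{d}_{0}<\cdots<\hat{d}_{n-1}<\hat{e}_{n-1}<\cdots<\hat{e}_{0}$ of $\mathbf{D}_{2n}$, with $\hat{d}_{i}^{\prime}=\hat{e}_{i}$). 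The cross-pairing that sends $a$ and $b$ to the two involution-dual middle elements $\hat{d}_{n-1},\hat{e}_{n-1}$ is precisely what makes $\varphi(a)\wedge\varphi(b)=(\hat{c},\hat{d}_{n-1})=\varphi(c)$ and $\varphi(a)\vee\varphi(b)=(\hat{c}^{\prime},\hat{e}_{n-1})=\varphi(c^{\prime})$ work out. Since $\mathbf{D}_{3}\in\mathrm{H}_{\mathbb{BI}}(\mathbf{F}_{8})$ by collapsing all six middle elements $\{c,a,b,a^{\prime},b^{\prime},c^{\prime}\}$ into a single self-involutive class, we have $\mathbb{KA}=V_{\mathbb{BI}}(\mathbf{D}_{3})\subseteq V_{\mathbb{BI}}(\mathbf{F}_{8})$, so $\mathbf{D}_{2n}$ and hence $\mathbf{F}_{8}\times\mathbf{D}_{2n}$ both lie in $V_{\mathbb{BI}}(\mathbf{F}_{8})$.

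For the second bullet point, $V_{\mathbb{BZL}}(\mathbf{F}_{8})\subseteq V_{\mathbb{BZL}}(\mathbf{D}_{2}\oplus\mathbf{F}_{8}\oplus\mathbf{D}_{2})$ follows again from the subset $\{0,c,a,b,a^{\prime},b^{\prime},c^{\prime},1\}$, which is now a BZL-subalgebra because both algebras are antiortholattices and the trivial Brouwer complement is preserved. For strictness I use the equation \RV: $\mathbf{F}_{8}=\mathbf{D}_{2}\oplus\mathbf{B}_{6}\oplus\mathbf{D}_{2}$ lies in $V_{\mathbb{BZL}}(\mathbf{D}_{2}\oplus\mathbb{OL}\oplus\mathbf{D}_{2})$, which (as recalled just before Lemma~\ref{d2vard2}) is axiomatised by \RV\ relative to $\mathbb{SAOL}$, whence $\mathbf{F}_{8}\vDash\RV$; evaluating \RV\ in $\mathbf{D}_{2}\oplus\mathbf{F}_{8}\oplus\mathbf{D}_{2}$ at $x=p$ (the element just above $0$, i.e.\ the original bottom of $\mathbf{F}_{8}$) and $y=c$ yields LHS $=p^{\prime}$ and RHS $=c^{\prime}$, distinct since $c^{\prime}<p^{\prime}$. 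For the equality with $n\geq 3$, one inclusion comes from $\mathbf{D}_{2}\oplus\mathbf{F}_{8}\oplus\mathbf{D}_{2}$ being a BZL-subalgebra of $\mathbf{D}_{n}\oplus\mathbf{F}_{8}\oplus\mathbf{D}_{n}$ (keep $0,1$ and any intermediate pair $\alpha_{i},\beta_{i}$ with $1\leq i\leq n-1$); for the other, the BZL clause of Lemma~\ref{d2vard2} with $\mathbb{D}=\{\mathbf{F}_{8}\}$ gives $V_{\mathbb{BZL}}(\mathbf{D}_{2}\oplus\mathbf{F}_{8}\oplus\mathbf{D}_{2})=V_{\mathbb{BZL}}(\mathbf{D}_{2}\oplus V_{\mathbb{BI}}(\mathbf{F}_{8})\oplus\mathbf{D}_{2})$, and by the first bullet point $\mathbf{D}_{n-1}\oplus\mathbf{F}_{8}\oplus\mathbf{D}_{n-1}\in V_{\mathbb{BI}}(\mathbf{F}_{8})$, so $\mathbf{D}_{n}\oplus\mathbf{F}_{8}\oplus\mathbf{D}_{n}=\mathbf{D}_{2}\oplus(\mathbf{D}_{n-1}\oplus\mathbf{F}_{8}\oplus\mathbf{D}_{n-1})\oplus\mathbf{D}_{2}$ belongs to the class $\mathbf{D}_{2}\oplus V_{\mathbb{BI}}(\mathbf{F}_{8})\oplus\mathbf{D}_{2}$ and hence to $V_{\mathbb{BZL}}(\mathbf{D}_{2}\oplus\mathbf{F}_{8}\oplus\mathbf{D}_{2})$.

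The delicate step is the BI inclusion $\mathbf{D}_{n}\oplus\mathbf{F}_{8}\oplus\mathbf{D}_{n}\in V_{\mathbb{BI}}(\mathbf{F}_{8})$: the natural outer-chain collapse only gives $\mathbf{F}_{8}$ as a BI-quotient (the wrong direction), so one must realise $\mathbf{D}_{n}\oplus\mathbf{F}_{8}\oplus\mathbf{D}_{n}$ inside a product via the embedding above. Once this BI-level fact is in hand, Lemma~\ref{d2vard2} does all the BZL work, which is essential since no direct subalgebra or quotient argument could produce the larger $\mathbf{D}_{n}\oplus\mathbf{F}_{8}\oplus\mathbf{D}_{n}$ from $\mathbf{D}_{2}\oplus\mathbf{F}_{8}\oplus\mathbf{D}_{2}$.
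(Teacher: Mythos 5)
Your argument is correct, and it reaches the conclusion by a genuinely more self-contained route than the paper's. The paper's proof is essentially a chain of citations: the $\mathbb{BI}$-equality $V_{\mathbb{BI}}(\mathbf{F}_{8})=V_{\mathbb{BI}}(\mathbf{D}_{n}\oplus\mathbf{F}_{8}\oplus\mathbf{D}_{n})$ comes from Proposition \ref{f8varb6} together with Corollary 6.23 of \cite{pbz5}; the $\mathbb{BI}$-strictness from $V_{\mathbb{BI}}(\mathbf{B}_{6})\subseteq\mathbb{OL}$ versus $\mathbf{D}_{3}\in V_{\mathbb{BI}}(\mathbf{F}_{8})$ (the same separation you phrase via $x\vee x^{\prime}\approx 1$, just with a different witness); and the $\mathbb{BZL}$-strictness from the \emph{injectivity} of the operator ${\mathbb{V}}\mapsto V_{\mathbb{BZL}}(\mathbf{D}_{2}\oplus{\mathbb{V}}\oplus\mathbf{D}_{2})$ of Theorem \ref{thegenop}, applied to $V_{\mathbb{BI}}(\mathbf{B}_{6})\subsetneq V_{\mathbb{BI}}(\mathbf{F}_{8})$. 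You replace the first citation by an explicit $\mathbb{BI}$-embedding of $\mathbf{D}_{n}\oplus\mathbf{F}_{8}\oplus\mathbf{D}_{n}$ into $\mathbf{F}_{8}\times\mathbf{D}_{2n}$ --- I checked the cross-pairing of $a,b$ with the two dual middle elements of $\mathbf{D}_{2n}$, and it does make all meets, joins and involutions come out right, while $\mathbf{D}_{2n}\in\mathbb{KA}=V_{\mathbb{BI}}(\mathbf{D}_{3})\subseteq V_{\mathbb{BI}}(\mathbf{F}_{8})$ follows from Kalman's theorem plus your collapse of the hexagon --- and you replace the injectivity argument by a concrete equational witness: $\mathbf{F}_{8}\vDash\RV$, while $\mathbf{D}_{2}\oplus\mathbf{F}_{8}\oplus\mathbf{D}_{2}$ refutes \RV\ at the old bounds of $\mathbf{F}_{8}$ and at $c$. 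Your handling of $n\geq 3$ via associativity of the ordinal sum and Lemma \ref{d2vard2} is exactly what the paper does implicitly. What your version buys is independence from Corollary 6.23 of \cite{pbz5} and from the injectivity half of Theorem \ref{thegenop}; what it costs is the hands-on verification of the product embedding, which the paper outsources to the cited corollary.
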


\begin{proof}
By Proposition \ref{f8varb6}, the fact that $V_{\mathbb{BI} }(\mathbf{B}%
_{6})\subseteq\mathbb{OL} $, while $\mathbf{D}_{3}\in V_{\mathbb{BI}
}(\mathbf{F}_{8})$, and \cite[Corollary $6.23$]{pbz5}, we get that
$V_{\mathbb{BI} }(\mathbf{B}_{6})\subsetneq V_{\mathbb{BI} }(\mathbf{F}%
_{8})=V_{\mathbb{BI} }(\mathbf{D}_{2}\oplus\mathbf{F}_{8}\oplus\mathbf{D}%
_{2})$ and hence $V_{\mathbb{BI} }(\mathbf{F}_{8})=V_{\mathbb{BI} }%
(\mathbf{D}_{n}\oplus\mathbf{F}_{8}\oplus\mathbf{D}_{n})$ for any
$n\in{\mathbb{N}} ^{*}$. This, Theorem \ref{thegenop} and again Proposition
\ref{f8varb6} show that $V_{\mathbb{BZL} }(\mathbf{F}_{8})\subsetneq
V_{\mathbb{BZL} }(\mathbf{D}_{2}\oplus\mathbf{F}_{8}\oplus\mathbf{D}%
_{2})=V_{\mathbb{BZL} }(\mathbf{D}_{n}\oplus\mathbf{F}_{8}\oplus\mathbf{D}%
_{n})$ for any $n\in{\mathbb{N}} \setminus\{0,1,2\}$.
\end{proof}

\subsection{Covers in the Lattice of Subvarieties of $\mathbb{PBZL}^{\ast}$}

In this subsection, we continue the study of the lattice
$\mathrm{Subvar(\mathbb{PBZL}^{\ast})}$ of subvarieties of PBZ$^{\ast}$
--lattices, started in \cite{GLP1+, PBZ2, rgcmfp, pbzsums, pbz5}. We begin by
recapitulating a few known results.

\begin{lemma}
\begin{enumerate}
\item \label{subvarpbz0} \textrm{\cite[Subsection 5.3]{GLP1+}} $\mathbb{BA}$
is the unique atom of $\mathrm{Subvar(\mathbb{PBZL}^{\ast})}$.

\item \label{subvarpbz3} \textrm{\cite[Theorem 5.4.(2)]{GLP1+}} $\mathbb{BA}%
=\mathbb{OML}\cap V_{\mathbb{BZL} }(\mathbb{AOL})$.

\item \label{subvarpbz1} \textrm{\cite[Corollary 3.6]{BH}} The unique cover of
$\mathbb{BA} $ in the ideal $(\mathbb{OML} ]$ of\linebreak%
$\mathrm{Subvar(\mathbb{PBZL}^{\ast} )}$ is $V_{\mathbb{BZL} }(\mathbf{MO}%
_{2})$.

\item \label{subvarpbz2} \textrm{\cite[Theorem 5.5]{GLP1+}} For any
$\mathbf{L}\in\mathbb{PBZL}^{\ast}\setminus\mathbb{OML}$, we have
$\mathbf{D}_{3}\in{\mathrm{H}}{\mathrm{S}}(\mathbf{L})\subseteq
V_{\mathbb{BZL} }((\mathbf{L}))$, so the unique non--orthomodular cover of
$\mathbb{BA}$ in $\mathrm{Subvar(\mathbb{PBZL}^{\ast})}$ is $V_{\mathbb{BZL}
}(\mathbf{D}_{3})$.
\end{enumerate}

\label{subvarpbz}
\end{lemma}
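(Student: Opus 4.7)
The plan is to treat the four items separately, with the first three falling out of well-known structural facts about $\mathbb{PBZL}^{\ast}$ and $\mathbb{OML}$ and the bulk of the work concentrated in item (iv).

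For item (i), I would first observe that in any nontrivial PBZ$^{\ast}$ --lattice $\mathbf{A}$, the two-element set $\{0^{\mathbf{A}},1^{\mathbf{A}}\}$ is the universe of a subalgebra that is isomorphic, as a PBZ$^{\ast}$ --lattice, to the two-element Boolean algebra $\mathbf{D}_{2}$ (interpret the Brouwer complement as equalling the Kleene complement, as is required on $\mathbb{OML}$). Consequently $\mathbb{BA}=V_{\mathbb{BZL}}(\mathbf{D}_{2})\subseteq V_{\mathbb{BZL}}(\mathbf{A})$ for every nontrivial $\mathbf{A}\in\mathbb{PBZL}^{\ast}$. Since $\mathbf{D}_{2}$ itself has no nontrivial subalgebras, $\mathbb{BA}$ is an atom and it is the only one.

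Item (ii) splits in two. One direction is immediate: $\mathbf{D}_{2}$ is an antiortholattice (its only sharp elements are $0$ and $1$) and a Boolean algebra is orthomodular, whence $\mathbb{BA}\subseteq\mathbb{OML}\cap V_{\mathbb{BZL}}(\mathbb{AOL})$. For the reverse inclusion I would isolate an equation valid on every antiortholattice which, once the orthomodular identification $x^{\prime}\approx x^{\sim}$ is imposed, becomes the distributive law; then the classical identity $\mathbb{OML}\cap\mathbb{DIST}=\mathbb{BA}$ recalled in the preliminaries finishes the argument. Morally, in an antiortholattice the Brouwer complement completely collapses away from the sharp skeleton, and forcing it to coincide with the Kleene complement destroys every $\mathbf{M}_{n}$-configuration. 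Item (iii) is then the Bruns--Harding analysis of $\mathrm{Subvar}(\mathbb{OML})$: non-Boolean orthomodular lattices contain (or generate, via $\mathrm{HS}$) $\mathbf{MO}_{2}$, and $\mathbf{MO}_{2}$ is simple, so $V_{\mathbb{BZL}}(\mathbf{MO}_{2})$ is the unique atom of $(\mathbb{OML}]$ above $\mathbb{BA}$.

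The main obstacle, and the step I would devote the most care to, is item (iv). Given $\mathbf{L}\in\mathbb{PBZL}^{\ast}\setminus\mathbb{OML}$, orthomodularity fails on $\mathbf{L}$, so there is some non-sharp $a\in L$, i.e., $a\vee a^{\prime}<1$. Setting $c=a\wedge a^{\prime}$ and $c^{\prime}=a\vee a^{\prime}$ we get $c<c^{\prime}$ with $c^{\prime}$ the Kleene complement of $c$; together with $0$ and $1$ this already produces a four-element Kleene-chain sub-BI-lattice of $\mathbf{L}$. The plan is then to produce a congruence on this subalgebra (equivalently, on a suitable cover of it inside $\mathbf{L}$) that collapses the interval $[c,c^{\prime}]$ to a single point, yielding a quotient isomorphic to $\mathbf{D}_{3}$, the three-element chain whose middle element is a Kleene fixpoint and has trivial Brouwer complement. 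The nontrivial verification is that such an identification is simultaneously compatible with $\wedge$, $\vee$, $\cdot^{\prime}$ and $\cdot^{\sim}$, which uses paraorthomodularity and $(\ast)$ to control what happens on elements outside the interval.

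Finally, once $\mathbf{D}_{3}\in\mathrm{HS}(\mathbf{L})\subseteq V_{\mathbb{BZL}}(\mathbf{L})$ is established for every $\mathbf{L}\in\mathbb{PBZL}^{\ast}\setminus\mathbb{OML}$, I would conclude as follows: $\mathbf{D}_{3}$ is not orthomodular, since on its middle element $a^{\prime}=a\neq 0=a^{\sim}$, so $V_{\mathbb{BZL}}(\mathbf{D}_{3})\nsubseteq\mathbb{OML}$; any non-orthomodular subvariety of $\mathbb{PBZL}^{\ast}$ contains some non-orthomodular $\mathbf{L}$ and hence contains $V_{\mathbb{BZL}}(\mathbf{D}_{3})$, so $V_{\mathbb{BZL}}(\mathbf{D}_{3})$ is below every non-orthomodular subvariety and strictly above $\mathbb{BA}$; by item (i) it must be the unique non-orthomodular cover of $\mathbb{BA}$.
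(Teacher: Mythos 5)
The paper offers no internal proof of this lemma: all four items are imported results, and the citations to \cite{GLP1+} and \cite{BH} are the paper's entire justification. Measured against that, your item (i) is a correct and essentially complete standard argument ($\{0,1\}$ is a subuniverse of every nontrivial PBZ$^{\ast}$ --lattice isomorphic to the two-element Boolean algebra, since $1^{\sim}=0$ and $1\leq 1^{\sim\sim}=0^{\sim}$ force $0^{\sim}=1$; hence $\mathbb{BA}=V_{\mathbb{BZL}}(\mathbf{D}_{2})$ lies below every nontrivial subvariety). Item (iii) is, like the paper, a restatement of Bruns--Harding rather than a proof, which is acceptable. For item (ii) you never exhibit the promised equation; for the record, \mbox{\textup{J0}}\ works: it holds throughout $V_{\mathbb{BZL}}(\mathbb{AOL})$ and under $x^{\prime}\approx x^{\sim}$ it becomes $(x\wedge y^{\prime})\vee(x\wedge y)\approx x$, which says all pairs commute, forcing an orthomodular lattice to be Boolean.

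The genuine gap is in item (iv), exactly where you announce the most care. First, two setup points are skipped: that $c=a\wedge a^{\prime}>0$ for non-sharp $a$ is not automatic --- it needs $(\ast)$ together with $a^{\sim}\leq a^{\prime}$ and $a^{\prime\sim}\leq a$ (if $a\wedge a^{\prime}=0$ then $1=(a\wedge a^{\prime})^{\sim}=a^{\sim}\vee a^{\prime\sim}\leq a\vee a^{\prime}$, so $a$ would be sharp); and the degenerate case $a=a^{\prime}$, where your chain has only three elements (although there $\{0,a,1\}$ is already a $\mathbf{D}_{3}$-subalgebra, since $a^{\sim}\leq a^{\prime}=a$ and $a\wedge a^{\sim}=0$ give $a^{\sim}=0$). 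Much more seriously, the set $\{0,c,c^{\prime},1\}$ is in general \emph{not} closed under $^{\sim}$ --- in $\mathbf{D}_{4}\times\mathbf{D}_{2}$ with $a=(d,0)$ one gets $c^{\sim}=(0,1)\notin\{0,c,c^{\prime},1\}$ --- so it carries no BZ-congruences, and the whole content of \cite[Theorem 5.5]{GLP1+} is precisely the choice of the ``suitable cover'' and the verification that collapsing $[c,c^{\prime}]$ generates a congruence, compatible with $^{\sim}$, whose quotient is $\mathbf{D}_{3}$ and not something that collapses further. You label this ``the nontrivial verification'' and then do not perform it, so what you actually establish is only that $\mathbf{D}_{3}$ arises as a bounded involution lattice, not that $\mathbf{D}_{3}\in{\mathrm{H}}{\mathrm{S}}(\mathbf{L})$ in the full BZ-signature, which is what the inclusion $V_{\mathbb{BZL}}(\mathbf{D}_{3})\subseteq V_{\mathbb{BZL}}(\mathbf{L})$ requires. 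Your final deduction from that inclusion to the cover statement is correct, though it tacitly invokes item (ii) to exclude orthomodular varieties strictly between $\mathbb{BA}$ and $V_{\mathbb{BZL}}(\mathbf{D}_{3})$; it would be worth making that dependence explicit.
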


By the above, in $\mathrm{Subvar(\mathbb{PBZL}^{\ast})}$ $V_{\mathbb{BZL}%
}(\mathbf{MO}_{2})$ and $V_{\mathbb{BZL}}(\mathbf{D}_{3})$ are the only covers
of $\mathbb{BA}$, and $\mathbb{OML}\vee V_{\mathbb{BZL}}(\mathbf{D}_{3})$ is
the unique cover of $\mathbb{OML}$.

\begin{lemma}
\textrm{\cite[Lemma 3.3.(1)]{PBZ2}} All subdirectly irreducible members
of\linebreak$V_{\mathbb{BZL} }(\mathbb{AOL} )$ belong to $\mathbb{AOL}
$.\label{sdirr}
\end{lemma}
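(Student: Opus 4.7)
The plan is to combine Jónsson's lemma with the closure properties of $\mathbb{AOL}$ under the three relevant class operators. Since every PBZ$^{\ast}$ --lattice has a bounded lattice reduct, the variety $\mathbb{PBZL}^{\ast}$ is congruence distributive, and hence so is its subvariety $V_{\mathbb{BZL}}(\mathbb{AOL})$. Jónsson's lemma therefore yields $Si(V_{\mathbb{BZL}}(\mathbb{AOL})) \subseteq \mathrm{H}\mathrm{S}\mathrm{P}_{U}(\mathbb{AOL})$, where $\mathrm{P}_{U}$ denotes the ultraproduct class operator. It will then suffice to show $\mathrm{H}\mathrm{S}\mathrm{P}_{U}(\mathbb{AOL}) \subseteq \mathbb{AOL}$, which I would do by verifying that $\mathrm{P}_{U}$, $\mathrm{S}$ and $\mathrm{H}$ each preserve $\mathbb{AOL}$.

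The closure under $\mathrm{P}_{U}$ follows from the fact, recalled in Section \ref{bari}, that $\mathbb{AOL}$ is a universal class, and hence closed under ultraproducts by \L{}o\'{s}'s theorem. The closure under $\mathrm{S}$ is equally direct: sharpness is the one-variable equational property $x \vee x^{\prime} = 1$, so for any BZ--lattice subalgebra $\mathbf{B} \leq \mathbf{A} \in \mathbb{AOL}$ one has $S(\mathbf{B}) = S(\mathbf{A}) \cap B = \{0,1\}$, whence $\mathbf{B} \in \mathbb{AOL}$.

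The closure under $\mathrm{H}$ is the only step requiring a short direct verification, and is where I locate the main (if mild) obstacle. Given $\mathbf{A} \in \mathbb{AOL}$ and $\theta \in \mathrm{Con}_{\mathbb{BZL}}(\mathbf{A})$, suppose $[a] \in S(\mathbf{A}/\theta)$ with $[a] \neq [0]$. Then $0 \notin [a]$, so in particular $a \neq 0$ in $\mathbf{A}$; the trivial Brouwer complement of the antiortholattice $\mathbf{A}$ then gives $a^{\sim} = 0$, and hence $[a]^{\sim} = [0]$. The PBZ$^{\ast}$ characterisation of sharpness as $[a]^{\sim} = [a]^{\prime}$ now yields $[a^{\prime}] = [0]$, i.e.\ $a^{\prime}\,\theta\,0$; applying the involution gives $a\,\theta\,1$, so $[a] = [1]$. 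Consequently $S(\mathbf{A}/\theta) \subseteq \{[0],[1]\}$, and $\mathbf{A}/\theta$ is an antiortholattice (with the trivial quotient admitted vacuously).

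Chaining the three closures gives $\mathrm{H}\mathrm{S}\mathrm{P}_{U}(\mathbb{AOL}) \subseteq \mathbb{AOL}$, and Jónsson's lemma then delivers the desired $Si(V_{\mathbb{BZL}}(\mathbb{AOL})) \subseteq \mathbb{AOL}$.
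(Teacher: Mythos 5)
Your proof is correct. Note that the paper itself offers no proof of this lemma --- it is imported verbatim from \cite[Lemma 3.3.(1)]{PBZ2} --- so there is nothing to compare against except the standard argument, which is exactly what you give: congruence distributivity of $\mathbb{PBZL}^{\ast}$ (via the lattice reduct), J\'{o}nsson's lemma, and closure of $\mathbb{AOL}$ under $\mathrm{H}$, $\mathrm{S}$ and $\mathrm{P}_{U}$. All three closure verifications are sound; the only step with real content is closure under $\mathrm{H}$, and you handle it correctly by combining the trivial Brouwer complement of an antiortholattice with the characterisation $S(\mathbf{L})=\{a\in L:a^{\prime}=a^{\sim}\}$ valid in any PBZ$^{\ast}$ --lattice, which forces any sharp class other than $[0]$ to equal $[1]$ in the quotient. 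One could quibble that trivial quotients must be admitted into $\mathbb{AOL}$ by convention for the $\mathrm{H}$-closure to be literally true, but you flag this yourself, and it does not affect the conclusion about subdirectly irreducible members.
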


\begin{lemma}
\textrm{\cite{pbz5}}\label{smallaolchains}

\begin{enumerate}
\item \label{smallaolchains1} $\mathbb{BA} =\mathbb{OML} \cap V_{\mathbb{BZL}
}(\mathbb{AOL} )=V_{\mathbb{BZL} }(\mathbf{D}_{2})\subsetneq V_{\mathbb{BZL}
}(\mathbf{D}_{3})\subsetneq V_{\mathbb{BZL} }(\mathbf{D}_{4})\subsetneq
V_{\mathbb{BZL} }(\mathbf{D}_{5})$.

\item \label{smallaolchains2} $Si(V_{\mathbb{BZL}}(\mathbf{D}_{3}%
))=V_{\mathbb{BZL}}(\mathbf{D}_{3})\cap\mathbb{AOL}={\mathrm{I}}%
_{\mathbb{BZL}}(\{\mathbf{D}_{1},\mathbf{D}_{2},\mathbf{D}_{3}\})$.
\end{enumerate}
\end{lemma}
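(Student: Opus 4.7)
The plan is to combine J\'onsson's lemma (available since $\mathbb{BZL}$ is congruence distributive via its lattice reduct) with Lemma~\ref{subvarpbz} and Lemma~\ref{sdirr}, together with a direct case analysis of the small chains $\mathbf{D}_n$.

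For part~(\ref{smallaolchains1}), I would first observe that $\mathbb{BA} = \mathbb{OML} \cap V_{\mathbb{BZL}}(\mathbb{AOL})$ is Lemma~\ref{subvarpbz}(\ref{subvarpbz3}) and that $\mathbb{BA} = V_{\mathbb{BZL}}(\mathbf{D}_2)$ is classical (in $\mathbf{D}_2$ the Kleene and Brouwer complements coincide, so $\mathbf{D}_2$ is the two--element Boolean algebra). Next, for each $n \in \{2,3,4\}$, the inclusion $V_{\mathbb{BZL}}(\mathbf{D}_n) \subseteq V_{\mathbb{BZL}}(\mathbf{D}_{n+1})$ would be obtained by exhibiting $\mathbf{D}_n \in {\mathrm{H}}{\mathrm{S}}(\mathbf{D}_{n+1})$: the subalgebra $\{0,1\}$ of $\mathbf{D}_3$ is isomorphic to $\mathbf{D}_2$; the $\mathbb{BZL}$--congruence on $\mathbf{D}_4$ identifying its two middle elements yields the quotient $\mathbf{D}_3$; and deleting the unique $\cdot^{\prime}$--fixed middle element of $\mathbf{D}_5$ produces a symmetric $4$--element subalgebra isomorphic to $\mathbf{D}_4$. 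For strictness, direct inspection yields that each $\mathbf{D}_n$ is subdirectly irreducible in $\mathbb{BZL}$ (any identification involving $0$ or $1$ is forced to become universal after applying the Brouwer complement, so the monolith merely collapses all non--extremal elements) and that every $\mathbf{B} \in {\mathrm{H}}{\mathrm{S}}(\mathbf{D}_n)$ satisfies $|B| \leq n$. Hence $\mathbf{D}_{n+1} \in V_{\mathbb{BZL}}(\mathbf{D}_n)$ would, via J\'onsson's lemma, force $\mathbf{D}_{n+1} \in {\mathrm{H}}{\mathrm{S}}(\mathbf{D}_n)$, contradicting $|D_{n+1}| = n+1$.

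For part~(\ref{smallaolchains2}), the inclusion $Si(V_{\mathbb{BZL}}(\mathbf{D}_3)) \subseteq V_{\mathbb{BZL}}(\mathbf{D}_3) \cap \mathbb{AOL}$ follows at once from Lemma~\ref{sdirr}, since $V_{\mathbb{BZL}}(\mathbf{D}_3) \subseteq V_{\mathbb{BZL}}(\mathbb{AOL})$. For the crucial inclusion $V_{\mathbb{BZL}}(\mathbf{D}_3) \cap \mathbb{AOL} \subseteq {\mathrm{I}}_{\mathbb{BZL}}(\{\mathbf{D}_1,\mathbf{D}_2,\mathbf{D}_3\})$, I would take an antiortholattice $\mathbf{A} \in V_{\mathbb{BZL}}(\mathbf{D}_3)$. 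By Birkhoff's subdirect representation together with J\'onsson's lemma and the computation ${\mathrm{H}}{\mathrm{S}}(\mathbf{D}_3) = {\mathrm{I}}_{\mathbb{BZL}}(\{\mathbf{D}_1,\mathbf{D}_2,\mathbf{D}_3\})$ noted above, $\mathbf{A}$ embeds as a subalgebra of some power $\mathbf{D}_3^I$. A componentwise computation with the involution and the trivial Brouwer complement then forces every element of $A \setminus \{\vec 0, \vec 1\}$ to have all its coordinates equal to the middle element $a$ of $\mathbf{D}_3$: any mixed profile would produce a non--trivial sharp tuple of $\mathbf{A}$ (by applying $\cdot^{\sim}$ directly, or first $\cdot^{\prime}$ and then $\cdot^{\sim}$), contradicting the antiortholattice hypothesis. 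Therefore $\mathbf{A}$ consists of at most the three constant tuples $\vec 0, \vec a, \vec 1$ and is isomorphic to $\mathbf{D}_1$, $\mathbf{D}_2$, or $\mathbf{D}_3$. The converse inclusion ${\mathrm{I}}_{\mathbb{BZL}}(\{\mathbf{D}_1,\mathbf{D}_2,\mathbf{D}_3\}) \subseteq Si(V_{\mathbb{BZL}}(\mathbf{D}_3))$ follows by direct verification that each of the three chains is subdirectly irreducible (or trivial).

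The main technical step is the componentwise analysis of antiortholattice subalgebras of $\mathbf{D}_3^I$: combining the constraints imposed by the Kleene involution and by the trivial Brouwer complement requires careful bookkeeping to conclude that every non--extremal element has all coordinates equal to the middle of $\mathbf{D}_3$. Once this is in place, the rest of the argument is routine.
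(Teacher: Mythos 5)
The paper does not prove this lemma at all: it is imported verbatim from \cite{pbz5}, so there is no in-paper argument to compare yours against. Judged on its own terms, your reconstruction is correct and complete. The generating facts you use are all available ($\mathbb{BA}=\mathbb{OML}\cap V_{\mathbb{BZL}}(\mathbb{AOL})$ is Lemma \ref{subvarpbz}(\ref{subvarpbz3}); $\mathbb{BZL}$ is congruence-distributive via its lattice reduct, so J\'onsson's lemma applies and, the generators being finite, collapses $\mathrm{H}\mathrm{S}\mathrm{P}_{U}$ to $\mathrm{H}\mathrm{S}$); the exhibited witnesses for $\mathbf{D}_{n}\in{\mathrm{H}}{\mathrm{S}}(\mathbf{D}_{n+1})$ all check out (the partition $\{0\},\{a,a'\},\{1\}$ of $\mathbf{D}_{4}$ is indeed compatible with $\cdot^{\prime}$ and $\cdot^{\sim}$), and the subdirect irreducibility of each $\mathbf{D}_{n}$ follows as you say because any congruence identifying $0$ or $1$ with a middle element becomes universal after applying $\cdot^{\sim}$ or $\cdot^{\prime}$. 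The heart of part (ii) --- that a subalgebra of $\mathbf{D}_{3}^{I}$ with no nontrivial sharp elements can only contain the constant tuples $\vec{0},\vec{a},\vec{1}$, since $x^{\sim}$ and $(x^{\prime})^{\sim}$ are always sharp and detect any mixed coordinate profile --- is exactly the right mechanism. Two small points worth making explicit if you write this up: when passing from the Birkhoff subdirect representation to an embedding into $\mathbf{D}_{3}^{I}$ you should note that factors isomorphic to $\mathbf{D}_{2}$ embed into $\mathbf{D}_{3}$ and that an all-trivial family just gives $\mathbf{A}\cong\mathbf{D}_{1}$; and the presence of $\mathbf{D}_{1}$ in the stated equality shows the paper's convention counts trivial algebras among $Si$, which your parenthetical \textquotedblleft or trivial\textquotedblright\ quietly accommodates and should be stated as a convention rather than left implicit.
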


We now prove the main result of this subsection.

\begin{theorem}
The only cover of $V_{\mathbb{BZL}}(\mathbf{D}_{3})$ in
$\mathrm{Subvar(\mathbb{PBZL}^{\ast})}$ included in\linebreak$V_{\mathbb{BZL}%
}(\mathbb{AOL})$ is $V_{\mathbb{BZL}}(\mathbf{D}_{4})$.
\end{theorem}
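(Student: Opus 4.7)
The plan is to prove two things: first, that $V_{\mathbb{BZL}}(\mathbf{D}_4)$ itself covers $V_{\mathbb{BZL}}(\mathbf{D}_3)$ in $\mathrm{Subvar}(\mathbb{PBZL}^{\ast})$ (and lies in $V_{\mathbb{BZL}}(\mathbb{AOL})$, which is immediate since $\mathbf{D}_4$ is an antiortholattice); second, that any cover of $V_{\mathbb{BZL}}(\mathbf{D}_3)$ sitting inside $V_{\mathbb{BZL}}(\mathbb{AOL})$ already contains $\mathbf{D}_4$, so by the cover hypothesis it must equal $V_{\mathbb{BZL}}(\mathbf{D}_4)$.

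For the first claim, since $\mathbb{BZL}$ is congruence distributive (it has a lattice reduct), J\'onsson's lemma applied to the finite algebra $\mathbf{D}_4$ gives that, up to isomorphism, $Si(V_{\mathbb{BZL}}(\mathbf{D}_4))\subseteq {\mathrm{H}}_{\mathbb{BZL}}{\mathrm{S}}_{\mathbb{BZL}}(\mathbf{D}_4)$. A quick inspection shows the only BZ-subalgebras of $\mathbf{D}_4=\{0<a<a'<1\}$ are $\{0,1\}$ and $\mathbf{D}_4$ itself (the three-element set $\{0,a,1\}$ is not closed under the Kleene complement), and $\mathbf{D}_4$ has a unique non-trivial proper BZL-congruence, the one collapsing $\{a,a'\}$, with quotient $\mathbf{D}_3$. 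Hence $Si(V_{\mathbb{BZL}}(\mathbf{D}_4))\subseteq\{\mathbf{D}_1,\mathbf{D}_2,\mathbf{D}_3,\mathbf{D}_4\}$, and together with Lemma \ref{smallaolchains}(\ref{smallaolchains2}) this rules out any subvariety strictly between $V_{\mathbb{BZL}}(\mathbf{D}_3)$ and $V_{\mathbb{BZL}}(\mathbf{D}_4)$.

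For the second claim, take a cover $\mathbb{V}$ of $V_{\mathbb{BZL}}(\mathbf{D}_3)$ with $\mathbb{V}\subseteq V_{\mathbb{BZL}}(\mathbb{AOL})$, and pick an SI algebra $\mathbf{A}\in \mathbb{V}\setminus V_{\mathbb{BZL}}(\mathbf{D}_3)$. Lemma \ref{sdirr} gives $\mathbf{A}\in\mathbb{AOL}$, and Lemma \ref{smallaolchains}(\ref{smallaolchains2}) excludes $\mathbf{A}\cong \mathbf{D}_1,\mathbf{D}_2,\mathbf{D}_3$; since any pseudo-Kleene algebra of cardinality at most three is one of these (for each such cardinality there is a unique bounded lattice and in each case the Kleene complement is forced), we have $|A|\geq 4$. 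The proof then reduces to the structural claim that $\mathbf{D}_4$ embeds as a BZ-subalgebra into any antiortholattice with at least four elements; granting this, $V_{\mathbb{BZL}}(\mathbf{D}_4)\subseteq V_{\mathbb{BZL}}(\mathbf{A})\subseteq \mathbb{V}$, whence the cover hypothesis yields $\mathbb{V}=V_{\mathbb{BZL}}(\mathbf{D}_4)$.

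The main step is proving this structural claim. First, an antiortholattice has at most one self-involutive element: if $x=x'$ and $y=y'$, the pseudo-Kleene inequality $x\wedge x'\leq y\vee y'$ gives $x\leq y$ and symmetrically $y\leq x$. Since $|A\setminus\{0,1\}|\geq 2$, one can pick $x\in A\setminus\{0,1\}$ with $x\neq x'$. Put $c=x\wedge x'$; by De Morgan $c'=x\vee x'$. The antiortholattice condition excludes $c=0$, for then $c'=1$ and $x$ would be sharp, contradicting $x\notin\{0,1\}$; and $c=c'$ would force $x=x'$, contradicting the choice of $x$. Therefore $0<c<c'<1$, and since the Brouwer complement is trivial on every antiortholattice, $\{0,c,c',1\}$ is closed under all BZ-operations and is isomorphic to $\mathbf{D}_4$, as required.
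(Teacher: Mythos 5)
Your proof is correct and its decisive step is exactly the paper's: take a subdirectly irreducible witness of a proper extension of $V_{\mathbb{BZL}}(\mathbf{D}_{3})$ inside $V_{\mathbb{BZL}}(\mathbb{AOL})$, use Lemmas \ref{sdirr} and \ref{smallaolchains}.(\ref{smallaolchains2}) to see it is an antiortholattice with more than three elements, and exhibit $\mathbf{D}_{4}$ as the subalgebra $\{0,a\wedge a^{\prime},a\vee a^{\prime},1\}$ for a non-self-involutive $a\notin\{0,1\}$ (your justification that such an $a$ exists, and that $0<a\wedge a^{\prime}<a\vee a^{\prime}<1$, fills in details the paper leaves implicit). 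The only divergence is organisational: your separate J\'onsson-lemma verification that $V_{\mathbb{BZL}}(\mathbf{D}_{4})$ is a cover is dispensable, because the embedding argument already shows that \emph{every} subvariety of $V_{\mathbb{BZL}}(\mathbb{AOL})$ properly containing $V_{\mathbb{BZL}}(\mathbf{D}_{3})$ contains $V_{\mathbb{BZL}}(\mathbf{D}_{4})$, which together with Lemma \ref{smallaolchains}.(\ref{smallaolchains1}) and the convexity of $\mathrm{Subvar}(V_{\mathbb{BZL}}(\mathbb{AOL}))$ in $\mathrm{Subvar}(\mathbb{PBZL}^{\ast})$ yields existence and uniqueness of the cover in one stroke, as the paper does.
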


\begin{proof}
For any subvariety ${\mathbb{W}}$ of $V_{\mathbb{BZL}}(\mathbb{AOL})$ such
that $V_{\mathbb{BZL}}(\mathbf{D}_{3})\subsetneq{\mathbb{W}}$, there exists an
$\mathbf{A}\in Si({\mathbb{W}})\setminus Si(V_{\mathbb{BZL}}(\mathbf{D}%
_{3}))=({\mathbb{W}}\cap\mathbb{AOL})\setminus{\mathrm{I}}_{\mathbb{BZL}%
}(\{\mathbf{D}_{1},\mathbf{D}_{2},\mathbf{D}_{3}\})$ by Lemma \ref{sdirr} and
Lemma \ref{smallaolchains}.(\ref{smallaolchains2}), thus $\mathbf{A}$ is an
antiortholattice with $|A|>3$. Hence, there exists an $a\in A\setminus
\{0,1\}=A\setminus S_{\mathbb{BZL}}(\mathbf{A})$ with $a\neq a^{\prime}$, so
that $0<a\wedge a^{\prime}<a\vee a^{\prime}<1$. Therefore $\{0,a\wedge
a^{\prime},a\vee a^{\prime},1\}$ is the universe of a subalgebra of
$\mathbf{A}$ isomorphic to $\mathbf{D}_{4}$, i.e. $\mathbf{D}_{4}%
\in{\mathrm{S}}_{\mathbb{BZL}}(\mathbf{A})$, thus $V_{\mathbb{BZL}}%
(\mathbf{D}_{4})\subseteq V_{\mathbb{BZL}}(\mathbf{A})\subseteq{\mathbb{W}}$.
Since $V_{\mathbb{BZL}}(\mathbf{D}_{3})\subsetneq V_{\mathbb{BZL}}%
(\mathbf{D}_{4})$ by Lemma \ref{smallaolchains}.(\ref{smallaolchains1}), it
follows that $V_{\mathbb{BZL}}(\mathbf{D}_{4})$ is the only cover of
$V_{\mathbb{BZL}}(\mathbf{D}_{3})$ in $\mathrm{Subvar(V_{\mathbb{BZL}%
}(\mathbb{AOL}))}$, which is, of course, a convex sublattice of
$\mathrm{Subvar(\mathbb{PBZL}^{\ast})}$, thus $V_{\mathbb{BZL}}(\mathbf{D}%
_{4})$ is a cover of $V_{\mathbb{BZL}}(\mathbf{D}_{3})$ in
$\mathrm{Subvar(\mathbb{PBZL}^{\ast})}$.
\end{proof}

It remains open to determine whether $V_{\mathbb{BZL} }(\mathbf{D}_{4})$ is
the only cover of $V_{\mathbb{BZL} }(\mathbf{D}_{3})$ in
$\mathrm{Subvar(\mathbb{PBZL}^{\ast})}$. Recall, also, that $V_{\mathbb{BZL}
}(\mathbf{D}_{5})=\mathbb{SDM}\cap\mathbb{DIST}$ contains all antiortholattice
chains, i.e., all PBZ$^{\ast}$ --chains.

\begin{example}
Let us consider the following example of a PBZ$^{\ast}$ --lattice from
\cite{rgcmfp}:%

\begin{center}\begin{picture}(40,82)(0,0)
\put(-60,70){${\bf H}:$}
\put(20,0){\circle*{3}}
\put(20,40){\circle*{3}}
\put(20,80){\circle*{3}}
\put(20,80){\circle*{3}}
\put(-60,40){\circle*{3}}
\put(100,40){\circle*{3}}
\put(-20,40){\circle*{3}}
\put(60,40){\circle*{3}}
\put(0,60){\circle*{3}}
\put(-20,60){\circle*{3}}
\put(40,60){\circle*{3}}
\put(60,60){\circle*{3}}
\put(0,20){\circle*{3}}
\put(-20,20){\circle*{3}}
\put(40,20){\circle*{3}}
\put(60,20){\circle*{3}}
\put(-27,12){$d$}
\put(4,18){$e$}
\put(4,59){$f^{\prime }$}
\put(30,56){$e^{\prime }$}
\put(-26,61){$g^{\prime }$}
\put(62,60){$d^{\prime }$}
\put(30,16){$f$}
\put(62,17){$g$}
\put(18,-9){$0$}
\put(18,83){$1$}
\put(-50,38){$f^{\sim }\!\!=b$}
\put(26,38){$c=c^{\prime }$}
\put(62,37){$b^{\prime }\!=e^{\sim }$}
\put(103,37){$a^{\prime }\!=d^{\sim }$}
\put(-90,37){$g^{\sim }\!\!=a$}
\put(20,0){\line(-2,1){80}}
\put(20,0){\line(2,1){80}}
\put(20,0){\line(-1,1){40}}
\put(20,0){\line(1,1){40}}
\put(20,80){\line(-2,-1){80}}
\put(20,80){\line(2,-1){80}}
\put(20,80){\line(-1,-1){40}}
\put(20,80){\line(1,-1){40}}
\put(0,60){\line(1,-1){40}}
\put(-20,60){\line(2,-1){80}}
\put(40,60){\line(-1,-1){40}}
\put(60,60){\line(-2,-1){80}}
\end{picture}\end{center}

Note that:

\begin{itemize}
\item $\mathbf{H}\vDash\{SDM,SK\}$, thus $\mathbb{OML} \vee V_{\mathbb{BZL}
}(\mathbf{H})\vDash\{SDM,SK\}$ since $\mathbb{OML} \vDash\{SDM,SK\}$;

\item $\mathbf{H}\nvDash J2$, thus $\mathbf{H}\notin\mathbb{OML} \vee
V_{\mathbb{BZL} }(\mathbb{AOL} )\vDash J2$, in particular $\mathbf{H}%
\notin\mathbb{OML} \vee V_{\mathbb{BZL} }(\mathbf{D}_{3})$;

\item $\mathbf{D}_{3}\in{\mathrm{S}} (\mathbf{H})$, hence $\mathbb{OML} \vee
V_{\mathbb{BZL} }(\mathbf{D}_{3})\subseteq\mathbb{OML} \vee V_{\mathbb{BZL}
}(\mathbf{H})$, therefore $\mathbb{OML} \vee V_{\mathbb{BZL} }(\mathbf{D}%
_{3})\subsetneq\mathbb{OML} \vee V_{\mathbb{BZL} }(\mathbf{H})$ by the above;

\item since $\mathbb{OML} \vee V_{\mathbb{BZL} }(\mathbf{H})\vDash SK$ and
$\mathbf{D}_{4}\nvDash SK$, we have $\mathbf{D}_{4}\notin\mathbb{OML} \vee
V_{\mathbb{BZL} }(\mathbf{H})$, hence $\mathbf{D}_{4}$ does not belong to
every proper supervariety of $\mathbb{OML} \vee V_{\mathbb{BZL} }%
(\mathbf{D}_{3})$.
\end{itemize}

$\mathbf{H}\vDash\{SDM,SK\}$, $\mathbf{H}\nvDash J2$ and $\mathbb{OML}\vee
V_{\mathbb{BZL} }(\mathbb{AOL})\vDash J2$, hence $\mathbf{H}\in(\mathbb{SDM}%
\cap\mathbb{SK})\setminus(\mathbb{OML}\vee V_{\mathbb{BZL} }(\mathbb{AOL}))$,
thus $\mathbb{SDM}\cap\mathbb{SK}\nsubseteq\mathbb{OML}\vee V_{\mathbb{BZL}
}(\mathbb{AOL})$. $\mathbb{AOL}\nvDash SDM$ and $\mathbb{AOL}\nvDash SK$, thus
$\mathbb{AOL}\nsubseteq\mathbb{SDM}$ and $\mathbb{AOL}\nsubseteq\mathbb{SK}$,
in particular $\mathbb{OML}\vee V_{\mathbb{BZL} }(\mathbb{AOL})\nsubseteq
\mathbb{SDM}\cap\mathbb{SK}$. Therefore $\mathbb{SDM}\cap\mathbb{SK}%
||\mathbb{OML}\vee V_{\mathbb{BZL} }(\mathbb{AOL})$. Now let ${\mathbb{V}%
}=V_{\mathbb{BZL} }({MO}_{2})\vee V_{\mathbb{BZL} }(\mathbf{D}_{3}%
)\subseteq\mathbb{SDM}\cap\mathbb{SK}$. $\mathbf{D}_{3}\notin\mathbb{OML}$,
thus ${\mathbb{V}}\nsubseteq\mathbb{OML}$. ${MO}_{2}\notin V_{\mathbb{BZL}
}(\mathbb{AOL})$, thus ${\mathbb{V}}\nsubseteq V_{\mathbb{BZL} }%
(\mathbb{AOL})$. Finally, ${\mathbb{V}}$ satisfies the modular law, while both
$\mathbb{OML}$ and $V_{\mathbb{BZL} }(\mathbb{AOL})$ fail it, hence
$\mathbb{OML}\nsubseteq{\mathbb{V}}$ and $V_{\mathbb{BZL} }(\mathbb{AOL}%
)\nsubseteq{\mathbb{V}}$. Therefore $\mathbb{OML}||{\mathbb{V}}%
||V_{\mathbb{BZL} }(\mathbb{AOL})$.
\end{example}

We list hereafter a few problems that remain open at the time of writing:

\begin{itemize}
\item Is $\mathbb{OML}\vee V_{\mathbb{BZL} }(\mathbf{D}_{4})$ a successor of
$\mathbb{OML}\vee V_{\mathbb{BZL} }(\mathbf{D}_{3})$ in
$\mathrm{Subvar(\mathbb{PBZL}^{\ast})}$? Is it its only successor?

\item Is $\mathrm{Subvar(\mathbb{PBZL}^{\ast})}$ strongly atomic? If so, then
$\mathbb{OML}\vee V_{\mathbb{BZL} }(\mathbf{H})$ includes a successor of
$\mathbb{OML}\vee V_{\mathbb{BZL} }(\mathbf{D}_{3})$ which differs from
$\mathbb{OML}\vee V_{\mathbb{BZL} }(\mathbf{D}_{4})$.
\end{itemize}

\subsection{Subdirect Products and Varieties of PBZ$^{\ast}$ --lattices}

Let ${\mathbb{V}}$ and ${\mathbb{W}}$ be varieties of the same type.
Obviously, if ${\mathbb{V}}$ and ${\mathbb{W}}$ are incomparable, then there
exist $\mathbf{A}\in({\mathbb{V}}\vee{\mathbb{W}})\setminus{\mathbb{V}}$ and
$\mathbf{B}\in({\mathbb{V}}\vee{\mathbb{W}})\setminus{\mathbb{W}}$, so that
$\mathbf{A}\times\mathbf{B}\in({\mathbb{V}}\vee{\mathbb{W}})\setminus
({\mathbb{V}}\cup{\mathbb{W}})$ and thus ${\mathbb{V}}\cup{\mathbb{W}%
}\subsetneq{\mathbb{V}}\vee{\mathbb{W}}$. Recall that the \emph{subdirect
product} of ${\mathbb{V}}$ and ${\mathbb{W}}$ is the class, denoted by
${\mathbb{V}}\times_{s}{\mathbb{W}}$, whose members are isomorphic images of
subdirect products of a member of ${\mathbb{V}}$ and a member of ${\mathbb{W}%
}$. Clearly, ${\mathbb{V}}\cup{\mathbb{W}}\subseteq{\mathbb{V}}\times
_{s}{\mathbb{W}}\subseteq{\mathbb{V}}\vee{\mathbb{W}}$, so that%
\[
Si({\mathbb{V}})\cup Si({\mathbb{W}})=Si({\mathbb{V}}\cup{\mathbb{W}%
})\subseteq Si({\mathbb{V}}\times_{s}{\mathbb{W}})\subseteq Si({\mathbb{V}%
}\vee{\mathbb{W}}).
\]
For any $\mathbf{M}\in Si({\mathbb{V}}\times_{s}{\mathbb{W}})$, $\mathbf{M}$
is a subdirect product of an $\mathbf{A}\in{\mathbb{V}}$ and a $\mathbf{B}%
\in{\mathbb{W}}$, so that $\mathbf{A}$ is trivial, case in which
$\mathbf{M}\in Si({\mathbb{W}})$, or $\mathbf{B}$ is trivial, case in which
$\mathbf{M}\in Si({\mathbb{V}})$. Thus $Si({\mathbb{V}}\times_{s}{\mathbb{W}%
})\subseteq Si({\mathbb{V}})\cup Si({\mathbb{W}})$, hence $Si({\mathbb{V}%
}\times_{s}{\mathbb{W}})=Si({\mathbb{V}})\cup Si({\mathbb{W}})$. Since
${\mathbb{V}}\times_{s}\mathbb{W}\subseteq{\mathbb{V}}\vee{\mathbb{W}}$, we
get that the following equivalence holds: ${\mathbb{V}}\vee{\mathbb{W}%
}={\mathbb{V}}\times_{s}{\mathbb{W}}$ iff $Si({\mathbb{V}}\vee{\mathbb{W}%
})=Si({\mathbb{V}})\cup Si({\mathbb{W}})$.

Sufficient Maltsev-type conditions for the equivalence ${\mathbb{V}}%
\vee{\mathbb{W}}={\mathbb{V}}\times_{s}{\mathbb{W}}$ to hold are available in
the literature: see \cite{pl, tkfp, klp}. These contributions are all inspired
by the celebrated result by Gr\"{a}tzer, Lakser and P\l onka according to
which two independent similar varieties ${\mathbb{V}}$ and ${\mathbb{W}}$ are
such that every member of ${\mathbb{V}}\vee{\mathbb{W}}$ is isomorphic to
the\emph{ direct }product of a member of ${\mathbb{V}}$ and a member of
${\mathbb{W}}$ \cite{glp}. Of course, the notion of independence is of limited
use in the context of PBZ$^{\ast}$ --lattices, since ${\mathbb{BA}}$ is the
unique atom in $\mathrm{Subvar(\mathbb{PBZL}^{\ast})}$ and thus there are no
two nontrivial disjoint (hence, no two independent) varieties of PBZ$^{\ast}$
--lattices. The investigation of subdirect products of varieties of
PBZ$^{\ast}$ --lattices, however, can be carried out with more ad hoc methods,
yielding useful information on joins of specific subvarieties.

If ${\mathbb{V}}\vee{\mathbb{W}}={\mathbb{V}}\times_{s}{\mathbb{W}}$ and
${\mathbb{U}}$ is a variety of the same type as ${\mathbb{V}}$ and
${\mathbb{W}}$, then $({\mathbb{U}}\cap{\mathbb{V}})\times_{s}({\mathbb{U}%
}\cap{\mathbb{W}})\subseteq({\mathbb{U}}\cap{\mathbb{V}})\vee({\mathbb{U}}%
\cap{\mathbb{W}})\subseteq{\mathbb{U}}\cap({\mathbb{V}}\vee{\mathbb{W}})$ and%
\[%
\begin{array}
[c]{ll}%
Si({\mathbb{U}}\cap({\mathbb{V}}\vee{\mathbb{W}})) & =Si({\mathbb{U}})\cap
Si({\mathbb{V}}\vee{\mathbb{W}})\\
& =Si({\mathbb{U}})\cap(Si({\mathbb{V}})\cup Si({\mathbb{W}}))\\
& =(Si({\mathbb{U}})\cap Si({\mathbb{V}}))\cup(Si({\mathbb{U}})\cap
Si({\mathbb{W}}))\\
& =Si({\mathbb{U}}\cap{\mathbb{V}})\cup Si({\mathbb{U}}\cap{\mathbb{W}})\\
& =Si(({\mathbb{U}}\cap{\mathbb{V}})\times_{s}({\mathbb{U}}\cap{\mathbb{W}})),
\end{array}
\]
hence ${\mathbb{U}}\cap({\mathbb{V}}\vee{\mathbb{W}})=({\mathbb{U}}%
\cap{\mathbb{V}})\vee({\mathbb{U}}\cap{\mathbb{W}})=({\mathbb{U}}%
\cap{\mathbb{V}})\times_{s}({\mathbb{U}}\cap{\mathbb{W}})$. For instance,
since $\mathbb{OML}\vee V_{\mathbb{BZL}}(\mathbb{AOL})=\mathbb{OML}\times
_{s}V_{\mathbb{BZL}}(\mathbb{AOL})$ (see Lemma \ref{subdirirred} below), it
follows that%
\begin{align*}
\mathbb{SDM}\cap(\mathbb{OML}\vee V_{\mathbb{BZL}}(\mathbb{AOL}))  &
=(\mathbb{SDM}\cap\mathbb{OML})\vee(\mathbb{SDM}\cap V_{\mathbb{BZL}%
}(\mathbb{AOL}))\\
&  =\mathbb{OML}\vee\mathbb{SAOL}=\mathbb{OML}\times_{s}\mathbb{SAOL}.
\end{align*}
As a consequence of the above, if ${\mathbb{V}}\vee{\mathbb{W}}={\mathbb{V}%
}\times_{s}{\mathbb{W}}$ and $\mathrm{Subvar}({\mathbb{V}})$ and
$\mathrm{Subvar}({\mathbb{W}})$ are distributive, then $\mathrm{Subvar}%
({\mathbb{V}}\vee{\mathbb{W}})$ is distributive.

\begin{problem}
If ${\mathbb{V}}\vee{\mathbb{W}}={\mathbb{V}}\times_{s}{\mathbb{W}}$,
${\mathbb{C}}$ is a subvariety of ${\mathbb{V}}$ and ${\mathbb{D}}$ is a
subvariety of ${\mathbb{W}}$, under what conditions does it follow that
${\mathbb{C}}\vee{\mathbb{D}}={\mathbb{C}}\times_{s}{\mathbb{D}}$? Does the
condition that ${\mathbb{C}}\cap{\mathbb{D}}={\mathbb{V}}\cap{\mathbb{W}}$
suffice? A partial answer to this question is given by Lemma \ref{sprodvar} below.
\end{problem}

If ${\mathbb{V}}\vee{\mathbb{W}}={\mathbb{V}}\times_{s}{\mathbb{W}}$ and
${\mathbb{U}}$ is a subvariety of ${\mathbb{V}}$, then ${\mathbb{U}}%
\vee{\mathbb{W}}$ is a subvariety of ${\mathbb{V}}\vee{\mathbb{W}}$, so that%
\[%
\begin{array}
[c]{ll}%
Si({\mathbb{U}}\vee{\mathbb{W}}) & =({\mathbb{U}}\vee{\mathbb{W}})\cap
Si({\mathbb{V}}\vee{\mathbb{W}})\\
& =({\mathbb{U}}\vee{\mathbb{W}})\cap(Si({\mathbb{V}})\cup Si({\mathbb{W}}))\\
& =(({\mathbb{U}}\vee{\mathbb{W}})\cap Si({\mathbb{V}}))\cup(({\mathbb{U}}%
\vee{\mathbb{W}})\cap Si({\mathbb{W}}))\\
& =(({\mathbb{U}}\vee{\mathbb{W}})\cap Si({\mathbb{V}}))\cup Si({\mathbb{W}}).
\end{array}
\]

\begin{lemma}
\label{sprodvar}Let ${\mathbb{V}}$ and ${\mathbb{W}}$ be varieties of a
similarity type $\tau$, ${\mathbb{U}}$ a subvariety of ${\mathbb{V}}$ and
$\Gamma$ a set of equations over $\tau$ such that ${\mathbb{V}}\vee
{\mathbb{W}}={\mathbb{V}}\times_{s}{\mathbb{W}}$, ${\mathbb{W}}\vDash\Gamma$
and ${\mathbb{U}}=\{\mathbf{A}\in{\mathbb{V}}:\mathbf{A}\vDash\Gamma\}$. Then:

\begin{itemize}
\item ${\mathbb{U}} \vee{\mathbb{W}} ={\mathbb{U}} \times_{s}{\mathbb{W}}
=\{\mathbf{A}\in{\mathbb{V}} \vee{\mathbb{W}} :\mathbf{A}\vDash\Gamma\}$;

\item ${\mathbb{U}}={\mathbb{V}}$ iff ${\mathbb{U}}\vee{\mathbb{W}%
}={\mathbb{V}}\vee{\mathbb{W}}$.
\end{itemize}
\end{lemma}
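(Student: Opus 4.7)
The plan is to prove the first bullet by establishing a circular chain of inclusions among the three classes ${\mathbb{U}} \vee {\mathbb{W}}$, ${\mathbb{U}} \times_{s} {\mathbb{W}}$, and ${\mathbb{X}} := \{\mathbf{A} \in {\mathbb{V}} \vee {\mathbb{W}} : \mathbf{A} \vDash \Gamma\}$, and then to read off the second bullet as an immediate consequence. First I would note that ${\mathbb{U}} \vDash \Gamma$ by its very definition and ${\mathbb{W}} \vDash \Gamma$ by hypothesis; hence every member of ${\mathbb{U}} \cup {\mathbb{W}}$ satisfies $\Gamma$. Since equations are preserved under the class operators ${\mathrm{H}}$, ${\mathrm{S}}$, ${\mathrm{P}}$, the variety ${\mathbb{U}} \vee {\mathbb{W}}$ they generate also satisfies $\Gamma$; combined with ${\mathbb{U}} \vee {\mathbb{W}} \subseteq {\mathbb{V}} \vee {\mathbb{W}}$, this gives ${\mathbb{U}} \vee {\mathbb{W}} \subseteq {\mathbb{X}}$. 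The inclusion ${\mathbb{U}} \times_{s} {\mathbb{W}} \subseteq {\mathbb{U}} \vee {\mathbb{W}}$ is routine, as recalled in the preamble to the lemma.

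The core step is thus ${\mathbb{X}} \subseteq {\mathbb{U}} \times_{s} {\mathbb{W}}$. Given $\mathbf{A} \in {\mathbb{X}}$, the hypothesis ${\mathbb{V}} \vee {\mathbb{W}} = {\mathbb{V}} \times_{s} {\mathbb{W}}$ lets me present $\mathbf{A}$ as a subdirect product of some $\mathbf{B} \in {\mathbb{V}}$ and $\mathbf{C} \in {\mathbb{W}}$. The projection onto the first factor is a surjective homomorphism, so $\mathbf{B}$ is a homomorphic image of $\mathbf{A}$; since $\mathbf{A} \vDash \Gamma$ and equational satisfaction is preserved under homomorphic images, $\mathbf{B} \vDash \Gamma$, hence $\mathbf{B} \in {\mathbb{U}}$. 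Thus $\mathbf{A}$ is exhibited as a subdirect product of a member of ${\mathbb{U}}$ and a member of ${\mathbb{W}}$, closing the circle.

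For the second bullet, the forward direction is trivial. For the converse, assuming ${\mathbb{U}} \vee {\mathbb{W}} = {\mathbb{V}} \vee {\mathbb{W}}$, the first bullet yields ${\mathbb{V}} \vee {\mathbb{W}} = \{\mathbf{A} \in {\mathbb{V}} \vee {\mathbb{W}} : \mathbf{A} \vDash \Gamma\}$, so every $\mathbf{A} \in {\mathbb{V}} \subseteq {\mathbb{V}} \vee {\mathbb{W}}$ satisfies $\Gamma$; therefore ${\mathbb{V}} \subseteq {\mathbb{U}}$, and together with ${\mathbb{U}} \subseteq {\mathbb{V}}$ this gives ${\mathbb{U}} = {\mathbb{V}}$. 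I do not expect any genuine obstacle: the only nontrivial move is the transfer of $\Gamma$ from a subdirect product down to one of its factors, which is free from the standard fact that factors in a subdirect representation are homomorphic images of the product, combined crucially with the standing hypothesis ${\mathbb{V}} \vee {\mathbb{W}} = {\mathbb{V}} \times_{s} {\mathbb{W}}$ (without which one could not place the arbitrary element of ${\mathbb{X}}$ inside a subdirect product at all).
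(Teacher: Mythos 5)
Your proof is correct, but it takes a genuinely different route from the paper's for the first bullet. The paper works entirely at the level of subdirectly irreducible algebras: it shows $Si({\mathbb{U}}\vee{\mathbb{W}})=Si({\mathbb{U}})\cup Si({\mathbb{W}})$ (splitting an arbitrary $\mathbf{A}\in Si({\mathbb{U}}\vee{\mathbb{W}})\subseteq Si({\mathbb{V}})\cup Si({\mathbb{W}})$ according to which side it lies on and using $\mathbf{A}\vDash\Gamma$ to push the ${\mathbb{V}}$-side into ${\mathbb{U}}$), then invokes the equivalence from the preamble, namely that ${\mathbb{U}}\vee{\mathbb{W}}={\mathbb{U}}\times_{s}{\mathbb{W}}$ iff the $Si$-classes match, together with a parallel computation of $Si(\{\mathbf{A}\in{\mathbb{V}}\vee{\mathbb{W}}:\mathbf{A}\vDash\Gamma\})$; this implicitly rests on Birkhoff's subdirect representation theorem. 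You instead argue element-wise via the circular chain ${\mathbb{U}}\vee{\mathbb{W}}\subseteq\{\mathbf{A}\in{\mathbb{V}}\vee{\mathbb{W}}:\mathbf{A}\vDash\Gamma\}\subseteq{\mathbb{U}}\times_{s}{\mathbb{W}}\subseteq{\mathbb{U}}\vee{\mathbb{W}}$, where the key middle inclusion transfers $\Gamma$ from a subdirect product to its first factor along the surjective projection. Your argument is more elementary and self-contained (it needs only the definition of subdirect product and preservation of equations under $\mathrm{H}$, $\mathrm{S}$, $\mathrm{P}$, not the reduction to subdirectly irreducibles), whereas the paper's version fits the $Si$-based machinery it develops in the preamble and reuses throughout the section. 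One cosmetic point: strictly speaking an element of ${\mathbb{V}}\times_{s}{\mathbb{W}}$ is only \emph{isomorphic} to a subdirect product of a $\mathbf{B}\in{\mathbb{V}}$ and a $\mathbf{C}\in{\mathbb{W}}$, so you should transport $\Gamma$ across that isomorphism before projecting onto $\mathbf{B}$; this is harmless. Your treatment of the second bullet coincides with the paper's.
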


\begin{proof}
Of course, $Si({\mathbb{U}})\cup Si({\mathbb{W}})\subseteq Si({\mathbb{U}}%
\vee{\mathbb{W}})$. For all $\mathbf{A}\in Si({\mathbb{U}}\vee{\mathbb{W}})$,
we have: $\mathbf{A}\in Si({\mathbb{V}}\vee{\mathbb{W}})=Si({\mathbb{V}})\cup
Si({\mathbb{W}})$ and $\mathbf{A}\vDash\Gamma$, so that either $\mathbf{A}\in
Si({\mathbb{W}})$ or $\mathbf{A}\in Si({\mathbb{V}})\subset{\mathbb{V}}$ and
$\mathbf{A}\vDash\Gamma$, the latter of which implies that $\mathbf{A}\in
Si({\mathbb{V}})\cap{\mathbb{U}}=Si({\mathbb{U}})$. Therefore $Si({\mathbb{U}%
}\vee{\mathbb{W}})=Si({\mathbb{U}})\cup Si({\mathbb{W}})$, thus ${\mathbb{U}%
}\vee{\mathbb{W}}={\mathbb{U}}\times_{s}{\mathbb{W}}$. We have that:%
\[%
\begin{array}
[c]{ll}%
Si(\{\mathbf{A}\in{\mathbb{V}}\vee{\mathbb{W}}:\mathbf{A}\vDash\Gamma\}) &
=\{\mathbf{A}\in Si({\mathbb{V}}\vee{\mathbb{W}}):\mathbf{A}\vDash\Gamma\}\\
& =\{\mathbf{A}\in Si({\mathbb{V}})\cup Si({\mathbb{W}}):\mathbf{A}%
\vDash\Gamma\}\\
& =\{\mathbf{A}\in Si({\mathbb{V}}):\mathbf{A}\vDash\Gamma\}\cup
Si({\mathbb{W}})\\
& =Si(\{\mathbf{A}\in{\mathbb{V}}:\mathbf{A}\vDash\Gamma\})\cup Si({\mathbb{W}%
})\\
& =Si({\mathbb{U}})\cup Si({\mathbb{W}})=Si({\mathbb{U}}\vee{\mathbb{W}}),
\end{array}
\]
hence ${\mathbb{U}}\vee{\mathbb{W}}=\{\mathbf{A}\in{\mathbb{V}}\vee
{\mathbb{W}}:\mathbf{A}\vDash\Gamma\}$.

Trivially, ${\mathbb{U}}={\mathbb{V}}$ implies ${\mathbb{U}}\vee{\mathbb{W}%
}={\mathbb{V}}\vee{\mathbb{W}}$. Conversely, if ${\mathbb{V}}\vee{\mathbb{W}%
}={\mathbb{U}}\vee{\mathbb{W}}=\{\mathbf{A}\in{\mathbb{V}}\vee{\mathbb{W}%
}:\mathbf{A}\vDash\Gamma\}$, then ${\mathbb{V}}\vee{\mathbb{W}}\vDash\Gamma$,
thus ${\mathbb{V}}\vDash\Gamma$, hence ${\mathbb{U}}=\{\mathbf{A}%
\in{\mathbb{V}}:\mathbf{A}\vDash\Gamma\}={\mathbb{V}}$.
\end{proof}

\begin{lemma}
\textrm{\cite{rgcmfp}}\label{subdirirred} All subdirectly irreducible members
of $\mathbb{OML}\vee V_{\mathbb{BZL}}(\mathbb{AOL})$ belong to $\mathbb{OML}%
\cup\mathbb{AOL}$, in particular $\mathbb{OML}\vee V_{\mathbb{BZL}%
}(\mathbb{AOL})=\mathbb{OML}\times_{s}V_{\mathbb{BZL}}(\mathbb{AOL})$.
\end{lemma}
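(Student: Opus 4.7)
The plan is to apply J\'onsson's Lemma to the variety $\mathbb{V}:=\mathbb{OML}\vee V_{\mathbb{BZL}}(\mathbb{AOL})$: since PBZ$^{\ast}$--lattices are congruence distributive (they have a lattice term reduct, hence admit the lattice ternary majority terms), and $\mathbb{V}$ is generated by $\mathbb{OML}\cup\mathbb{AOL}$, one obtains
$Si(\mathbb{V})\subseteq \mathrm{H}\mathrm{S}\mathrm{P}_{U}(\mathbb{OML}\cup\mathbb{AOL})$.
I would then peel off the operators $\mathrm{P}_{U}$, $\mathrm{S}$ and $\mathrm{H}$ in turn, showing that each stays inside $\mathbb{OML}\cup\mathbb{AOL}$ (for $\mathrm{H}$, only when applied to subdirectly irreducible outputs).

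The $\mathrm{P}_{U}$ step is immediate: in an ultraproduct of members of $\mathbb{OML}\cup\mathbb{AOL}$, either the set of $\mathbb{OML}$--indices or the set of $\mathbb{AOL}$--indices lies in the ultrafilter, so the ultraproduct is (up to isomorphism) an ultraproduct purely of orthomodular lattices, hence in $\mathbb{OML}$ since this is a variety, or purely of antiortholattices, hence in $\mathbb{AOL}$ since, by the preliminaries, $\mathbb{AOL}$ is a \emph{universal} class and universal classes are closed under ultraproducts by \L o\'s. The $\mathrm{S}$ step is equally direct: $\mathbb{OML}$ is closed under subalgebras because it is a variety, and $\mathbb{AOL}$ is closed under subalgebras because, again, it is a universal class. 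Hence $\mathrm{S}\mathrm{P}_{U}(\mathbb{OML}\cup\mathbb{AOL})\subseteq\mathbb{OML}\cup\mathbb{AOL}$.

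The main obstacle is the $\mathrm{H}$ step, because homomorphic images of antiortholattices need not be antiortholattices, so one cannot simply argue class-by-class. Here the subdirect irreducibility hypothesis becomes essential: given $\mathbf{A}\in Si(\mathbb{V})$, one writes $\mathbf{A}$ as a homomorphic image of some $\mathbf{B}\in\mathbb{OML}\cup\mathbb{AOL}$. If $\mathbf{B}\in\mathbb{OML}$ then $\mathbf{A}\in\mathbb{OML}$, since $\mathbb{OML}$ is a variety. If instead $\mathbf{B}\in\mathbb{AOL}$, then $\mathbf{A}\in\mathrm{H}(\mathbb{AOL})\subseteq V_{\mathbb{BZL}}(\mathbb{AOL})$; being subdirectly irreducible (an intrinsic property of its congruence lattice) and belonging to $V_{\mathbb{BZL}}(\mathbb{AOL})$, Lemma \ref{sdirr} forces $\mathbf{A}\in\mathbb{AOL}$. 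Either way, $\mathbf{A}\in\mathbb{OML}\cup\mathbb{AOL}$, which proves the first assertion.

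For the ``in particular'' part, the first assertion together with the trivial inclusions $Si(\mathbb{V})\cap\mathbb{OML}\subseteq Si(\mathbb{OML})$ and $Si(\mathbb{V})\cap\mathbb{AOL}\subseteq Si(V_{\mathbb{BZL}}(\mathbb{AOL}))$ yields $Si(\mathbb{V})=Si(\mathbb{OML})\cup Si(V_{\mathbb{BZL}}(\mathbb{AOL}))$; the subdirect-product criterion recorded in the excerpt just before the lemma --- namely, $\mathbb{V}_{1}\vee\mathbb{V}_{2}=\mathbb{V}_{1}\times_{s}\mathbb{V}_{2}$ iff $Si(\mathbb{V}_{1}\vee\mathbb{V}_{2})=Si(\mathbb{V}_{1})\cup Si(\mathbb{V}_{2})$ --- then gives $\mathbb{V}=\mathbb{OML}\times_{s}V_{\mathbb{BZL}}(\mathbb{AOL})$.
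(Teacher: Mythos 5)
Your argument is correct and complete. Note, however, that the paper does not prove this lemma at all: it is imported verbatim from \cite{rgcmfp} with only a citation, so there is no in-paper proof to compare against. Your reconstruction is the natural one: congruence distributivity via the lattice majority term, J\'onsson's Lemma applied to the generating class $\mathbb{OML}\cup\mathbb{AOL}$, closure of that union under $\mathrm{P}_{U}$ and $\mathrm{S}$ (using that $\mathbb{OML}$ is a variety and that $\mathbb{AOL}$ is, as the preliminaries record, a universal class), and --- the one genuinely non-routine step --- the use of Lemma \ref{sdirr} to repair the failure of $\mathbb{AOL}$ to be closed under $\mathrm{H}$, which is exactly where the subdirect irreducibility hypothesis must be spent. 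The derivation of the ``in particular'' clause from the criterion $Si({\mathbb{V}}\vee{\mathbb{W}})=Si({\mathbb{V}})\cup Si({\mathbb{W}})$ stated just before the lemma is also correct. The only point worth making explicit in a final write-up is that subdirect irreducibility relative to a variety coincides with absolute subdirect irreducibility (so that membership in $Si(\mathbb{OML}\vee V_{\mathbb{BZL}}(\mathbb{AOL}))$ transfers to $Si(V_{\mathbb{BZL}}(\mathbb{AOL}))$ once the algebra is known to lie in the smaller variety); you implicitly rely on this and it does hold, since the relevant quotients of an algebra in a variety stay in that variety.
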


We can derive from the above the following result from \cite{rgcmfp}:

\begin{proposition}
\begin{itemize}
\item $\mathbb{OML} \vee\mathbb{SAOL} =\mathbb{OML} \times_{s}\mathbb{SAOL} $
and $\mathbb{OML} \vee V_{\mathbb{BZL} }(\mathbf{D}_{3})=\mathbb{OML}
\times_{s}V_{\mathbb{BZL} }(\mathbf{D}_{3})$;

\item $\mathbb{OML} \vee V_{\mathbb{BZL} }(\mathbf{D}_{3})\subsetneq
\mathbb{OML} \vee\mathbb{SAOL} \subsetneq\mathbb{OML} \vee V_{\mathbb{BZL}
}(\mathbb{AOL} )$.
\end{itemize}

\label{d3saol}
\end{proposition}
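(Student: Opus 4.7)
The plan is to derive both $\times_{s}$ decompositions as instances of Lemma \ref{sprodvar}, always taking ${\mathbb{V}}=V_{\mathbb{BZL}}(\mathbb{AOL})$ and ${\mathbb{W}}=\mathbb{OML}$, so that the hypothesis ${\mathbb{V}}\vee{\mathbb{W}}={\mathbb{V}}\times_{s}{\mathbb{W}}$ is furnished by Lemma \ref{subdirirred}. What changes between the two equalities is only the set $\Gamma$ of equations: it must hold in $\mathbb{OML}$ and must carve out the intended ${\mathbb{U}}$ inside $V_{\mathbb{BZL}}(\mathbb{AOL})$.

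For $\mathbb{OML}\vee\mathbb{SAOL}=\mathbb{OML}\times_{s}\mathbb{SAOL}$ I would take $\Gamma=\{\mbox{\textup{SDM}}\}$ and ${\mathbb{U}}=\mathbb{SAOL}$. Here $\mathbb{OML}\vDash\mbox{\textup{SDM}}$, since in every orthomodular lattice $a^{\sim}=a^{\prime}$ and SDM coincides with the usual ortholattice De Morgan law; and by definition $\mathbb{SAOL}=\mathbb{SDM}\cap V_{\mathbb{BZL}}(\mathbb{AOL})=\{\mathbf{A}\in V_{\mathbb{BZL}}(\mathbb{AOL}):\mathbf{A}\vDash\mbox{\textup{SDM}}\}$, so Lemma \ref{sprodvar} applies directly. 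For $\mathbb{OML}\vee V_{\mathbb{BZL}}(\mathbf{D}_{3})=\mathbb{OML}\times_{s}V_{\mathbb{BZL}}(\mathbf{D}_{3})$ I would take $\Gamma=\{\mbox{\textup{SK}}\}$ and ${\mathbb{U}}=V_{\mathbb{BZL}}(\mathbf{D}_{3})$; that $\mathbb{OML}\vDash\mbox{\textup{SK}}$ is immediate because in orthomodular lattices $\square x=x$ and $\Diamond y=y$, so SK reduces to $x\wedge y\leq x\vee y$.

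The main obstacle is to check $V_{\mathbb{BZL}}(\mathbf{D}_{3})=\{\mathbf{A}\in V_{\mathbb{BZL}}(\mathbb{AOL}):\mathbf{A}\vDash\mbox{\textup{SK}}\}$. The inclusion $\subseteq$ is clear from $\mathbf{D}_{3}\vDash\mbox{\textup{SK}}$. For the converse I would pass to subdirectly irreducibles: by Lemma \ref{sdirr} every SI member of $V_{\mathbb{BZL}}(\mathbb{AOL})$ is an antiortholattice, and in any antiortholattice the trivial Brouwer complement gives $\Diamond y=1$ for $y\neq 0$ and $\square x=0$ for $x\neq 1$, so SK applied to any $x,y\in A\setminus\{0,1\}$ collapses to $x\leq y$, which forces $|A\setminus\{0,1\}|\leq 1$ and hence $\mathbf{A}\in\{\mathbf{D}_{1},\mathbf{D}_{2},\mathbf{D}_{3}\}$. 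By Lemma \ref{smallaolchains}.(\ref{smallaolchains2}) these are exactly the SI members of $V_{\mathbb{BZL}}(\mathbf{D}_{3})$, so the varieties $V_{\mathbb{BZL}}(\mathbb{AOL})\cap\mathbb{SK}$ and $V_{\mathbb{BZL}}(\mathbf{D}_{3})$ have the same SI classes and therefore coincide; Lemma \ref{sprodvar} then delivers the second $\times_{s}$ equality.

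The two strict inclusions would then be obtained by separating the joins through these same equations. Both $V_{\mathbb{BZL}}(\mathbf{D}_{3})$ and $\mathbb{OML}$ satisfy SK, hence so does their join; but $\mathbf{D}_{4}\in\mathbb{SAOL}\subseteq\mathbb{SAOL}\vee\mathbb{OML}$ and $\mathbf{D}_{4}\nvDash\mbox{\textup{SK}}$ by the analysis above (as $|D_{4}|=4$), yielding $V_{\mathbb{BZL}}(\mathbf{D}_{3})\vee\mathbb{OML}\subsetneq\mathbb{SAOL}\vee\mathbb{OML}$. Similarly $\mathbb{SAOL}\vee\mathbb{OML}\vDash\mbox{\textup{SDM}}$, whereas $\mathbb{AOL}\nvDash\mbox{\textup{SDM}}$ (recalled in the excerpt just before the proposition), so any antiortholattice failing SDM lies in $V_{\mathbb{BZL}}(\mathbb{AOL})\vee\mathbb{OML}$ but not in $\mathbb{SAOL}\vee\mathbb{OML}$, finishing with $\mathbb{SAOL}\vee\mathbb{OML}\subsetneq V_{\mathbb{BZL}}(\mathbb{AOL})\vee\mathbb{OML}$.
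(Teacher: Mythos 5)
Your proposal is correct and follows the same top-level strategy as the paper: both derive the two $\times_{s}$-decompositions from Lemma \ref{subdirirred} via Lemma \ref{sprodvar}, with $\Gamma=\{\mbox{\textup{SDM}}\}$ carving $\mathbb{SAOL}$ out of $V_{\mathbb{BZL}}(\mathbb{AOL})$. The one genuine divergence is in the second application: the paper imports from \cite[Corollary 3.3]{PBZ2} the characterisation $V_{\mathbb{BZL}}(\mathbf{D}_{3})=\{\mathbf{A}\in V_{\mathbb{BZL}}(\mathbb{AOL}):\mathbf{A}\vDash\{\mbox{\textup{SDM}},\mbox{\textup{SK}}\}\}$, whereas you prove directly --- and in a slightly sharper form, with \mbox{\textup{SK}} alone --- that $V_{\mathbb{BZL}}(\mathbf{D}_{3})=V_{\mathbb{BZL}}(\mathbb{AOL})\cap\mathbb{SK}$, by passing to subdirectly irreducibles via Lemma \ref{sdirr} and observing that the trivial Brouwer complement collapses \mbox{\textup{SK}} to $x\leq y$ on $A\setminus\{0,1\}$, forcing $|A|\leq 3$; this makes the proof self-contained at the cost of a short computation, and the computation is correct. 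For the strict inclusions you separate by equations ($\mathbf{D}_{4}\nvDash\mbox{\textup{SK}}$ and an antiortholattice failing \mbox{\textup{SDM}}) where the paper uses $\mathbf{D}_{5}\nvDash\mbox{\textup{SK}}$ and the explicit witness $\mathbf{D}_{2}^{2}\oplus\mathbf{D}_{2}^{2}$, then invokes the second bullet of Lemma \ref{sprodvar}; both routes work. The only step you leave tacit is the inclusion $V_{\mathbb{BZL}}(\mathbf{D}_{3})\subseteq\mathbb{SAOL}$ (needed before you can call the first containment an inclusion at all), but this is immediate since $\mathbf{D}_{3}$ has $0$ meet-irreducible and hence satisfies \mbox{\textup{SDM}}.
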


\begin{proof}
Recall from \cite[Corollary 3.3]{PBZ2} that $V_{\mathbb{BZL}}(\mathbf{D}%
_{3})=\{\mathbf{A}\in V_{\mathbb{BZL}}(\mathbb{AOL}):\mathbf{A}\vDash
\{SDM,SK\}\}$. Now apply the fact that $\mathbb{OML}\vDash\{SDM,SK\}$ and
Lemmas \ref{subdirirred} and \ref{sprodvar} to obtain first that
$\mathbb{OML}\vee\mathbb{SAOL}=\mathbb{OML}\times_{s}\mathbb{SAOL}$, then that
$\mathbb{OML}\vee V_{\mathbb{BZL}}(\mathbf{D}_{3})=\mathbb{OML}\times
_{s}V_{\mathbb{BZL}}(\mathbf{D}_{3})$. Recall that $\mathbf{D}_{5}%
\in\mathbb{SAOL}\setminus V_{\mathbb{BZL}}(\mathbf{D}_{3})$, which is easily
noticed from the fact that $\mathbf{D}_{5}\nvDash SK$. The antiortholattice
$\mathbf{D}_{2}^{2}\oplus\mathbf{D}_{2}^{2}\in V_{\mathbb{BZL}}(\mathbb{AOL}%
)\setminus\mathbb{SAOL}$. Hence $V_{\mathbb{BZL}}(\mathbf{D}_{3}%
)\subsetneq\mathbb{SAOL}\subsetneq V_{\mathbb{BZL}}(\mathbb{AOL})$, thus
$\mathbb{OML}\vee V_{\mathbb{BZL}}(\mathbf{D}_{3})\subsetneq\mathbb{OML}%
\vee\mathbb{SAOL}\subsetneq\mathbb{OML}\vee V_{\mathbb{BZL}}(\mathbb{AOL})$ by
Lemma \ref{sprodvar} and the above.
\end{proof}

Let us consider the identities:

\begin{flushleft}%
\begin{tabular}
[c]{ll}%
$WDSDM$ & $(x\wedge(y\vee z))^{\sim}\approx(x\wedge y)^{\sim}\wedge(x\wedge
z)^{\sim}$\\
$DIST\vee^{\sim}$ & $(x\vee x^{\sim})\wedge(y\vee y^{\sim}\vee z\vee z^{\sim
})\approx$\\
& $((x\vee x^{\sim})\wedge(y\vee y^{\sim}))\vee((x\vee x^{\sim})\wedge(z\vee
z^{\sim}))$\\
$WDIST\vee^{\sim}$ & $((x\vee x^{\sim})\wedge(y\vee y^{\sim}\vee z\vee
z^{\sim}))^{\sim}\approx$\\
& $(((x\vee x^{\sim})\wedge(y\vee y^{\sim}))\vee((x\vee x^{\sim})\wedge(z\vee
z^{\sim})))^{\sim}$%
\end{tabular}

\end{flushleft}

Note that $WDSDM$ implies $WDIST\vee^{\sim}$ and $DIST\vee^{\sim}$ implies
$WDIST\vee^{\sim}$. Also, recall from \cite{PBZ2, pbz5} that $V_{\mathbb{BZL}%
}(\mathbf{D}_{5})=\mathbb{SAOL}\cap\mathbb{DIST}$.

\begin{proposition}
\label{distnsaol}$V_{\mathbb{BZL}}(\mathbf{D}_{5})=\mathbb{SAOL}%
\cap\mathbb{DIST}\subsetneq\mathbb{SAOL},\mathbb{DIST}\subsetneq
\mathbb{SAOL}\vee\mathbb{DIST}\subsetneq V_{\mathbb{BZL}}(\mathbb{AOL})$.
\end{proposition}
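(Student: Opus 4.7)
The plan is to work through the chain stage by stage. The leftmost equality, $V_{\mathbb{BZL}}(\mathbf{D}_{5})=\mathbb{SAOL}\cap\mathbb{DIST}$, is just the one recalled from \cite{PBZ2,pbz5} immediately before the statement. For the two bottom strict inclusions it suffices to exhibit witnesses already present in the paper: the antiortholattice $\mathbf{F}_{8}=\mathbf{D}_{2}\oplus\mathbf{B}_{6}\oplus\mathbf{D}_{2}$ lies in $\mathbb{SAOL}$ (it has $0$ meet-irreducible, as observed in the preceding subsection) yet contains the non-distributive $\mathbf{B}_{6}$ as a sublattice, so $\mathbf{F}_{8}\in\mathbb{SAOL}\setminus\mathbb{DIST}$; dually, the distributive antiortholattice $\mathbf{D}_{2}^{2}\oplus\mathbf{D}_{2}^{2}$ (cited in the proof of Proposition \ref{d3saol}) has two atoms meeting at $0$, hence fails \textup{SDM} and lies in $\mathbb{DIST}\setminus\mathbb{SAOL}$. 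The two lower strict inclusions, and hence also $\mathbb{SAOL},\mathbb{DIST}\subsetneq\mathbb{SAOL}\vee\mathbb{DIST}$, follow at once.

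The substantive work is the rightmost strict inclusion $\mathbb{SAOL}\vee\mathbb{DIST}\subsetneq V_{\mathbb{BZL}}(\mathbb{AOL})$, for which I exploit the equation $WDIST\vee^{\sim}$ introduced just before the statement as a separator. Since the paper already records that $WDSDM$ and $DIST\vee^{\sim}$ each imply $WDIST\vee^{\sim}$, it is enough to verify $\mathbb{SAOL}\vDash WDSDM$ and $\mathbb{DIST}\vDash DIST\vee^{\sim}$. The latter is immediate, being an instance of ordinary distributivity. For the former, I observe that by Lemma \ref{sdirr} every $\mathbf{L}\in V_{\mathbb{BZL}}(\mathbb{AOL})$ embeds subdirectly into a product of antiortholattices, each of which contributes $\{0,1\}$ to the sharp elements, so that $S(\mathbf{L})$ is a Boolean subalgebra of a power of $\mathbf{D}_{2}$ and in particular distributive. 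Applying $SDM$ together with the BZ-lattice De Morgan identity $(y\vee z)^{\sim}=y^{\sim}\wedge z^{\sim}$ on both sides of $WDSDM$ then reduces that equation to a distributive law on the three sharp elements $x^{\sim},y^{\sim},z^{\sim}$, which holds. Consequently $\mathbb{SAOL}\vee\mathbb{DIST}\vDash WDIST\vee^{\sim}$.

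It remains to exhibit an antiortholattice in $V_{\mathbb{BZL}}(\mathbb{AOL})$ that fails $WDIST\vee^{\sim}$. I take $\mathbf{A}:=\mathbf{M}_{3}\oplus\mathbf{M}_{3}^{d}$, obtained via the ordinal sum construction from the preliminaries with $\mathbf{L}=\mathbf{M}_{3}$ and trivial middle, which automatically makes $\mathbf{A}$ an antiortholattice and hence a member of $\mathbb{AOL}\subseteq V_{\mathbb{BZL}}(\mathbb{AOL})$. The three atoms $a,b,c$ of $\mathbf{A}$ inherited from $\mathbf{M}_{3}$ satisfy $a\wedge b=a\wedge c=b\wedge c=0$ in $\mathbf{A}$, whilst $b\vee c=1_{\mathbf{M}_{3}}$ — the middle point of the ordinal sum — dominates $a$, so $a\wedge(b\vee c)=a\neq 0$. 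Substituting $x=a,\,y=b,\,z=c$ into $WDIST\vee^{\sim}$ and using $u\vee u^{\sim}=u$ for nonzero $u$ in an antiortholattice, the two sides compute to $(a\wedge(b\vee c))^{\sim}=a^{\sim}=0$ and $((a\wedge b)\vee(a\wedge c))^{\sim}=0^{\sim}=1$ respectively, so $WDIST\vee^{\sim}$ fails in $\mathbf{A}$. The main obstacle is precisely producing $\mathbf{A}$: one wants an $\mathbf{M}_{3}$-configuration to sit at the global $0$ of an antiortholattice, and the ordinal sum with $\mathbf{M}_{3}^{d}$ on top is what simultaneously preserves that configuration and prevents the atoms of the bottom $\mathbf{M}_{3}$ from becoming sharp — their Kleene involutes are coatoms in the upper copy rather than the global $1$.
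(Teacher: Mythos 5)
Your proof is correct and follows essentially the same route as the paper's: the same separating device (an identity from the $WDSDM$/$WDIST\vee^{\sim}$ family that holds in both $\mathbb{SAOL}$ and $\mathbb{DIST}$) and the same witness $\mathbf{M}_{3}\oplus\mathbf{M}_{3}\cong\mathbf{M}_{3}\oplus\mathbf{M}_{3}^{d}$ for the rightmost strict inclusion, together with incomparability witnesses for the remaining ones. The only cosmetic differences are that the paper works with $WDSDM$ directly rather than with its consequence $WDIST\vee^{\sim}$, and that it uses $\mathbf{D}_{2}\oplus\mathbf{M}_{3}\oplus\mathbf{D}_{2}$ in place of your $\mathbf{F}_{8}$ as the member of $\mathbb{SAOL}\setminus\mathbb{DIST}$; your explicit verification that $\mathbb{SAOL}\vDash WDSDM$ via the distributivity of the sharp elements is a detail the paper leaves as an observation.
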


\begin{proof}
Observe that the identity $WDSDM$ is satisfied both in $\mathbb{SAOL}$ and in
$\mathbb{DIST}$. The antiortholattice on $\mathbf{M}_{3}\oplus\mathbf{M}_{3}$
fails WDSDM, because, if $a,b,c$ are its three atoms, then $(a\wedge(b\vee
c))^{\sim}=a^{\sim}=0$, yet $(a\wedge b)^{\sim}\wedge(a\wedge c)^{\sim
}=0^{\sim}\wedge0^{\sim}=1$. Hence $\mathbf{M}_{3}\oplus\mathbf{M}_{3}%
\in\mathbb{AOL}\setminus(\mathbb{SAOL}\vee\mathbb{DIST})\subseteq
V_{\mathbb{BZL}}(\mathbb{AOL})\setminus(\mathbb{SAOL}\vee\mathbb{DIST})$. The
antiortholattice $\mathbf{D}_{2}\oplus\mathbf{M}_{3}\oplus\mathbf{D}_{2}%
\in\mathbb{SAOL}\setminus\mathbb{DIST}$, while the antiortholattice
$\mathbf{D}_{2}^{2}\oplus\mathbf{D}_{2}^{2}\in\mathbb{DIST}\setminus
\mathbb{SAOL}$, hence $\mathbb{SAOL}$ and $\mathbb{DIST}$ are incomparable,
thus $\mathbb{SAOL}\cap\mathbb{DIST}\subsetneq\mathbb{SAOL},\mathbb{DIST}%
\subsetneq\mathbb{SAOL}\vee\mathbb{DIST}$.
\end{proof}

\begin{proposition}
\begin{itemize}
\item $\mathbb{OML} \vee\mathbb{DIST} =\mathbb{OML} \times_{s}\mathbb{DIST} $
and $\mathbb{OML} \vee V_{\mathbb{BZL} }(\mathbf{D}_{5})=\mathbb{OML}
\times_{s}V_{\mathbb{BZL} }(\mathbf{D}_{5})$;

\item $\mathbb{OML} \vee V_{\mathbb{BZL} }(\mathbf{D}_{3})\subsetneq
\mathbb{OML} \vee V_{\mathbb{BZL} }(\mathbf{D}_{5})=\mathbb{OML}
\vee(\mathbb{SAOL} \cap\mathbb{DIST} )=(\mathbb{OML} \vee\mathbb{SAOL}
)\cap(\mathbb{OML} \vee\mathbb{DIST} )\subsetneq\mathbb{OML} \vee\mathbb{SAOL}
,\mathbb{OML} \vee\mathbb{DIST} \subsetneq\mathbb{OML} \vee\mathbb{SAOL}
\vee\mathbb{DIST} \subsetneq\mathbb{OML} \vee V_{\mathbb{BZL} }(\mathbb{AOL}
)$, in particular the varieties $\mathbb{OML} \vee\mathbb{SAOL} $ and
$\mathbb{OML} \vee\mathbb{DIST} $ are incomparable.
\end{itemize}
\end{proposition}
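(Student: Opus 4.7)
For the first bullet the natural tool is Lemma \ref{sprodvar} applied with $\mathbb{V}=V_{\mathbb{BZL}}(\mathbb{AOL})$ and $\mathbb{W}=\mathbb{OML}$, since Lemma \ref{subdirirred} already supplies $\mathbb{V}\vee\mathbb{W}=\mathbb{V}\times_{s}\mathbb{W}$. The task is to choose a set $\Gamma$ of equations that defines the target subvariety inside $V_{\mathbb{BZL}}(\mathbb{AOL})$ and still holds in $\mathbb{OML}$. For $\mathbb{DIST}$ I would take $\Gamma=\{DIST\vee^{\sim}\}$: $\mathbb{OML}$ satisfies it trivially because $x\vee x^{\sim}=1$ there, and within $V_{\mathbb{BZL}}(\mathbb{AOL})$ the equations $DIST$ and $DIST\vee^{\sim}$ cut out the same subvariety. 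This is the delicate point: $DIST$ obviously implies $DIST\vee^{\sim}$, while conversely, by Lemma \ref{sdirr} every subdirectly irreducible member of $V_{\mathbb{BZL}}(\mathbb{AOL})$ lies in $\mathbb{AOL}$, and on antiortholattices $x\vee x^{\sim}$ equals $x$ for $x\neq 0$ and $1$ for $x=0$, which reduces $DIST\vee^{\sim}$ to $DIST$ when $x,y,z$ are all non-zero and makes it trivially true in the degenerate cases. For $V_{\mathbb{BZL}}(\mathbf{D}_{5})=\mathbb{SAOL}\cap\mathbb{DIST}$ (Proposition \ref{distnsaol}) the same method applies with $\Gamma=\{\mathrm{SDM},\,DIST\vee^{\sim}\}$, both equations holding in $\mathbb{OML}$.

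The strict inclusion $\mathbb{OML}\vee V_{\mathbb{BZL}}(\mathbf{D}_{3})\subsetneq\mathbb{OML}\vee V_{\mathbb{BZL}}(\mathbf{D}_{5})$ follows from Lemma \ref{smallaolchains}.(\ref{smallaolchains1}) combined with $\mathbf{D}_{5}\nvDash\mathrm{SK}$ and the fact that $\mathbb{OML}\vee V_{\mathbb{BZL}}(\mathbf{D}_{3})\vDash\mathrm{SK}$ (both noted in the proof of Proposition \ref{d3saol}), and the equality $\mathbb{OML}\vee V_{\mathbb{BZL}}(\mathbf{D}_{5})=\mathbb{OML}\vee(\mathbb{SAOL}\cap\mathbb{DIST})$ is just Proposition \ref{distnsaol}. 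The substantive equality $\mathbb{OML}\vee(\mathbb{SAOL}\cap\mathbb{DIST})=(\mathbb{OML}\vee\mathbb{SAOL})\cap(\mathbb{OML}\vee\mathbb{DIST})$ I would obtain by computing $Si$ on both sides: by the first bullet together with Proposition \ref{d3saol} each of the three relevant joins is a subdirect product, so by the identity $Si(\mathbb{A}\times_{s}\mathbb{B})=Si(\mathbb{A})\cup Si(\mathbb{B})$ recorded in the opening discussion of this subsection, both sides of the claimed equality have the same subdirectly irreducible class, namely $Si(\mathbb{OML})\cup(Si(\mathbb{SAOL})\cap Si(\mathbb{DIST}))$, and therefore coincide as varieties.

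The strict inclusions around the intersection, together with the incomparability of $\mathbb{OML}\vee\mathbb{SAOL}$ and $\mathbb{OML}\vee\mathbb{DIST}$, I would settle via the two counterexamples already featured in the proof of Proposition \ref{distnsaol}. The algebra $\mathbf{D}_{2}\oplus\mathbf{M}_{3}\oplus\mathbf{D}_{2}$ lies in $\mathbb{SAOL}$ (its zero is meet-irreducible) but refutes $DIST\vee^{\sim}$ on the three atoms of the inner $\mathbf{M}_{3}$, placing it in $(\mathbb{OML}\vee\mathbb{SAOL})\setminus(\mathbb{OML}\vee\mathbb{DIST})$; dually, $\mathbf{D}_{2}^{2}\oplus\mathbf{D}_{2}^{2}$ is distributive but its two lower-level atoms meet at the absolute zero and hence refute $\mathrm{SDM}$, placing it in $(\mathbb{OML}\vee\mathbb{DIST})\setminus(\mathbb{OML}\vee\mathbb{SAOL})$. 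These two witnesses simultaneously yield incomparability, strictness of the intersection below each summand, and strict inclusion below $\mathbb{OML}\vee\mathbb{SAOL}\vee\mathbb{DIST}$. Finally, $\mathbf{M}_{3}\oplus\mathbf{M}_{3}$ separates $\mathbb{OML}\vee\mathbb{SAOL}\vee\mathbb{DIST}$ from $\mathbb{OML}\vee V_{\mathbb{BZL}}(\mathbb{AOL})$: evaluated on the atoms of its lower $\mathbf{M}_{3}$ it refutes $WDIST\vee^{\sim}$ (the LHS is $0$, the RHS is $1$), while $\mathbb{OML}$ satisfies $WDIST\vee^{\sim}$ trivially, $\mathbb{DIST}$ satisfies it because $DIST$ implies it, and $\mathbb{SAOL}$ satisfies it because $WDSDM$ implies it, as remarked immediately after the identities are listed.

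The central obstacle, on which everything else hinges, is the very first step: proving that $DIST\vee^{\sim}$ really defines $\mathbb{DIST}$ within $V_{\mathbb{BZL}}(\mathbb{AOL})$. On $\mathbb{OML}$ the equation is strictly weaker than $DIST$, so the equivalence only materialises after Lemma \ref{sdirr} lets us restrict attention to antiortholattice witnesses; without that reduction Lemma \ref{sprodvar} cannot be triggered and the subsequent distributivity computation in the second bullet loses its footing.
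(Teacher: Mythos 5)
Your proposal is correct and follows essentially the same route as the paper: Lemma \ref{sprodvar} applied over $\mathbb{OML}\vee V_{\mathbb{BZL}}(\mathbb{AOL})=\mathbb{OML}\times_{s}V_{\mathbb{BZL}}(\mathbb{AOL})$ with $\Gamma=\{DIST\vee^{\sim}\}$ and $\Gamma=\{\mathrm{SDM},DIST\vee^{\sim}\}$, the key reduction (via Lemma \ref{sdirr}) showing $DIST\vee^{\sim}$ and $DIST$ define the same subvariety of $V_{\mathbb{BZL}}(\mathbb{AOL})$, and the same three witnesses $\mathbf{D}_{2}\oplus\mathbf{M}_{3}\oplus\mathbf{D}_{2}$, $\mathbf{D}_{2}^{2}\oplus\mathbf{D}_{2}^{2}$ and $\mathbf{M}_{3}\oplus\mathbf{M}_{3}$ for the strict inclusions and incomparabilities. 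The only (immaterial) divergence is that you establish the distributivity identity $(\mathbb{OML}\vee\mathbb{SAOL})\cap(\mathbb{OML}\vee\mathbb{DIST})=\mathbb{OML}\vee(\mathbb{SAOL}\cap\mathbb{DIST})$ by comparing classes of subdirectly irreducibles, where the paper intersects the equational descriptions supplied by Lemma \ref{sprodvar}; both arguments rest on the same facts.
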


\begin{proof}
Note that $\mathbb{OML}\vDash DIST\vee^{\sim}$ and that, in $\mathbb{AOL}$,
$DIST\vee^{\sim}$ is equivalent to $DIST$, that is $\mathbb{DIST}%
\cap\mathbb{AOL}=\{\mathbf{A}\in\mathbb{AOL}:\mathbf{A}\vDash DIST\vee^{\sim
}\}$. The latter, along with the fact that $\mathbb{DIST}$ is a subvariety of
$V_{\mathbb{BZL}}(\mathbb{AOL})$ and Lemma \ref{sdirr}, give us:%
\[%
\begin{array}
[c]{ll}%
Si(\mathbb{DIST}) & =\mathbb{DIST}\cap Si(V_{\mathbb{BZL}}(\mathbb{AOL}))\\
& =\mathbb{DIST}\cap Si(\mathbb{AOL})\\
& =Si(\mathbb{DIST}\cap\mathbb{AOL})\\
& =Si(\{\mathbf{A}\in\mathbb{AOL}:\mathbf{A}\vDash DIST\vee^{\sim}\})\\
& =Si(\{\mathbf{A}\in V_{\mathbb{BZL}}(\mathbb{AOL}):\mathbf{A}\vDash
DIST\vee^{\sim}\}),
\end{array}
\]
therefore $\mathbb{DIST}=\{\mathbf{A}\in V_{\mathbb{BZL}}(\mathbb{AOL}%
):\mathbf{A}\vDash DIST\vee^{\sim}\}$. By Lemmas \ref{subdirirred} and
\ref{sprodvar}, it follows that $\mathbb{OML}\vee\mathbb{DIST}=\mathbb{OML}%
\times_{s}\mathbb{DIST}$. By the above, $\mathbb{OML}\vDash\{SDM,DIST\vee
^{\sim}\}$ and $V_{\mathbb{BZL}}(\mathbf{D}_{5})=\mathbb{SAOL}\cap
\mathbb{DIST}=\{\mathbf{A}\in V_{\mathbb{BZL}}(\mathbb{AOL}):\mathbf{A}%
\vDash\{SDM,DIST\vee^{\sim}\}\}$, hence $\mathbb{OML}\vee V_{\mathbb{BZL}%
}(\mathbf{D}_{5})=\mathbb{OML}\times_{s}V_{\mathbb{BZL}}(\mathbf{D}_{5})$ by
Lemmas \ref{subdirirred} and \ref{sprodvar}. By the above, Propositions
\ref{d3saol} and \ref{distnsaol} and again Lemma \ref{sprodvar}, it follows
that:%
\begin{align*}
\mathbb{OML}\vee V_{\mathbb{BZL}}(\mathbf{D}_{3})  &  \subsetneq
\mathbb{OML}\vee V_{\mathbb{BZL}}(\mathbf{D}_{5})\\
&  \subsetneq\mathbb{OML}\vee\mathbb{SAOL},\mathbb{OML}\vee\mathbb{DIST}\\
&  \subsetneq\mathbb{OML}\vee V_{\mathbb{BZL}}(\mathbb{AOL}).
\end{align*}
By Lemma \ref{sprodvar}, the above and Proposition \ref{d3saol},%
\[%
\begin{array}
[c]{ll}%
\mathbb{OML}\vee V_{\mathbb{BZL}}(\mathbf{D}_{5}) & =\mathbb{OML}%
\vee(\mathbb{SAOL}\cap\mathbb{DIST})\\
& =\mathbb{OML}\vee\{\mathbf{A}\in V_{\mathbb{BZL}}(\mathbb{AOL}%
):\mathbf{A}\vDash\{SDM,DIST\}\}\\
& =\{\mathbf{A}\in\mathbb{OML}\vee V_{\mathbb{BZL}}(\mathbb{AOL}%
):\mathbf{A}\vDash\{SDM,DIST\}\}\\
&
\begin{array}
[c]{l}%
=\{\mathbf{A}\in\mathbb{OML}\vee V_{\mathbb{BZL}}(\mathbb{AOL}):\mathbf{A}%
\vDash SDM\}\\
\cap\{\mathbf{A}\in\mathbb{OML}\vee V_{\mathbb{BZL}}(\mathbb{AOL}%
):\mathbf{A}\vDash DIST\}
\end{array}
\\
& =(\mathbb{OML}\vee\mathbb{SAOL})\cap(\mathbb{OML}\vee\mathbb{DIST}),
\end{array}
\]
hence $(\mathbb{OML}\vee\mathbb{SAOL})\cap(\mathbb{OML}\vee\mathbb{DIST}%
)\subsetneq\mathbb{OML}\vee\mathbb{SAOL},\mathbb{OML}\vee\mathbb{DIST}$, so
that $\mathbb{OML}\vee\mathbb{SAOL}$ and $\mathbb{OML}\vee\mathbb{DIST}$ are
incomparable. Therefore%
\begin{align*}
\mathbb{OML}\vee\mathbb{SAOL},\mathbb{OML}\vee\mathbb{DIST}  &  \subsetneq
\mathbb{OML}\vee\mathbb{SAOL}\vee\mathbb{OML}\vee\mathbb{DIST}\\
&  =\mathbb{OML}\vee\mathbb{SAOL}\vee\mathbb{DIST}.
\end{align*}
Since $\mathbb{OML}\vDash DIST\vee^{\sim}$ and $\mathbb{SAOL}\vee
\mathbb{DIST}\vDash WDSDM$, it follows that $\mathbb{OML}\vee\mathbb{SAOL}%
\vee\mathbb{DIST}\vDash WDIST\vee^{\sim}$. Note that, in $\mathbb{AOL}$,
$WDIST\vee^{\sim}$ is equivalent to $WDSDM$, hence, by the proof of
Proposition \ref{distnsaol}, the antiortholattice $\mathbf{M}_{3}%
\oplus\mathbf{M}_{3}$ fails $WDIST\vee^{\sim}$. It follows that $\mathbf{M}%
_{3}\oplus\mathbf{M}_{3}\in\mathbb{AOL}\setminus(\mathbb{OML}\vee
\mathbb{SAOL}\vee\mathbb{DIST})\subseteq(\mathbb{OML}\vee V_{\mathbb{BZL}%
}(\mathbb{AOL}))\setminus(\mathbb{OML}\vee\mathbb{SAOL}\vee\mathbb{DIST})$,
therefore $\mathbb{OML}\vee\mathbb{SAOL}\vee\mathbb{DIST}\subsetneq
\mathbb{OML}\vee V_{\mathbb{BZL}}(\mathbb{AOL})$.
\end{proof}

\begin{lemma}
\label{sdirirred}For any subvariety ${\mathbb{V}}$ of $\mathbb{OML}\vee
V_{\mathbb{BZL} }(\mathbb{AOL})$, $Si({\mathbb{V}})={\mathbb{V}}\cap
Si(\mathbb{OML}\cup\mathbb{AOL})$.
\end{lemma}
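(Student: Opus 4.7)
The plan is to prove the two inclusions directly, leveraging Lemma \ref{subdirirred} for the non-trivial direction and the fact that subdirect irreducibility is an intrinsic property of an algebra (it depends only on its congruence lattice, not on the ambient variety).

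For the inclusion $Si({\mathbb{V}})\subseteq{\mathbb{V}}\cap Si(\mathbb{OML}\cup\mathbb{AOL})$, I would take an arbitrary $\mathbf{A}\in Si({\mathbb{V}})$. Since ${\mathbb{V}}\subseteq\mathbb{OML}\vee V_{\mathbb{BZL}}(\mathbb{AOL})$, we have $\mathbf{A}\in Si(\mathbb{OML}\vee V_{\mathbb{BZL}}(\mathbb{AOL}))$. Lemma \ref{subdirirred} then yields $\mathbf{A}\in\mathbb{OML}\cup\mathbb{AOL}$, and since $\mathbf{A}$ is subdirectly irreducible as an algebra, this immediately gives $\mathbf{A}\in Si(\mathbb{OML}\cup\mathbb{AOL})$. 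Combined with $\mathbf{A}\in{\mathbb{V}}$, this places $\mathbf{A}$ in the intersection on the right.

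The reverse inclusion ${\mathbb{V}}\cap Si(\mathbb{OML}\cup\mathbb{AOL})\subseteq Si({\mathbb{V}})$ is essentially tautological: if $\mathbf{A}\in{\mathbb{V}}$ and $\mathbf{A}$ is subdirectly irreducible (which is what membership in $Si(\mathbb{OML}\cup\mathbb{AOL})$ certifies), then $\mathbf{A}\in Si({\mathbb{V}})$ by definition.

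No substantive obstacle is expected here --- all the work has already been done by Lemma \ref{subdirirred}. The only subtlety worth noting explicitly in the write-up is that the notation $Si(-)$ should be read as picking out the subdirectly irreducible members of a class, a property intrinsic to each algebra, so that membership in $Si(\mathbb{OML}\cup\mathbb{AOL})$ and membership in $Si({\mathbb{V}})$ agree as soon as one knows the algebra lies in both $\mathbb{OML}\cup\mathbb{AOL}$ and ${\mathbb{V}}$.
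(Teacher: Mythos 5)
Your proof is correct and follows the same route as the paper: the paper's entire proof is ``By Lemma \ref{subdirirred}'', and your argument is just the routine unpacking of that reference, together with the (correct) observation that subdirect irreducibility is intrinsic to an algebra, so that $Si$ commutes with intersections and restriction to subclasses.
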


\begin{proof}
By Lemma \ref{subdirirred}.
\end{proof}

Note that, if a PBZ$^{\ast}$ --lattice $\mathbf{L}$ satisfies the SDM, then
$0$ is meet--irreducible in the join--subsemilattice $T(\mathbf{L})$ of
$\mathbf{L}$, but the converse does not hold.

\begin{lemma}
\label{sdmout} Let $\mathbf{A}$ be an antiortholattice without SDM and
$(\mathbf{A}_{i})_{i\in I}$ be a non--empty family of antiortholattices. Then:

\begin{itemize}
\item if $\mathbf{A}\in{\mathrm{S}}_{\mathbb{BZL} }(\prod_{i\in I}%
\mathbf{A}_{i})$, then the family $(\mathbf{A}_{i})_{i\in I}$ contains no
nontrivial antiortholattice with SDM;

\item $\mathbf{A}\in{\mathrm{S}}_{\mathbb{BZL} }(\prod_{i\in I}\mathbf{A}%
_{i})$ iff $\mathbf{A}\in{\mathrm{S}}_{\mathbb{BZL} }(\prod_{i\in
I,\mathbf{A}_{i}\nvDash SDM}\mathbf{A}_{i})$.
\end{itemize}
\end{lemma}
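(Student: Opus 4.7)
The plan is to exploit two facts recalled in the preliminaries: in a nontrivial antiortholattice $\mathbf{B}$ the Brouwer complement is trivial, so $b^\sim = 0_B$ iff $b \ne 0_B$; and an antiortholattice satisfies \mbox{\textup{SDM}} precisely when its zero is meet-irreducible. The failure of \mbox{\textup{SDM}} in $\mathbf{A}$ therefore produces witnesses $x,y \in A \setminus \{0\}$ with $x \wedge y = 0$, and the whole argument is driven by tracking these witnesses through an embedding.

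For the first bullet I would fix such $x,y$ together with a BZ-lattice embedding $\phi: \mathbf{A} \hookrightarrow \prod_{i \in I} \mathbf{A}_i$ and write $\phi(x) = (x_i)_i$, $\phi(y) = (y_i)_i$. Preservation of the lattice operations gives $x_i \wedge y_i = 0_i$ in each coordinate, while preservation of the Brouwer complement together with $x^\sim = y^\sim = 0$ in $\mathbf{A}$ yields $x_i^\sim = y_i^\sim = 0_i$ for every $i \in I$. In any nontrivial $\mathbf{A}_i$ the triviality of its Brouwer complement then forces $x_i, y_i \ne 0_i$. Hence every nontrivial factor $\mathbf{A}_i$ admits the factorisation $0_i = x_i \wedge y_i$ with $x_i, y_i \ne 0_i$, making $0_i$ meet-reducible and so forcing $\mathbf{A}_i \nvDash \mbox{\textup{SDM}}$.

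For the second bullet I would set $J = \{i \in I : \mathbf{A}_i \nvDash \mbox{\textup{SDM}}\}$ and derive both implications from the first bullet. Forwards, the first bullet makes every $\mathbf{A}_i$ with $i \in I \setminus J$ trivial, so $\prod_{i \in I \setminus J} \mathbf{A}_i$ is a singleton, the canonical projection $\prod_{i \in I} \mathbf{A}_i \to \prod_{i \in J} \mathbf{A}_i$ is a BZ-lattice isomorphism, and the embedding transports. Backwards, the same triviality of the factors indexed by $I \setminus J$ makes $\prod_{i \in J} \mathbf{A}_i$ and $\prod_{i \in I} \mathbf{A}_i$ isomorphic via the inverse of that projection, so an embedding of $\mathbf{A}$ into the subproduct yields one into the full product.

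The heart of the argument and the only nontrivial step is the first bullet; the key insight is that the Brouwer complement in an antiortholattice functions as a coordinate-independent ``zero-detector'' that any BZ-lattice embedding must faithfully respect, and this is precisely what pushes the failure of \mbox{\textup{SDM}} into every nontrivial factor. The remaining obstacle is merely notational: making sure that the triviality of the SDM-factors is handled cleanly when collapsing the full product onto the subproduct.
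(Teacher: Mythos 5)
Your treatment of the first bullet is correct and is essentially the paper's own argument: the paper packages the computation in terms of the set of dense elements $D(\mathbf{B})=\{b\in B:b^{\sim}=0\}$, observing that $A\setminus\{0\}=D(\mathbf{A})\subseteq D(\prod_{i\in I}\mathbf{A}_{i})=\prod_{i\in I}D(\mathbf{A}_{i})$ and then reading off the $k$-th coordinate, while you unwind the same inclusion coordinatewise using the Brouwer complement as a zero-detector; these are the same proof. The forward implication of the second bullet is also fine: once every factor satisfying SDM is known to be trivial, the projection onto the remaining factors is an isomorphism of the products.

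The gap is in the backward implication of the second bullet. There you invoke ``the same triviality of the factors indexed by $I\setminus J$'', but that triviality was deduced from the hypothesis $\mathbf{A}\in{\mathrm{S}}_{\mathbb{BZL}}(\prod_{i\in I}\mathbf{A}_{i})$ of the forward direction, which is precisely what the backward direction is supposed to establish; nothing in the backward hypothesis forces the SDM-factors to be trivial. Moreover, the step cannot be repaired, because the implication fails whenever some $\mathbf{A}_{k}$ with $\mathbf{A}_{k}\vDash\mbox{\textup{SDM}}$ is nontrivial: take $I=\{1,2\}$, $\mathbf{A}=\mathbf{A}_{1}=\mathbf{D}_{2}^{2}\oplus\mathbf{D}_{2}^{2}$ (an antiortholattice failing SDM) and $\mathbf{A}_{2}=\mathbf{D}_{3}$; then $\mathbf{A}\in{\mathrm{S}}_{\mathbb{BZL}}(\prod_{i\in I,\mathbf{A}_{i}\nvDash SDM}\mathbf{A}_{i})={\mathrm{S}}_{\mathbb{BZL}}(\mathbf{A}_{1})$, yet the first bullet itself rules out $\mathbf{A}\in{\mathrm{S}}_{\mathbb{BZL}}(\mathbf{A}_{1}\times\mathbf{D}_{3})$, since $\mathbf{D}_{3}$ is a nontrivial antiortholattice with SDM. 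So only the left-to-right half of the stated equivalence actually holds (and that is the only half used later, in the proposition on ${\mathbb{V}}\vee\mathbb{SAOL}$); a genuine equivalence requires adding to the right-hand side the condition that every $\mathbf{A}_{i}$ satisfying SDM be trivial. In fairness, the paper's own proof dismisses the second bullet with ``obviously follows from the first'', so the defect lies in the statement rather than in your strategy; but your write-up should not present the circular step as a proof of the converse.
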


\begin{proof}
The second statement obviously follows from the first. Now assume that
$\mathbf{A}\in{\mathrm{S}}_{\mathbb{BZL} }(\prod_{i\in I}\mathbf{A}_{i})$, let
$J=\{j\in I:\mathbf{A}_{j}\vDash SDM\}$ and assume ex absurdo that there
exists a $k\in J$ such that $\mathbf{A}_{k}$ is nontrivial. We may consider
$A\subseteq\prod_{i\in I}A_{i}$. $\mathbf{A}$ is an antiortholattice that
fails SDM, in particular a nontrivial antiortholattice, hence there exist
$a=(a_{i})_{i\in I},b=(b_{i})_{i\in I}\in A\setminus\{0\}=D(\mathbf{A}%
)=D(\prod_{i\in I}\mathbf{A}_{i})=\prod_{i\in I}D(\mathbf{A}_{i})=\prod_{i\in
I}((A_{i}\setminus\{0\})\cup\{1\})$ such that $a\wedge b=0$, so that
$a_{k}\wedge b_{k}=0$ and $a_{k},b_{k}\in D(\mathbf{A}_{k})=A_{k}%
\setminus\{0\}$, which contradicts the fact that $\mathbf{A}_{k}$ satisfies
the SDM.
\end{proof}

\begin{proposition}
If ${\mathbb{V}} $ is a subvariety of $V_{\mathbb{BZL} }(\mathbb{AOL} )$,
then: ${\mathbb{V}} \vee\mathbb{SAOL} ={\mathbb{V}} \times_{s}\mathbb{SAOL} $
iff $({\mathbb{V}} \vee\mathbb{SAOL} )\cap\mathbb{AOL} =({\mathbb{V}}
\cup\mathbb{SAOL} )\cap\mathbb{AOL} $.
\end{proposition}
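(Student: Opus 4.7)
The plan is to apply the $Si$-criterion already recorded in this subsection, namely that $\mathbb{V}\vee\mathbb{SAOL}=\mathbb{V}\times_{s}\mathbb{SAOL}$ iff $Si(\mathbb{V}\vee\mathbb{SAOL})=Si(\mathbb{V})\cup Si(\mathbb{SAOL})$, and to transfer both sides of the desired biconditional through this criterion. The key preparatory observation is that, because $\mathbb{V}\cup\mathbb{SAOL}\subseteq V_{\mathbb{BZL}}(\mathbb{AOL})$, Lemma~\ref{sdirr} forces every subdirectly irreducible algebra in $\mathbb{V}\vee\mathbb{SAOL}$ to lie in $\mathbb{AOL}$; the reverse inclusion $Si(\mathbb{V})\cup Si(\mathbb{SAOL})\subseteq Si(\mathbb{V}\vee\mathbb{SAOL})$ is automatic, since subdirect irreducibility is a purely congruence-theoretic property and does not depend on the ambient variety.

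For the $(\Leftarrow)$ direction, I would assume the $\mathbb{AOL}$-equality and pick an arbitrary $\mathbf{A}\in Si(\mathbb{V}\vee\mathbb{SAOL})$. By the preparatory observation $\mathbf{A}\in(\mathbb{V}\vee\mathbb{SAOL})\cap\mathbb{AOL}=(\mathbb{V}\cup\mathbb{SAOL})\cap\mathbb{AOL}$, so $\mathbf{A}\in\mathbb{V}$ or $\mathbf{A}\in\mathbb{SAOL}$, whence $\mathbf{A}\in Si(\mathbb{V})\cup Si(\mathbb{SAOL})$. The criterion then produces $\mathbb{V}\vee\mathbb{SAOL}=\mathbb{V}\times_{s}\mathbb{SAOL}$.

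For the $(\Rightarrow)$ direction, the inclusion $\supseteq$ of the $\mathbb{AOL}$-equality is trivial, so I only need the converse. I would take any antiortholattice $\mathbf{A}\in\mathbb{V}\times_{s}\mathbb{SAOL}$, so that up to isomorphism $\mathbf{A}$ embeds as a subdirect product into $\mathbf{B}\times\mathbf{C}$ with $\mathbf{B}\in\mathbb{V}$ and $\mathbf{C}\in\mathbb{SAOL}$. If $\mathbf{A}\vDash SDM$, then $\mathbf{A}\in V_{\mathbb{BZL}}(\mathbb{AOL})\cap\mathbb{SDM}=\mathbb{SAOL}$ and we are done; otherwise I would use Birkhoff to represent $\mathbf{B}$ and $\mathbf{C}$ as subdirect products of their subdirectly irreducible quotients, all of which are antiortholattices by Lemma~\ref{sdirr}, the factors from $\mathbf{B}$ lying in $\mathbb{V}$ and those from $\mathbf{C}$ lying in $\mathbb{SAOL}$ and hence satisfying SDM. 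This embeds $\mathbf{A}$ into a product of antiortholattices, and because $\mathbf{A}$ itself fails SDM while every $\mathbf{C}$-factor satisfies SDM, Lemma~\ref{sdmout} lets me drop all such $\mathbf{C}$-factors, leaving $\mathbf{A}$ embedded in a product of antiortholattices all from $\mathbb{V}$, whence $\mathbf{A}\in\mathbb{V}$.

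The main obstacle is the $(\Rightarrow)$ direction: one has to pass from the a priori loose information "subdirect product of one algebra in $\mathbb{V}$ and one in $\mathbb{SAOL}$" to a uniform embedding into a single product of antiortholattices, which is exactly where Lemma~\ref{sdirr} must be invoked; and the SDM-case split is essential because the "pruning" Lemma~\ref{sdmout} requires $\mathbf{A}$ itself to be an antiortholattice failing SDM.
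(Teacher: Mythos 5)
Your proposal is correct and follows essentially the same route as the paper's proof: the $Si$-criterion for ${\mathbb{V}}\vee{\mathbb{W}}={\mathbb{V}}\times_{s}{\mathbb{W}}$, Lemma \ref{sdirr} to place all relevant subdirectly irreducibles in $\mathbb{AOL}$, and Lemma \ref{sdmout} to prune the SDM factors in the forward direction. The only (harmless) cosmetic differences are that you argue directly rather than ex absurdo and that you also decompose the ${\mathbb{V}}$-factor into subdirectly irreducible antiortholattice quotients before invoking Lemma \ref{sdmout} -- which in fact matches that lemma's hypotheses more faithfully than the paper's own phrasing -- and that you make explicit the case split on whether $\mathbf{A}$ satisfies SDM, which the paper leaves implicit.
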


\begin{proof}
By the above, ${\mathbb{V}}\vee\mathbb{SAOL}={\mathbb{V}}\times_{s}%
\mathbb{SAOL}$ iff $Si({\mathbb{V}}\vee\mathbb{SAOL})=Si({\mathbb{V}}%
\cup\mathbb{SAOL})$. Since $Si(V_{\mathbb{BZL} }(\mathbb{AOL}))\subset
\mathbb{AOL}$, the right-to-left implication holds. Now assume that
$Si({\mathbb{V}}\vee\mathbb{SAOL})=Si({\mathbb{V}}\cup\mathbb{SAOL})$, and
assume ex absurdo that there exists an $\mathbf{L}\in(({\mathbb{V}}%
\vee\mathbb{SAOL})\cap\mathbb{AOL})\setminus({\mathbb{V}}\cup\mathbb{SAOL})$.
Then $\mathbf{L}\in{\mathbb{V}}\times_{s}\mathbb{SAOL}$, hence $\mathbf{L}%
\in{\mathrm{S}}_{\mathbb{BZL} }(\mathbf{A}\times\mathbf{B})$ for some
$\mathbf{A}\in{\mathbb{V}}$ and some $\mathbf{B}\in\mathbb{SAOL}$, therefore
$\mathbf{L}\in{\mathrm{S}}_{\mathbb{BZL} }(\mathbf{A}\times\prod_{j\in
J}\mathbf{B}_{j})$ for some family $(\mathbf{B}_{j})_{j\in J}\subseteq
\mathbb{SAOL}\cap\mathbb{AOL}$. Thus $\mathbf{L}\in{\mathrm{S}}_{\mathbb{BZL}
}(\mathbf{A})$ by Lemma \ref{sdmout}, so that $\mathbf{L}\in{\mathbb{V}}$, a
contradiction. Hence $({\mathbb{V}}\vee\mathbb{SAOL})\cap\mathbb{AOL}%
\subseteq({\mathbb{V}}\cup\mathbb{SAOL})\cap\mathbb{AOL}$.
\end{proof}

\section{Comparison with Other Structures\textbf{\label{thedis}}}

\subsection{Distributive Lattices with Two Unary Operations}

Bounded distributive lattices expanded both by a De Morgan complementation and
a unary operation with Stone-like properties have been the object of rather
intensive investigations over the past decades. In particular, Blyth, Fang and
Wang \cite{BFW} have studied, under the label of \emph{quasi-Stone De Morgan
algebras}, bounded distributive lattices with two unary operations that make
their appropriate reducts, at the same time, De Morgan algebras and
\emph{quasi-Stone algebras} \cite{San, Gai, Cel}. Quasi-Stone De Morgan
algebras that are simultaneously \emph{Stone} algebras and \emph{Kleene}
algebras are known under the name of \emph{Kleene-Stone algebras}; they have
been studied in \cite{GS} and, more recently, in the already quoted
\cite{BFW}. We begin this section by showing that the variety of
antiortholattices generated by the algebra $\mathbf{D}_{5}$ coincides with the
variety of Kleene-Stone algebras. This fact explains the similarity of some
results independently obtained in \cite{BFW, PBZ2, pbz5}.

\begin{definition}
A \emph{quasi-Stone algebra} is an algebra $\mathbf{A}=\left(  A,\wedge
,\vee,^{\sim},0,1\right)  $ of type $\left(  2,2,1,0,0\right)  $ such that
$\left(  A,\wedge,\vee,0,1\right)  $ is a bounded distributive lattice and the
unary operation $^{\sim}$ satisfies the following conditions for all $a,b\in
A$:%
\[%
\begin{tabular}
[c]{clcl}%
\textbf{QS1} & $0^{\sim}=1$ and $1^{\sim}=0$; & \textbf{QS4} & $a\leq
a^{\sim\sim}$;\\
\textbf{QS2} & $\left(  a\vee b\right)  ^{\sim}=a^{\sim}\wedge b^{\sim}$; &
\textbf{QS5} & $a^{\sim}\vee a^{\sim\sim}=1$.\\
\textbf{QS3} & $\left(  a\wedge b^{\sim}\right)  ^{\sim}=a^{\sim}\vee
b^{\sim\sim}$; &  &
\end{tabular}
\
\]
A quasi-Stone algebra $\mathbf{A}$ is a \emph{Stone algebra} if it
additionally satisfies SDM.
\end{definition}

The following useful lemma contains results to be found in \cite{San} and
\cite{BFW}:

\begin{lemma}
Let $\mathbf{A}=\left(  A,\wedge,\vee,^{\sim},0,1\right)  $ be a quasi-Stone
algebra. Then:

\begin{enumerate}
\item $\mathbf{A}$ satisfies the following conditions for all $a,b\in A$:%
\[%
\begin{tabular}
[c]{clcl}%
\textbf{QS6} & if $a\leq b$, then $b^{\sim}\leq a^{\sim}$; & \textbf{QS8} &
$a^{\sim\sim}{}^{\sim}=a^{\sim}$;\\
\textbf{QS7} & $a\wedge a^{\sim}=0$; & \textbf{QS9} & $a\wedge b^{\sim}=0$ iff
$a\leq b^{\sim\sim}$.
\end{tabular}
\]

\item The set $B\left(  \mathbf{A}\right)  =\left\{  a^{\sim}:a\in A\right\}
=\left\{  a\in A:a=a^{\sim\sim}\right\}  $ is a Boolean subuniverse of
$\mathbf{A}$.
\end{enumerate}
\end{lemma}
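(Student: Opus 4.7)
The plan is to treat the four subitems of Part 1 in the order QS6, QS7, QS8, QS9, since each builds on the previous ones, and then to derive Part 2 from the combined toolkit QS1--QS9. Throughout, the underlying bounded distributive lattice structure and the axioms QS1--QS5 are all that is available, so every step must be reduced to manipulation of these.

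For QS6, I would note that $a\leq b$ gives $a\vee b=b$, and then apply QS2 to get $b^{\sim}=(a\vee b)^{\sim}=a^{\sim}\wedge b^{\sim}$, so $b^{\sim}\leq a^{\sim}$. For QS7, I would instantiate QS3 with $b=a$ to obtain $(a\wedge a^{\sim})^{\sim}=a^{\sim}\vee a^{\sim\sim}$, which is $1$ by QS5; applying $\cdot^{\sim}$ and QS1 then gives $(a\wedge a^{\sim})^{\sim\sim}=0$, and QS4 forces $a\wedge a^{\sim}\leq(a\wedge a^{\sim})^{\sim\sim}=0$. For QS8, QS4 applied to $a^{\sim}$ gives $a^{\sim}\leq a^{\sim\sim\sim}$, while QS4 applied to $a$ combined with QS6 yields $a^{\sim\sim\sim}\leq a^{\sim}$. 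For QS9, in the forward direction I would use QS3 and QS1 to obtain $a^{\sim}\vee b^{\sim\sim}=(a\wedge b^{\sim})^{\sim}=0^{\sim}=1$, then expand $a=a\wedge 1=a\wedge(a^{\sim}\vee b^{\sim\sim})$ by distributivity and use QS7 to conclude $a=a\wedge b^{\sim\sim}\leq b^{\sim\sim}$; the backward direction is immediate from QS7.

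For Part 2, I would first establish that the two descriptions of $B(\mathbf{A})$ coincide: $a^{\sim\sim\sim}=a^{\sim}$ by QS8 shows every element of the form $b^{\sim}$ is a fixed point of $\cdot^{\sim\sim}$, and conversely $a=a^{\sim\sim}=(a^{\sim})^{\sim}$ exhibits any fixed point as of this form. Closure of $B(\mathbf{A})$ under $\cdot^{\sim}$ is then immediate, and $0,1\in B(\mathbf{A})$ comes from QS1. The delicate step is closure under $\wedge$: given $a,b\in B(\mathbf{A})$, write $b=c^{\sim}$ with $c=b^{\sim}$ and apply QS3 to get $(a\wedge b)^{\sim}=(a\wedge c^{\sim})^{\sim}=a^{\sim}\vee c^{\sim\sim}=a^{\sim}\vee b^{\sim}$; applying QS2 and using that $a,b$ are fixed by $\cdot^{\sim\sim}$ gives $(a\wedge b)^{\sim\sim}=a\wedge b$. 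Closure under $\vee$ then follows from QS2 and the previous step applied to $a^{\sim},b^{\sim}\in B(\mathbf{A})$. Finally, the complementation laws $a\wedge a^{\sim}=0$ and $a\vee a^{\sim}=a^{\sim\sim}\vee a^{\sim}=1$ hold on $B(\mathbf{A})$ by QS7 and QS5 respectively, so $B(\mathbf{A})$ is a bounded distributive lattice with complementation $\cdot^{\sim}$, hence a Boolean subalgebra.

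The main obstacle I anticipate is the $\wedge$-closure of $B(\mathbf{A})$: QS3 has a slightly asymmetric shape, and one must first rewrite the second argument as $c^{\sim}$ to apply it. Once that computation is in place, the De Morgan identity $(a\wedge b)^{\sim}=a^{\sim}\vee b^{\sim}$ restricted to $B(\mathbf{A})$ makes everything else routine.
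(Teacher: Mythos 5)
Your proof is correct in every step. Note that the paper itself does not prove this lemma: it is stated as a collection of results imported from Sankappanavar--Sankappanavar and Blyth--Fang--Wang, so there is no in-paper argument to compare against. Your derivations of QS6--QS9 from QS1--QS5 are the standard ones (QS6 from QS2 via $a\vee b=b$; QS7 by instantiating QS3 at $b=a$ and combining QS5, QS1 and QS4; QS8 from QS4 and QS6; QS9 from QS3, QS1, distributivity and QS7), and your treatment of Part 2 is sound: you correctly identify the one non-obvious point, namely $\wedge$-closure of $B(\mathbf{A})$, and handle it by rewriting $b=c^{\sim}$ so that QS3 yields the restricted De Morgan law $(a\wedge b)^{\sim}=a^{\sim}\vee b^{\sim}$ for $b\in B(\mathbf{A})$, after which $\vee$-closure and complementation (via QS7 and QS5) follow. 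No gaps.
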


Clearly, in case $\mathbf{A}$ is a Stone algebra, the condition QS9 can be
strengthened to the pseudocomplementation equivalence:%
\[%
\begin{tabular}
[c]{cl}%
\textbf{S1} & $a\wedge b=0$ iff $a\leq b^{\sim}$ for all $a,b\in A$.
\end{tabular}
\
\]

\begin{definition}
A \emph{quasi-Stone De Morgan algebra} is an algebra $\mathbf{A}%
=(A,\wedge,\vee,^{\prime},$\linebreak$^{\sim},0,1)$ of type $\left(
2,2,1,1,0,0\right)  $ such that $\left(  A,\wedge,\vee,^{\prime},0,1\right)  $
is a De Morgan algebra, $\left(  A,\wedge,\vee,^{\sim},0,1\right)  $ is a
quasi-Stone algebra, and $a^{\prime}\in B\left(  \mathbf{A}\right)  $ whenever
$a\in B\left(  \mathbf{A}\right)  $. If $\left(  A,\wedge,\vee,^{\prime
},0,1\right)  $ is a Kleene algebra and $\left(  A,\wedge,\vee,^{\sim
},0,1\right)  $ is a (quasi-)Stone algebra, then $\mathbf{A}$ is said to be a
\emph{Kleene-(quasi-)Stone algebra}.
\end{definition}

\begin{lemma}
\label{mola}\cite{BFW} If $\mathbf{A}$ is a quasi-Stone De Morgan algebra,
then for all $a\in A$ we have that $a^{\sim\sim}=a^{\sim\prime\sim\prime}$.
\end{lemma}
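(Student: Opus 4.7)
The plan is to exploit the fact that both $a^{\sim}$ and $a^{\sim\sim}$ lie in the Boolean subuniverse $B(\mathbf{A})$, that $B(\mathbf{A})$ is closed under the De Morgan involution $'$, and that on $B(\mathbf{A})$ the operation $\sim$ coincides with the Boolean complement. Once these ingredients are lined up, the identity $a^{\sim\sim}=a^{\sim\prime\sim\prime}$ will drop out by applying $'$ to the Stone-type identities QS5 and QS7 (applied to $a^{\sim}$) and recognising a pair of Boolean complements in $B(\mathbf{A})$.

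First I would note that $a^{\sim}\in B(\mathbf{A})$ by QS8, hence $a^{\sim\sim}=(a^{\sim})^{\sim}\in B(\mathbf{A})$ as well (either directly from $B(\mathbf{A})=\{x^{\sim}:x\in A\}$ or by reapplying QS8). By the defining closure condition of a quasi-Stone De Morgan algebra, $B(\mathbf{A})$ is closed under $'$, so $a^{\sim\prime}$ and $a^{\sim\sim\prime}$ both lie in $B(\mathbf{A})$.

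Next I would apply the involution $'$ to the two key identities for $a^{\sim}$, namely QS7 giving $a^{\sim}\wedge a^{\sim\sim}=0$ and QS5 giving $a^{\sim}\vee a^{\sim\sim}=1$. Since $'$ is a De Morgan involution, these transform respectively into
\[
a^{\sim\prime}\vee a^{\sim\sim\prime}=1\qquad\text{and}\qquad a^{\sim\prime}\wedge a^{\sim\sim\prime}=0.
\]
Because $B(\mathbf{A})$ is a Boolean subalgebra of $\mathbf{A}$ in which $\sim$ acts as the (unique) Boolean complement, the displayed pair of equations says precisely that $a^{\sim\sim\prime}$ is the Boolean complement of $a^{\sim\prime}$ in $B(\mathbf{A})$, i.e.\ $a^{\sim\sim\prime}=(a^{\sim\prime})^{\sim}=a^{\sim\prime\sim}$. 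Applying $'$ to both sides and using $x''=x$ yields $a^{\sim\sim}=a^{\sim\prime\sim\prime}$, which is what we want.

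The only real obstacle is the conceptual step of recognising that $'$ and $\sim$ agree on $B(\mathbf{A})$; this is not literally stated in the lemma recalled from \cite{San,BFW}, but it is an immediate consequence of the uniqueness of Boolean complements combined with the closure of $B(\mathbf{A})$ under $'$. Once this is observed, the remainder is a one-line manipulation using QS5, QS7 and De Morgan.
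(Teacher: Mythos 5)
Your proof is correct. Note that the paper itself gives no proof of this lemma --- it is simply cited from \cite{BFW} --- so there is no in-paper argument to compare against; what you have supplied is a valid self-contained derivation. The chain of reasoning checks out: $a^{\sim}\in B(\mathbf{A})$ and hence $a^{\sim\sim}\in B(\mathbf{A})$; the closure of $B(\mathbf{A})$ under $'$ puts $a^{\sim\prime}$ and $a^{\sim\sim\prime}$ in $B(\mathbf{A})$; applying the dual automorphism $'$ to QS7 (instantiated at $a^{\sim}$) and to QS5 shows that $a^{\sim\sim\prime}$ is a lattice complement of $a^{\sim\prime}$; and since $a^{\sim\prime}\in B(\mathbf{A})$, QS5, QS7 and $b=b^{\sim\sim}$ for $b\in B(\mathbf{A})$ show that $a^{\sim\prime\sim}$ is also a complement of $a^{\sim\prime}$, so uniqueness of complements in a distributive lattice gives $a^{\sim\sim\prime}=a^{\sim\prime\sim}$, and applying $'$ finishes. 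One small expository quibble: the fact that $\sim$ restricts to the Boolean complement on $B(\mathbf{A})$ does not actually require closure of $B(\mathbf{A})$ under $'$ (it follows from QS5, QS7 and the characterisation $B(\mathbf{A})=\{a:a=a^{\sim\sim}\}$ alone); the closure under $'$ is needed only to ensure that $a^{\sim\prime}$ lies in $B(\mathbf{A})$ so that this observation applies to it. That is where the distinctively ``De Morgan'' hypothesis enters, and your proof does use it in the right place.
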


Recall from Proposition \ref{distnsaol} that the variety generated by the
$5$-element antiortholattice chain $\mathbf{D}_{5}$ is axiomatised relative to
$\mathbb{PBZL}^{\mathbb{\ast}}$ by the lattice distribution axiom DIST and the
Strong De Morgan law SDM (J0 easily follows from these assumptions in the
context of $\mathbb{PBZL}^{\mathbb{\ast}}$). We now show that:

\begin{theorem}
$V_{\mathbb{BZL} }\left(  \mathbf{D}_{5}\right)  $ coincides with the variety
of Kleene-Stone algebras.
\end{theorem}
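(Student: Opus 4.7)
The plan is to prove the two inclusions separately, exploiting the fact that Kleene-Stone algebras form a variety in the same similarity type $(2,2,1,1,0,0)$ as $\mathbb{BZL}$. For the inclusion $V_{\mathbb{BZL}}(\mathbf{D}_{5})\subseteq\mathbb{KS}$ (writing $\mathbb{KS}$ for the variety of Kleene-Stone algebras), it suffices to verify that $\mathbf{D}_{5}$ itself satisfies the Kleene-Stone axioms. Equipped with its trivial Brouwer complement ($0^{\sim}=1$, $x^{\sim}=0$ for $x\neq 0$), the chain $\mathbf{D}_{5}$ manifestly satisfies QS1--QS5; SDM holds because the bottom is meet-irreducible in a chain; the Kleene algebra structure on $\mathbf{D}_{5}$ is the standard one on the $5$--chain; and the closure condition $a\in B(\mathbf{D}_{5})\Rightarrow a'\in B(\mathbf{D}_{5})$ is trivial since $B(\mathbf{D}_{5})=\{0,1\}$.

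For the converse inclusion $\mathbb{KS}\subseteq V_{\mathbb{BZL}}(\mathbf{D}_{5})$, the strategy is to show that every Kleene-Stone algebra $\mathbf{A}$ is a distributive PBZ$^{\ast}$--lattice satisfying SDM. Granted this, Proposition \ref{distnsaol} applies, and the observation that such an $\mathbf{A}$ automatically satisfies J0 (for $y^{\sim}\vee y^{\sim\sim}=1$ holds in every PBZ$^{\ast}$--lattice because $y^{\sim}$ is sharp, so distributivity gives $(x\wedge y^{\sim})\vee(x\wedge y^{\sim\sim})=x\wedge 1=x$), together with the axiomatisation from \cite{rgcmfp} of $V_{\mathbb{BZL}}(\mathbb{AOL})$ by J0 over $\mathbb{PBZL}^{\ast}$, yields $\mathbf{A}\in\mathbb{SAOL}\cap\mathbb{DIST}=V_{\mathbb{BZL}}(\mathbf{D}_{5})$.

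The key technical step is to verify that $\mathbf{A}$ is a BZ--lattice; all axioms except $a^{\sim\sim}\approx(a^{\sim})'$ are immediate from the quasi-Stone axioms and the Kleene inequality. This remaining identity reduces to showing that the Kleene complement $'$ and the Stone complement $^{\sim}$ coincide on the Boolean skeleton $B(\mathbf{A})$. Writing $g$ for $^{\sim}\!\!\restriction_{B(\mathbf{A})}$ and $f$ for $'\!\restriction_{B(\mathbf{A})}$, both are order-reversing involutions of the Boolean algebra $B(\mathbf{A})$. Lemma \ref{mola} applied to an arbitrary $b\in B(\mathbf{A})$ yields $fg=gf$, so $h:=g\circ f$ is a lattice automorphism and an involution of $B(\mathbf{A})$. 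Instantiating the Kleene inequality $x\wedge x'\leq y\vee y'$ in $B(\mathbf{A})$ with $x=h(c)$ and $y=c$, and using $f=g\circ h$, one obtains $h(c)\wedge\neg c\leq c\vee\neg h(c)=\neg(h(c)\wedge\neg c)$; since in a Boolean algebra $z\leq\neg z$ forces $z=0$, this gives $h(c)\leq c$. The symmetric choice $x=c$, $y=h(c)$ yields $c\leq h(c)$, so $h=\mathrm{id}$ and $f=g$ on $B(\mathbf{A})$, as required.

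Once the BZ--lattice structure is in place, $(*)$ is a direct consequence of SDM on setting $y=x'$, and paraorthomodularity is obtained by a short distributive argument: if $a\leq b$ and $a'\wedge b=0$, then $a\wedge a'\leq b\wedge a'=0$, so $a$ is sharp with respect to $'$, whence $a\vee a'=1$ by the De~Morgan law, and distributivity gives $b=b\wedge(a\vee a')=(b\wedge a)\vee 0=a$. The main obstacle is precisely the coincidence of $'$ and $^{\sim}$ on $B(\mathbf{A})$: this is where the interplay between the Kleene and the Stone structures is genuinely used, as the Kleene inequality is what rules out nontrivial dual automorphisms of $B(\mathbf{A})$ that commute with the Boolean complement; everything else is bookkeeping.
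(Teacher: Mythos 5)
Your proposal is correct, and its overall skeleton matches the paper's: check that $\mathbf{D}_{5}$ satisfies the Kleene--Stone axioms, and conversely reduce the inclusion of Kleene--Stone algebras in $V_{\mathbb{BZL}}(\mathbf{D}_{5})$ to verifying the PBZ$^{\ast}$--axioms, since DIST and SDM (plus J0, which follows) axiomatise $V_{\mathbb{BZL}}(\mathbf{D}_{5})$ relative to $\mathbb{PBZL}^{\ast}$ by Proposition \ref{distnsaol}; the treatment of $(\ast)$ via SDM and of paraorthomodularity via distributivity is also the same. Where you genuinely diverge is the one nontrivial BZ--axiom $a^{\sim\sim}\approx a^{\sim\prime}$. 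The paper proves it by a direct inequality chase: from QS5 one gets $a^{\sim\prime}\wedge a^{\sim\sim\prime}=0$, then S1, QS4, QS6, QS8 and Lemma \ref{mola} are combined to squeeze $a^{\sim\sim}$ and $a^{\sim\prime}$ between each other. You instead observe that the identity is equivalent to the coincidence of $\cdot^{\prime}$ and $\cdot^{\sim}$ on the Boolean skeleton $B(\mathbf{A})$, note that both restrict to involutions of $B(\mathbf{A})$ (with $\cdot^{\sim}$ the Boolean complementation there, by QS5 and QS7), derive their commutativity from Lemma \ref{mola}, and then use the Kleene inequality to show that the resulting order--automorphism $h=\,^{\sim}\circ\,^{\prime}$ of $B(\mathbf{A})$ is the identity, via the standard fact that $z\leq\neg z$ forces $z=0$ in a Boolean algebra. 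I checked the details and they go through ($f=g\circ h$ does give $h(c)^{\prime}=\neg c$ and $c^{\prime}=\neg h(c)$, whence $h(c)\leq c$ and symmetrically $c\leq h(c)$). Your route is longer but more structural: it isolates exactly where the Kleene condition interacts with the quasi--Stone structure, namely in forbidding a nontrivial involutive automorphism of the skeleton, whereas the paper's computation is shorter but leaves that mechanism implicit.
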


\begin{proof}
It is readily seen that $\mathbf{D}_{5}$ satisfies all the defining conditions
of Kleene-Stone algebras. Conversely, by the above remark, it will be
sufficient to show that Kleene-Stone algebras satisfy all the axioms of
PBZ$^{\ast}$ --lattices, since they are clearly distributive as lattices and
satisfy SDM by definition. We confine ourselves to the sole nontrivial items.
(i) The condition $\left(  \ast\right)  $, $\left(  x\wedge x^{\prime}\right)
^{\sim}=x^{\sim}\vee x^{\prime\sim}$ directly follows from SDM. (ii) We show
that $a^{\sim\sim}=a^{\sim\prime}$. By QS5, $a^{\sim}\vee a^{\sim\sim}=1$,
whence $a^{\sim\prime}\wedge a^{\sim\sim\prime}=0$. By S1, $a^{\sim\sim\prime
}\leq a^{\sim\prime\sim}$, whence, given the fact that $a^{\sim\sim}\in
B\left(  \mathbf{A}\right)  $,%
\[
a^{\sim\prime}\leq_{\left(  QS4\right)  }a^{\sim\prime\sim\sim}\leq_{\left(
QS6\right)  }a^{\sim\sim\prime\sim}=a^{\sim\sim}\text{.}%
\]
From this inequality, QS6 and QS8 we obtain that $a^{\sim}=a^{\sim\sim}%
{}^{\sim}\leq a^{\sim\prime\sim}$ and thus, by Lemma \ref{mola}, $a^{\sim\sim
}=a^{\sim\prime\sim\prime}\leq a^{\sim\prime}$. The converse inequality
follows from S1 and the fact that $a^{\sim}\in B\left(  \mathbf{A}\right)  $.
(iii) To round up our proof, it will suffice to show that any Kleene algebra
is paraorthomodular. Thus, let $a\leq b$ and $a^{\prime}\wedge b=0$. Then
$a^{\prime}\wedge a\leq a^{\prime}\wedge b=0$, whence $a$ is sharp and thus
$a\vee a^{\prime}=1$. As $a\wedge b=a$ and $a^{\prime}\wedge b=0$,
distributivity implies that%
\[
a=\left(  a\wedge b\right)  \vee\left(  a^{\prime}\wedge b\right)  =\left(
a\vee a^{\prime}\right)  \wedge b=1\wedge b=b.
\]

\end{proof}

The question as to whether the distributive subvariety $\mathbb{DIST}$ of
$V_{\mathbb{BZL}}\left(  \mathbb{AOL}\right)  $ coincides with the variety of
Kleene-quasi-Stone algebras is of a certain interest. The next Example answers
this problem in the negative.

\begin{example}
The BZ-lattice $\mathbf{BZ}_{4}$ (see \cite[Figure 5]{GLP1+}) is a
Kleene-quasi-Stone algebra, yet it is not even a member of $\mathbb{PBZL}%
^{\mathbb{\ast}}$. In fact, call $a$ and $a^{\prime}$ its two atoms. We have
that:%
\[
\left(  a\wedge a^{\prime}\right)  ^{\sim}=0^{\sim}=1\neq0=a^{\sim}\vee
a^{\prime\sim}.
\]

\end{example}

Finally, we prove that the variety generated by the $3$-element
antiortholattice chain $\mathbf{D}_{3}$ is a discriminator variety
\cite{Werner}.

\begin{proposition}
$V_{\mathbb{BZL} }\left(  \mathbf{D}_{3}\right)  $ is a discriminator variety.
\end{proposition}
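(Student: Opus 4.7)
The plan is to exhibit a ternary BZ-lattice term $t(x,y,z)$ that realizes the ternary discriminator on every subdirectly irreducible member of $V_{\mathbb{BZL}}(\mathbf{D}_3)$; the conclusion then follows from the Pixley--Werner characterisation of discriminator varieties (a variety $\mathbb{V}$ is a discriminator variety iff some term $t$ is the discriminator on every $\mathbf{A}\in Si(\mathbb{V})$). By Lemma~\ref{smallaolchains}.(\ref{smallaolchains2}), the only subdirectly irreducible members of $V_{\mathbb{BZL}}(\mathbf{D}_3)$, up to isomorphism, are the three chains $\mathbf{D}_1$, $\mathbf{D}_2$, $\mathbf{D}_3$, so it suffices to produce a single term that acts as the discriminator on each of them.

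On $\mathbf{D}_3=\{0,a,1\}$ the Brouwer complement is the trivial one of an antiortholattice, whence direct calculation gives $\Diamond x = x^{\sim\sim}=1$ iff $x\neq 0$ and $\square x = x^{\prime\sim}=1$ iff $x=1$. I therefore define the $\{0,1\}$-valued characteristic terms
$$\chi_0(x) = (\Diamond x)^{\prime}, \qquad \chi_a(x) = \Diamond x \wedge (\square x)^{\prime}, \qquad \chi_1(x) = \square x,$$
which on $\mathbf{D}_3$ pick out exactly the elements $0$, $a$, $1$ respectively. Using them, I assemble an equality tester
$$E(x,y) = (\chi_0(x) \wedge \chi_0(y)) \vee (\chi_a(x) \wedge \chi_a(y)) \vee (\chi_1(x) \wedge \chi_1(y)),$$
which evaluates to $1$ if $x=y$ and to $0$ otherwise on $\mathbf{D}_3$; on $\mathbf{D}_2$ the middle summand vanishes identically but the other two still detect $0$ and $1$ correctly, and on the trivial $\mathbf{D}_1$ there is nothing to verify. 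Finally I set
$$t(x,y,z) = (E(x,y) \wedge z) \vee (E(x,y)^{\prime} \wedge x).$$
Since $E(x,y)\in\{0,1\}$ on every subdirectly irreducible member of $V_{\mathbb{BZL}}(\mathbf{D}_3)$, $t$ selects $z$ when $x=y$ and $x$ when $x\neq y$, i.e.\ acts as the ternary discriminator there, and the variety is a discriminator variety.

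The main obstacle is purely syntactic: finding characteristic terms inside the BZ-lattice signature that separate the three elements of $\mathbf{D}_3$. The saving observation is that on an antiortholattice chain the Brouwer complement is trivial, so that $\Diamond$ and $\square$ between them already pin down whether an element is $0$, in the interior, or $1$; once this is noted, the remainder of the construction is standard bookkeeping and the Pixley--Werner theorem closes the argument.
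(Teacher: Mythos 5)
Your proof is correct and follows essentially the same strategy as the paper's: build a $\{0,1\}$-valued term testing (in)equality of $x$ and $y$ out of $\Diamond$ and $\square$ (which are computable on an antiortholattice chain because the Brouwer complement is trivial), then splice $x$ and $z$ together with the standard lattice switch. The paper's tester $e(x,y)=(x^{\sim}\wedge\Diamond y)\vee(y^{\sim}\wedge\Diamond x)\vee(\square x\wedge(\square y)^{\sim})\vee(\square y\wedge(\square x)^{\sim})$ detects inequality rather than equality and is verified only on the generating algebra $\mathbf{D}_{3}$ (which suffices, since a discriminator term on a generating algebra makes the generated variety a discriminator variety), but these are cosmetic differences.
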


\begin{proof}
Clearly, it suffices to find a ternary term that realises the discriminator
function on $\mathbf{D}_{3}$. Let first%
\[
e\left(  x,y\right)  =\left(  x^{\sim}\wedge\Diamond y\right)  \vee\left(
y^{\sim}\wedge\Diamond x\right)  \vee\left(  \square x\wedge\left(  \square
y\right)  ^{\sim}\right)  \vee\left(  \square y\wedge\left(  \square x\right)
^{\sim}\right)  \text{.}%
\]
It is a routinary matter to check that for all $a,b\in D_{3}$, $e^{\mathbf{D}%
_{3}}\left(  a,a\right)  =0$ and $e^{\mathbf{D}_{3}}\left(  a,b\right)  =1$ if
$a\neq b$. It follows that%
\[
t\left(  x,y,z\right)  =\left(  e\left(  x,y\right)  \vee z\right)
\wedge\left(  e\left(  x,y\right)  ^{\prime}\vee x\right)
\]
realises the discriminator function on $\mathbf{D}_{3}$.
\end{proof}

Observe that the algebra $\mathbf{D}_{3}$ fails to be primal, because it has
the nontrivial proper subuniverse $\left\{  0,1\right\}  $. Nonetheless, upon
identifying $D_{3}$ with the set of rational numbers $\left\{  0,\frac{1}%
{2},1\right\}  $, the truncated sum operation is definable as follows:%
\[
x\oplus y=\min\left(  1,x+y\right)  =\left(  x\vee\Diamond y\right)
\wedge\left(  y\vee\Diamond x\right)  \text{.}%
\]
It is easy to check that, upon expanding its signature by this binary
operation, $\mathbf{D}_{3}$ becomes an instance of a \emph{De Morgan
Brouwer-Zadeh MV-algebra} \cite{Cat1, Cat2} and, therefore, generates a
subvariety of such. The interest of this remark lies in the fact that the
variety of De Morgan Brouwer-Zadeh MV-algebras is known to be term-equivalent
to other well-known varieties of algebras of logic, including Heyting-Wajsberg
algebras, Stonean MV-algebras and MV algebras with Baaz Delta \cite{Cat3}. In
the next section, we will see that $V_{\mathbb{BZL}}\left(  \mathbf{D}%
_{3}\right)  $ is term-equivalent to another well-known variety of algebras of logic.

\subsection{Modal Algebras}

The standard examples of modal algebras (monadic algebras or interior
algebras, to name a few examples) were devised as the algebraic counterparts
of normal modal logics, which are extensions of classical propositional logic
--- therefore, they all have a Boolean algebra reduct. There is a thriving
literature, however, on \textquotedblleft nonstandard\textquotedblright\ modal
algebras based on generic De Morgan algebras: see below for the appropriate
references. The aim of this section is to chart this area of research and
locate term-equivalent counterparts of some distributive subvarieties of
PBZ$^{\ast}$ --lattices on this map. We consider algebras $\mathbf{M}=\left(
M,\wedge,\vee,^{\prime},\Diamond,0,1\right)  $ of type $\left(
2,2,1,1,0,0\right)  $, where $\left(  M,\wedge,\vee,^{\prime},0,1\right)  $ is
a De Morgan algebra. We assume that $^{\prime}$ binds stronger than $\Diamond
$, to reduce the number of parentheses. The following list of identities will
be crucial for defining the varieties that follow; henceforth, $\square x$ is
short for $\left(  \Diamond x^{\prime}\right)  ^{\prime}$.

\begin{description}
\item[M1] $\Diamond0\approx0$

\item[M2] $\Diamond\left(  x\vee y\right)  \approx\Diamond x\vee\Diamond y$

\item[M3] $x\leq\Diamond x$

\item[M4] $\Diamond x\approx\Diamond\Diamond x$

\item[M5] $\Diamond x\wedge\left(  \Diamond x\right)  ^{\prime}\approx0$

\item[M6] $\Diamond x\approx\square\Diamond x$

\item[M7] $\Diamond\left(  x\wedge x^{\prime}\right)  \approx\Diamond
x\wedge\Diamond x^{\prime}$

\item[M8] $x^{\prime}\vee\Diamond x\approx1$

\item[M9] $\Diamond\left(  x\wedge y\right)  \approx\Diamond x\wedge\Diamond
y$

\item[M10] $x\wedge x^{\prime}\approx\Diamond x\wedge x^{\prime}$
\end{description}

\begin{definition}
\label{firstbunch}

\begin{enumerate}
\item A $\Diamond$\emph{-De Morgan algebra} is an algebra $\mathbf{M}%
=(M,\wedge,\vee,^{\prime},\Diamond,\linebreak0,1)$ of type $\left(
2,2,1,1,0,0\right)  $, where $\left(  M,\wedge,\vee,^{\prime},0,1\right)  $ is
a De Morgan algebra and the identities M1 and M2 are satisfied.

\item A \emph{topological quasi-Boolean algebra} is a $\Diamond$-De Morgan
algebra satisfying the identities M3 and M4.

\item A \emph{classical }$\Diamond$\emph{-De Morgan algebra} is a topological
quasi-Boolean algebra satisfying the identity M5.

\item A \emph{monadic De Morgan algebra} is a classical $\Diamond$-De Morgan
algebra satisfying the identity M6.
\end{enumerate}
\end{definition}

$\Diamond$-De Morgan algebras and classical $\Diamond$-De Morgan algebras were
introduced in dual form by Sergio Celani \cite[pp. 253-254]{Cel}. Topological
quasi-Boolean algebras were first investigated by Banerjee and Chakraborty in
the context of the theory of rough sets \cite{Ban}. The authors of \cite{Saha}
also introduce, under the label of \emph{topological quasi-Boolean algebras
5}, a subvariety of topological quasi-Boolean algebras that satisfy M6 but not
M5. Clearly, topological quasi-Boolean algebras are meant to be a nonclassical
counterpart of interior algebras, while monadic De Morgan algebras can be
viewed as a nonclassical counterpart of monadic algebras. Condition M5, which
is of course trivial once our algebras have a Boolean nonmodal reduct, is
there to restore the Boolean behaviour of the nonmodal operators, when applied
to arguments of the form $\Diamond x$. Observe that all classical $\Diamond
$-De Morgan algebras satisfy the identity M8 \cite[Lemma 2.3]{Cel}.

There are several ways to strengthen the defining conditions of classical
$\Diamond$-De Morgan algebras with an eye to obtaining varieties with more
interesting properties.

\begin{enumerate}
\item A possible avenue is to impose on the possibility operator properties
that would determine a collapse of modality when the underlying structures are
Boolean algebras. For example,\emph{ tetravalent modal algebras} \cite{Mont,
Lour} are classical $\Diamond$-De Morgan algebras that satisfy M10, although
they are usually presented in a streamlined axiomatisation containing only the
axioms for De Morgan algebras plus M8 and M10. They form a discriminator
variety, generated by a quasiprimal four-element algebra (see item (iv) of the
proof of Theorem \ref{menarini} below).

\item On the other hand, one can enforce what Cattaneo et al. \cite{Cat4} call
a \textquotedblleft deviant\textquotedblright\ behaviour of the possibility
operator, requesting that it distribute not only over joins, but over meets as
well. \emph{Involutive Stone algebras} (\cite{Ciggal}; cp. also \cite{Cat4},
where these structures are called \emph{MDS5-algebras}), thus, are classical
$\Diamond$-De Morgan algebras satisfying M9. It is known that both involutive
Stone algebras and tetravalent modal algebras are monadic De Morgan algebras:
see \cite{Ciggal} and \cite[Proposition 1.2]{FR}, respectively.
\end{enumerate}

We now introduce the modal analogue of distributive PBZ$^{\ast}$ --lattices.

\begin{definition}
A \emph{weak \L ukasiewicz algebra} is a classical $\Diamond$-De Morgan
algebra $\mathbf{M}=\left(  M,\wedge,\vee,^{\prime},\Diamond,0,1\right)  $
such that its $\Diamond$-free reduct is a Kleene algebra and the identity M7
is satisfied.
\end{definition}

\begin{theorem}
\begin{enumerate}
\item Every weak \L ukasiewicz algebra $\mathbf{M}$ is a monadic De Morgan algebra.

\item The variety of weak \L ukasiewicz algebras is term-equivalent to
$\mathbb{DIST}$.
\end{enumerate}
\end{theorem}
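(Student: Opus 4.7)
The plan is to handle both parts simultaneously by fixing the translation $\Diamond x := x^{\sim\sim}$ in the direction $\mathbb{DIST}\to$ weak \L ukasiewicz algebras, and $x^{\sim} := (\Diamond x)'$ in the opposite direction; once each direction preserves the respective axioms and the two assignments are seen to be mutually inverse, (2) follows, and (1) will be needed as a lemma for the return trip.

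For part (1), I would derive M6 ($\Diamond x \approx \square \Diamond x$) directly from the weak \L ukasiewicz axioms. In any involution lattice, De Morgan gives $a\wedge a' = 0$ iff $a\vee a' = 1$, so M5 upgrades to $\Diamond x \vee (\Diamond x)' = 1$. Instantiating M7 at $y := \Diamond x$ and using M4 and M1 yields
\[
\Diamond x \wedge \Diamond(\Diamond x)' \; = \; \Diamond\Diamond x \wedge \Diamond(\Diamond x)' \; = \; \Diamond(\Diamond x \wedge (\Diamond x)') \; = \; \Diamond 0 \; = \; 0.
\]
Distributivity of the Kleene reduct then gives
\[
\Diamond(\Diamond x)' \; = \; \Diamond(\Diamond x)' \wedge (\Diamond x \vee (\Diamond x)') \; = \; \Diamond(\Diamond x)' \wedge (\Diamond x)' \; \leq \; (\Diamond x)',
\]
while the reverse inequality is M3 applied to $(\Diamond x)'$. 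Hence $\square \Diamond x = (\Diamond(\Diamond x)')' = (\Diamond x)'' = \Diamond x$.

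For part (2), the direction $\mathbb{DIST}\to$ weak \L ukasiewicz is a routine check: M1, M3, M4, M5, M7 are immediate translations of standard $\mathbb{BZL}$ identities, of $(\ast)$, and of the identities $x^{\sim\sim\sim}=x^{\sim}$ and $x^{\sim\sim}=x^{\sim\prime}$ from the preliminaries. The only delicate item is M2, which reduces to $(x^{\sim}\wedge y^{\sim})^{\sim}= x^{\sim\sim}\vee y^{\sim\sim}$: this holds because $x^{\sim},y^{\sim}$ lie in $S(\mathbf{L})$, which is Boolean since $\mathbb{OML}\cap\mathbb{DIST}=\mathbb{BA}$, so the Boolean De Morgan law applies. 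In the reverse direction, the PBZ$^{\ast}$ axioms transcribe mechanically via $x^{\sim}:=(\Diamond x)'$: $a\wedge a^{\sim}=0$ from M3 and M5; $a\leq a^{\sim\sim}=\Diamond a$ and $a^{\sim\sim}=a^{\sim\prime}$ both using M6 from part (1); condition $(\ast)$ from M7; and paraorthomodularity from $a\leq b$ with $a'\wedge b=0$, which forces $a'\wedge a=0$, hence $a\vee a'=1$, hence $b = b\wedge(a\vee a') = (a\wedge b)\vee(a'\wedge b) = a$ by Kleene distributivity. The two translations are mutually inverse: $((\Diamond x)')^{\sim} = (\Diamond(\Diamond x)')' = \Diamond x$ by M6, and $(x^{\sim\sim})' = x^{\sim\prime\prime} = x^{\sim}$ using $x^{\sim\sim}=x^{\sim\prime}$. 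The main obstacle is the squeeze argument in part (1); once M6 is in hand, everything else is axiom-by-axiom bookkeeping between the two signatures.
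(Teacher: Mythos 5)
Your proposal is correct and follows essentially the same route as the paper: the same translations $\Diamond x := x^{\sim\sim}$ and $x^{\sim} := (\Diamond x)'$, the same derivation of M6 from M1, M4, M5, M7 and distributivity (you squeeze $\Diamond(\Diamond x)'$ against $(\Diamond x)'$ where the paper squeezes $\Diamond a$ against $\square\Diamond a$, which is just the De Morgan dual of the same computation), and the same axiom-by-axiom verification and mutual-inverse check; you are in fact somewhat more explicit than the paper about the $\mathbb{DIST}\to$ weak \L ukasiewicz direction (M2 via sharpness of $x^{\sim}$) and about paraorthomodularity, which the paper leaves as ``not hard to show.'' The only blemish is the phrase ``instantiating M7 at $y:=\Diamond x$'' --- M7 has a single variable, so this should read $x:=\Diamond x$ --- but the displayed computation is the intended and correct one.
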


\begin{proof}
(i) Let $a\in M$. Using M1, M5, M7 and M4, we have that%
\[
0=\Diamond0=\Diamond\left(  \Diamond a\wedge\left(  \Diamond a\right)
^{\prime}\right)  =\Diamond\Diamond a\wedge\Diamond\left(  \left(  \Diamond
a\right)  ^{\prime}\right)  =\Diamond a\wedge\Diamond\left(  \left(  \Diamond
a\right)  ^{\prime}\right)  \text{.}%
\]
Thus $\left(  \Diamond a\right)  ^{\prime}\vee\square\Diamond a=1$, whence, by
M5,%
\[
\Diamond a=\Diamond a\wedge\left(  \left(  \Diamond a\right)  ^{\prime}%
\vee\square\Diamond a\right)  =\left(  \Diamond a\wedge\left(  \Diamond
a\right)  ^{\prime}\right)  \vee\left(  \Diamond a\wedge\square\Diamond
a\right)  =\Diamond a\wedge\square\Diamond a\text{.}%
\]
Consequently, $\Diamond a\leq\square\Diamond a$. The converse inequality
follows from M3.

(ii) Let $\mathbf{M}=\left(  M,\wedge,\vee,^{\prime},\Diamond^{\mathbf{M}%
},0,1\right)  $ be a weak \L ukasiewicz algebra. We define $f\left(
\mathbf{M}\right)  $ as the algebra $\left(  M,\wedge,\vee,^{\prime},^{\sim
f\left(  \mathbf{M}\right)  },0,1\right)  $, where for all $a\in M$, $a^{\sim
f\left(  \mathbf{M}\right)  }=\left(  \Diamond^{\mathbf{M}}a\right)  ^{\prime
}$. Conversely, given a distributive PBZ$^{\ast}$ --lattice $\mathbf{L}%
=\left(  L,\wedge,\vee,^{\prime},^{\sim\mathbf{L}},0,1\right)  $, we define
$g\left(  \mathbf{L}\right)  $ as the algebra $\left(  L,\wedge,\vee,^{\prime
},\Diamond^{g\left(  \mathbf{L}\right)  },0,1\right)  $, where for all $a\in
L$, $\Diamond^{g\left(  \mathbf{L}\right)  }a=a^{\sim\mathbf{L}\sim\mathbf{L}%
}$. Clearly, $f\left(  \mathbf{M}\right)  $ has a Kleene lattice reduct. If
$a\in M$, then $a\wedge a^{\sim f\left(  \mathbf{M}\right)  }=a\wedge\left(
\Diamond^{\mathbf{M}}a\right)  ^{\prime}\leq\Diamond^{\mathbf{M}}%
a\wedge\left(  \Diamond^{\mathbf{M}}a\right)  ^{\prime}=0$, by M3 and M5.
Moreover,%
\[
a^{\sim f\left(  \mathbf{M}\right)  \sim f\left(  \mathbf{M}\right)  }=\left(
\Diamond^{\mathbf{M}}\left(  \Diamond^{\mathbf{M}}a\right)  ^{\prime}\right)
^{\prime}=\Diamond^{\mathbf{M}}a\geq a\text{,}%
\]
by M3 and item (1). For the same reason, $a^{\sim f\left(  \mathbf{M}\right)
\prime}=\left(  \Diamond^{\mathbf{M}}a\right)  ^{\prime\prime}=\Diamond
^{\mathbf{M}}a=a^{\sim f\left(  \mathbf{M}\right)  \sim f\left(
\mathbf{M}\right)  }$. Finally, by M2, whenever $a\leq b$,%
\[
\Diamond^{\mathbf{M}}b=\Diamond^{\mathbf{M}}\left(  a\vee b\right)
=\Diamond^{\mathbf{M}}a\vee\Diamond^{\mathbf{M}}b\text{,}%
\]
i.e. $\Diamond^{\mathbf{M}}a\leq\Diamond^{\mathbf{M}}b$, whence $b^{\sim
f\left(  \mathbf{M}\right)  }=\left(  \Diamond^{\mathbf{M}}b\right)  ^{\prime
}\leq\left(  \Diamond^{\mathbf{M}}a\right)  ^{\prime}\leq a^{\sim f\left(
\mathbf{M}\right)  }$. In sum, $f\left(  \mathbf{M}\right)  $ is a
distributive BZ-lattice. Condition $(\ast)$ holds because of M7. Similarly, by
reverse-engineering $g\left(  \mathbf{L}\right)  $, it is not hard to show
that it is a weak \L ukasiewicz algebra. To round off the proof, observe that
for $a\in L$,%
\begin{align*}
a^{\sim f\left(  g\left(  \mathbf{L}\right)  \right)  }  &  =\left(
\Diamond^{g\left(  \mathbf{L}\right)  }a\right)  ^{\prime}=a^{\sim
\mathbf{L}\sim\mathbf{L}\prime}=a^{\sim\mathbf{L}\sim\mathbf{L}\sim\mathbf{L}%
}=a^{\sim\mathbf{L}}\text{,}\\
\Diamond^{g\left(  f\left(  \mathbf{M}\right)  \right)  }a  &  =a^{\sim
f\left(  \mathbf{M}\right)  \sim f\left(  \mathbf{M}\right)  }=\left(
\Diamond^{\mathbf{M}}\left(  \Diamond^{\mathbf{M}}a\right)  ^{\prime}\right)
^{\prime}=\Diamond^{\mathbf{M}}a\text{.}%
\end{align*}
Thus, $f$ and $g$ are mutually inverse functions.
\end{proof}

Similar term-equivalence results with subvarieties of $\mathbb{PBZL}%
^{\mathbb{\ast}}$ are obtained in \cite{Cat1} and \cite{CN} for two special
subvarieties of weak \L ukasiewicz algebras.

\begin{definition}
\begin{enumerate}
\item \cite[Definition 4.2]{Cat1} A \emph{\L ukasiewicz algebra} is a
weak\linebreak\L ukasiewicz algebra that satisfies the identity M9.

\item \cite{Mont2} A \emph{three-valued \L ukasiewicz algebra} is a
\L ukasiewicz algebra that satisfies the identity M10.
\end{enumerate}
\end{definition}

Clearly, \L ukasiewicz algebras are exactly the involutive Stone algebras
whose $\Diamond$-free reduct is a Kleene lattice. There is a burgeoning
literature on three-valued \L ukasiewicz algebras, see e.g. \cite{Abad, Mont2,
Mois}. Three-valued \L ukasiewicz algebras can be equivalently characterised
as tetravalent modal algebras satisfying M9, in which case, the Kleene
identity follows from the axioms. They are also called \emph{pre-rough
algebras} in the literature \cite{Saha}.

\begin{theorem}
\label{fattura}\cite[Theorems 4.3 and 5.7]{Cat1}

\begin{enumerate}
\item The variety of \L ukasiewicz algebras is term-equivalent to
$V_{\mathbb{BZL}}\left(  \mathbf{D}_{5}\right)  $.

\item The variety of three-valued \L ukasiewicz algebras is term-equivalent to\linebreak
$V_{\mathbb{BZL}}\left(  \mathbf{D}_{3}\right)  $.
\end{enumerate}
\end{theorem}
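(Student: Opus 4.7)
The plan is to piggyback on the term-equivalence $f, g$ between weak \L ukasiewicz algebras and $\mathbb{DIST}$ that was established in the theorem immediately preceding the statement. Since \L ukasiewicz algebras (resp.\ three-valued \L ukasiewicz algebras) are defined as weak \L ukasiewicz algebras satisfying additionally M9 (resp.\ M9 and M10), I would show that under the translations $a^{\sim f(\mathbf{M})}=(\Diamond^{\mathbf{M}}a)^{\prime}$ and $\Diamond^{g(\mathbf{L})}a=a^{\sim\mathbf{L}\sim\mathbf{L}}$ these extra identities correspond, within $\mathbb{DIST}$, to exactly the equations that cut out $V_{\mathbb{BZL}}(\mathbf{D}_{5})$ and $V_{\mathbb{BZL}}(\mathbf{D}_{3})$, respectively, among distributive PBZ$^{\ast}$-lattices. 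The work is therefore entirely syntactic and reduces to equational manipulation inside $\mathbb{DIST}$.

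For part (i), applying $f$ to M9 gives $((x\wedge y)^{\sim\sim})^{\prime}\approx (x^{\sim\sim}\wedge y^{\sim\sim})^{\prime}$. Using the PBZ$^{\ast}$-identity $a^{\sim\sim\prime}\approx a^{\sim}$ together with distributivity and De Morgan, the left-hand side rewrites to $(x\wedge y)^{\sim}$ and the right-hand side to $x^{\sim}\vee y^{\sim}$, yielding SDM. Conversely, starting from SDM in $\mathbb{DIST}$ and applying the dual $a^{\sim\prime}\approx a^{\sim\sim}$ produces M9 under the $\Diamond$-translation. Combined with Proposition~\ref{distnsaol}, which identifies $V_{\mathbb{BZL}}(\mathbf{D}_{5})$ with $\mathbb{SAOL}\cap\mathbb{DIST}$, this shows $f$ restricts to a bijective correspondence between \L ukasiewicz algebras and $V_{\mathbb{BZL}}(\mathbf{D}_{5})$.

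For part (ii), recall from \cite[Corollary 3.3]{PBZ2} (cited in the paper) that $V_{\mathbb{BZL}}(\mathbf{D}_{3})=\{\mathbf{A}\in V_{\mathbb{BZL}}(\mathbb{AOL}):\mathbf{A}\vDash\{\mathrm{SDM},\mathrm{SK}\}\}$; by part (i) we already have SDM, so it remains to show that within \L ukasiewicz algebras the identity M10 translates to SK, i.e.\ to $x\wedge\Diamond y\leq \square x\vee y$ written in the $\sim$-signature as $x\wedge y^{\sim\sim}\leq x^{\sim\prime}\vee y$. Starting from $x\wedge x^{\prime}\approx \Diamond x\wedge x^{\prime}$, substituting $y$ for $x$ and using distributivity, the Kleene inequality $y\wedge y^{\prime}\leq x\vee x^{\prime}$ and the previously established $\Diamond a\approx a^{\sim\sim}$, one can derive SK; conversely, putting $y:=x$ in SK together with paraorthomodularity and $\mathrm{SDM}$ yields M10. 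A quick sanity check is that the obvious three-element \L ukasiewicz chain is mapped by $f$ to $\mathbf{D}_{3}$ (since $\Diamond a=1$ for $a\neq 0$ forces $a^{\sim}=0$), which, together with the fact that the translated class is generated by its subdirectly irreducibles, gives the equality of varieties.

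The main obstacle is the second half of part (ii): showing that SK and M10 are interderivable modulo the remaining axioms of \L ukasiewicz algebras and DIST respectively. M10 is a slick equation that collapses $x$ and $\Diamond x$ on the complement side, whereas SK is an inclusion involving both $\square$ and $\Diamond$; bridging them cleanly will likely require exploiting paraorthomodularity, the identity $a^{\sim\sim}\approx a^{\sim\prime}$, and the sharpness of $\Diamond a$ in classical $\Diamond$-De Morgan algebras (M5, M6). Once that translation is done carefully, the two items follow, and the $f,g$ bijection of the previous theorem automatically delivers the term-equivalence of the whole varieties.
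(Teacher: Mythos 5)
The paper gives no proof of this theorem at all: it is imported verbatim from Cattaneo, Dalla Chiara and Giuntini \cite{Cat1}. Your strategy of deriving it instead as a corollary of the paper's own term-equivalence $f,g$ between weak \L ukasiewicz algebras and $\mathbb{DIST}$ is therefore necessarily a different, and pleasantly self-contained, route. Part (i) of your argument is sound: under $a^{\sim}=(\Diamond a)^{\prime}$ the identity M9 translates, via $a^{\sim\sim\prime}\approx a^{\sim}$ and De Morgan for $^{\prime}$, precisely into SDM, and conversely SDM gives $(x\wedge y)^{\sim\sim}=(x^{\sim}\vee y^{\sim})^{\sim}=x^{\sim\sim}\wedge y^{\sim\sim}$ by the BZ-identity $(a\vee b)^{\sim}\approx a^{\sim}\wedge b^{\sim}$; since $V_{\mathbb{BZL}}(\mathbf{D}_{5})=\mathbb{SDM}\cap\mathbb{DIST}$ (Proposition \ref{distnsaol}), restricting $f$ and $g$ does the job.

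Part (ii), however, contains a genuine gap, and you flag it yourself: the interderivability of M10 and SK modulo the remaining axioms is asserted, not proved. Moreover, the one concrete step you propose for the direction SK $\Rightarrow$ M10, namely ``putting $y:=x$ in SK'', yields only the triviality $x\leq\square x\vee x$; the substitution that works is $x:=y^{\prime}$, after which distributivity and $y^{\sim}\wedge y^{\sim\sim}=0$ give $y^{\prime}\wedge y^{\sim\sim}\leq y$, which is M10. For the converse you exhibit no derivation, and in fact none is needed, because the gap closes semantically. By Lemma \ref{sdirr} every subdirectly irreducible member of a subvariety of $V_{\mathbb{BZL}}(\mathbf{D}_{5})$ is an antiortholattice; on a nontrivial antiortholattice the translated M10 reads $x^{\prime}\leq x$ for all $x\neq 0$, which forces every element outside $\{0,1\}$ to be a fixpoint of $^{\prime}$ and hence, by the pseudo--Kleene condition, the algebra to be one of $\mathbf{D}_{1},\mathbf{D}_{2},\mathbf{D}_{3}$. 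Since $\mathbf{D}_{3}$ satisfies M10, the subvarieties of $V_{\mathbb{BZL}}(\mathbf{D}_{5})$ whose members satisfy the translated M10 are exactly those contained in $V_{\mathbb{BZL}}(\mathbf{D}_{3})$, which is all that the restriction of the term-equivalence requires; no element-wise bridge between M10 and SK is necessary. With that replacement your argument is complete.
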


Taking into account the remarks at the end of last section, it is evident that
$V_{\mathbb{BZL} }\left(  \mathbf{D}_{5}\right)  $ and $V_{\mathbb{BZL}
}\left(  \mathbf{D}_{3}\right)  $ have repeatedly resurfaced in many different
incarnations, with different choices of primitives or with different
axiomatisations. We collect many of the observations made thus far in the
following result.

\begin{theorem}
\label{menarini} The strict inclusions and incomparabilities depicted in the following diagram all hold:

\begin{center}\hspace*{-15pt}\begin{picture}(250,125)(0,0)
\put(100,8){\line(0,1){10}}
\put(50,120){$\Diamond $--De Morgan algebras}
\put(100,107){\line(0,1){10}}
\put(25,100){topological quasi--Boolean algebras}
\put(100,88){\line(0,1){10}}
\put(30,80){classical $\Diamond $--De Morgan algebras}
\put(35,60){monadic De Morgan algebras}
\put(100,68){\line(0,1){10}}
\put(-27,45){involutive}
\put(-34,35){Stone algebras}
\put(15,43){\line(3,1){42}}
\put(24,34){\line(3,-1){24}}
\put(100,40){weak \L ukasiewicz algebras}
\put(250,45){tetravalent}
\put(234,35){modal algebras}
\put(248,43){\line(-5,1){84}}
\put(118,26){\line(1,1){12}}
\put(120,58){\line(1,-1){12}}
\put(50,20){\L ukasiewicz algebras}
\put(25,0){three--valued \L ukasiewicz algebras}
\put(173,2){\line(5,2){77}}
\end{picture}\end{center}\end{theorem}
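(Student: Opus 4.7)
The plan is to treat the diagram as a Hasse-style display encoding two kinds of assertions: (a) each labelled inclusion holds (i.e.\ every class lower in the diagram is defined by adding axioms to the one directly above, so set-inclusion is immediate), and (b) each inclusion is strict, and the three pairwise incomparabilities at the middle tier (involutive Stone, weak \L ukasiewicz, tetravalent modal) hold as stated. Part (a) is purely syntactic unpacking of Definition \ref{firstbunch} and the subsequent definitions, so the substance of the proof is a catalogue of \emph{separating examples}. I would organise the proof as a vertical sweep from top to bottom, at each level producing a concrete algebra that lives in the upper variety but not in the lower, and at the split levels producing algebras witnessing each incomparability.

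For the strict inclusions along the main trunk, I would proceed as follows. To separate $\Diamond$-De Morgan algebras from topological quasi-Boolean algebras, take any nontrivial De Morgan algebra with $\Diamond$ interpreted as the constant $0$ map; it satisfies M1, M2 but fails M3. To separate topological quasi-Boolean algebras from classical $\Diamond$-De Morgan algebras, take any nontrivial De Morgan algebra with $\Diamond$ the identity; M3, M4 hold trivially, but M5 fails on any non-sharp element. To separate classical $\Diamond$-De Morgan algebras from monadic De Morgan algebras, use the known example cited from \cite{Saha} (or a small finite algebra computed directly) in which M6 fails. To separate monadic De Morgan algebras from each of the three middle-tier varieties, I would exhibit a monadic De Morgan algebra that is neither involutive Stone (M9 fails), nor tetravalent modal (M10 fails), nor weak \L ukasiewicz (M7 fails, or the Kleene identity fails on the non-modal reduct). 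A non-Kleene De Morgan algebra with $\Diamond$ defined by $\Diamond 0 = 0$ and $\Diamond a = 1$ otherwise will typically do the first and third.

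For the three-way incomparability in the middle, I would rely chiefly on term-equivalences already available. By Theorem \ref{fattura}(i), the \L ukasiewicz algebra corresponding to $\mathbf{D}_{5}$ is simultaneously an involutive Stone algebra and a weak \L ukasiewicz algebra, but it is \emph{not} tetravalent modal since $\mathbf{D}_{5}$ has elements strictly between $a \wedge a^{\prime}$ and $a$, so M10 fails. Conversely, the classical four-element tetravalent modal algebra $\mathbf{T}$ (with two incomparable fixed points of $^{\prime}$ and $\Diamond a = 1$ off $0$) satisfies M10 but fails either M9 or the Kleene identity on its $\Diamond$-free reduct, which separates tetravalent modal from both involutive Stone and weak \L ukasiewicz. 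To separate involutive Stone from weak \L ukasiewicz in the remaining direction, I would produce an involutive Stone algebra on a non-Kleene De Morgan lattice (for instance the four-element diamond with two incomparable fixed points, equipped with the involutive Stone operator sending nonzero elements to $1$); it satisfies M9 but its De Morgan reduct is not Kleene, so it is not a weak \L ukasiewicz algebra. The converse direction, producing a weak \L ukasiewicz algebra in which M9 fails, is handled by any distributive PBZ$^{\ast}$-lattice beyond $V_{\mathbb{BZL}}(\mathbf{D}_{5})$ translated across the term-equivalence of the preceding theorem (for instance, the antiortholattice $\mathbf{D}_{2}^{2} \oplus \mathbf{D}_{2}^{2}$ exhibited in Proposition \ref{distnsaol}).

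The bottom-tier strict inclusions are then clean. \L ukasiewicz is strictly inside both weak \L ukasiewicz and involutive Stone: it lies in weak \L ukasiewicz because M9 is a further axiom, and the $\mathbf{D}_{5}$-translate distinguishes the two by the preceding paragraph; it lies in involutive Stone because the Kleene condition is the extra axiom, and the same non-Kleene involutive Stone algebra used above separates them. Finally, three-valued \L ukasiewicz sits strictly inside both \L ukasiewicz (separated by $\mathbf{D}_{5}$ versus $\mathbf{D}_{3}$ via Theorem \ref{fattura}) and tetravalent modal (separated by adding M9 to the defining conditions, with a tetravalent modal algebra failing M9 as above). The only real obstacle, as I see it, is compiling a \emph{minimal} list of separating examples so that the same two or three small algebras discharge as many of the strictness claims as possible; once that list is fixed, each verification reduces to a short table of values.
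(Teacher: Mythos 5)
Your overall strategy --- read the inclusions off the definitions and certify strictness and incomparability by a catalogue of small separating examples --- is exactly the paper's, and most of your witnesses coincide with or are interchangeable with the ones the paper uses ($\mathbf{D}_{2}$ with $\Diamond$ constantly $0$, a chain with $\Diamond$ the identity, the $\mathbf{B}_{4}$-based tetravalent modal algebra, $\mathbf{D}_{2}^{2}\oplus\mathbf{D}_{2}^{2}$, and the $\mathbf{D}_{3}$/$\mathbf{D}_{5}$ translates via Theorem \ref{fattura}).

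There is, however, one concrete error. Your proposed witness for an involutive Stone algebra that is not a weak \L ukasiewicz algebra --- ``the four-element diamond with two incomparable fixed points, equipped with the involutive Stone operator sending nonzero elements to $1$'' --- is not an involutive Stone algebra at all: if $a$ and $b$ are the two incomparable fixpoints, then $a\wedge b=0$, so $\Diamond\left(  a\wedge b\right)  =\Diamond0=0$ while $\Diamond a\wedge\Diamond b=1\wedge1=1$, and M9 fails. (The paper makes exactly this computation for $\mathbf{B}_{4}$ in order to show that this algebra is \emph{not} involutive Stone.) The fix is the one the paper adopts: pass to the ordinal sum $\mathbf{D}_{2}\oplus\mathbf{B}_{4}\oplus\mathbf{D}_{2}$, where $0$ is meet-irreducible, so $\Diamond$ (sending nonzero elements to $1$) does distribute over meets, while the two fixpoints of the involution persist and block the Kleene identity. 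Two smaller points: your separating example for the second inclusion should explicitly require a non-sharp element (the identity $\Diamond$ on $\mathbf{D}_{2}$ satisfies M5), and the classical-versus-monadic separation should be witnessed by an explicit algebra such as the paper's $\mathbf{D}_{2}^{2}$ with $\Diamond a^{\prime}=1$ and $\Diamond x=x$ elsewhere, rather than deferred to a citation, since the statement claims all the depicted strictness assertions.
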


\begin{proof} All that remains to be proved is that the inclusions are strict and that the varieties not connected by upward chains are incomparable.

\begin{enumerate}
\item Consider the algebra $\mathbf{D}_{2}$ as a De Morgan algebra, and let
$\Diamond0=\Diamond1=0$. This algebra is a $\Diamond$-De Morgan algebra which
is not a topological quasi-Boolean algebra.

\item Consider the algebra $\mathbf{D}_{3}$ as a De Morgan algebra, and let
$\Diamond x=x$ for all $x\in D_{3}=\left\{  0,a,1\right\}  $. This algebra is
a topological quasi-Boolean algebra which is not a classical $\Diamond$-De
Morgan algebra. In fact, $\Diamond a\wedge\left(  \Diamond a\right)  ^{\prime
}=a\neq0$.

\item Consider the algebra $\mathbf{D}_{2}^{2}$ as a De Morgan algebra with
universe $\left\{  0,a,a^{\prime},1\right\}  $, and let $\Diamond x=x$ for all
$x\in\left\{  0,a,1\right\}  $, and $\Diamond a^{\prime}=1$. This algebra is a
topological quasi-Boolean algebra which is not a monadic De Morgan algebra. In
fact, $\square\Diamond a=0\neq a=\Diamond a$.

\item Let $\mathbf{B}_{4}$ be the four-element algebra on $\left\{
0,a,b,1\right\}  $ that generates De Morgan algebras, with $a=a^{\prime}$ and
$b=b^{\prime}$. Let $\Diamond0=0$ and $\Diamond x=1$ for all $x\neq0$. This is
a tetravalent modal algebra (actually, it generates this variety), hence a
monadic De Morgan algebra, but not an involutive Stone algebra. In fact,
$\Diamond\left(  a\wedge b\right)  =0\neq1=\Diamond a\wedge\Diamond b$. Having
two fixpoints for the involution, it also fails to be a weak \L ukasiewicz
algebra, hence a \L ukasiewicz algebra or a three-valued \L ukasiewicz algebra.

\item Consider the algebra $\mathbf{D}_{2}^{2}$ as a De Morgan algebra with
universe $\left\{  0,a,a^{\prime},1\right\}  $, and let $\Diamond0=0$, and
$\Diamond x=1$ for all $x\neq0$. This algebra is a monadic De Morgan algebra
which is not a tetravalent modal algebra. In fact, $\Diamond a\wedge
a^{\prime}=a^{\prime}\neq0=a\wedge a^{\prime}$.

\item Consider the ordinal sum $\mathbf{D}_{2}\oplus$ $\mathbf{B}_{4}%
\oplus\mathbf{D}_{2}$ as a De Morgan algebra with universe $\left\{
0,a,b,c,a^{\prime},1\right\}  $, with $b=b^{\prime}$ and $c=c^{\prime}$, and
let $\Diamond0=0$, and $\Diamond x=1$ for all $x\neq0$. This algebra is an
involutive Stone algebra which is not a weak \L ukasiewicz algebra (or a
\L ukasiewicz algebra) since it has two fixpoints for the involution.

\item Consider the ordinal sum $\mathbf{D}_{2}^{2}\oplus\mathbf{D}_{2}^{2}$ as
a De Morgan algebra on $\{ 0,a,b,c,$\linebreak$b^{\prime},a^{\prime},1\} $,
with $c=c^{\prime}$, and let $\Diamond0=0$, and $\Diamond x=1$ for all
$x\neq0$. This is a weak \L ukasiewicz algebra which is not an involutive
Stone algebra, for $\Diamond\left(  a\wedge b\right)  =0\neq1=\Diamond
a\wedge\Diamond b$. A fortiori, it fails to be a \L ukasiewicz algebra.

\item Finally, consider the algebra $\mathbf{D}_{4}$ as a De Morgan algebra on
$\left\{  0,a,a^{\prime},1\right\}  $, and let $\Diamond0=0$, and $\Diamond
x=1$ for all $x\neq0$. This is a \L ukasiewicz algebra, hence both an
involutive Stone algebra and a weak \L ukasiewicz algebra. However, it fails
to be a tetravalent modal algebra (hence a three-valued \L ukasiewicz
algebra), for $\Diamond a\wedge a^{\prime}=a^{\prime}\neq a=a\wedge a^{\prime
}$.
\end{enumerate}
\end{proof}

\section*{Acknowledgements}

This work was supported by the research grants \textquotedblleft Propriet\`{a}
d`ordine nella semantica algebrica delle logiche non
classiche\textquotedblright, Regione Autonoma della Sardegna, L. R. 7/2007, n.
7, 2015, CUP: F72F16002920002, and \textquotedblleft Theory and applications
of resource sensitive logics\textquotedblright, PRIN 2017, Prot. 20173WKCM5,
CUP: F74I19000720001.

The authors thank Davide Fazio for the insightful discussions on the topics of the present paper.

\end{document}